\title{Symplectic capacities from positive $S^1$-equivariant symplectic homology}
\author{Jean Gutt\footnote{Partially supported by a Belgian American Educational Foundation fellowship and by the SFB/TRR 191 ``Symplectic structures in Geometry, Algebra and Dynamics''}  and Michael Hutchings\footnote{Partially supported by NSF grant DMS-1406312 and a Simons Fellowship.}}
\date{}
\newcommand{\mc}[1]{{\mathcal #1}}
\numberwithin{equation}{section}
\newtheorem{theorem}{Theorem}[section]
\newtheorem{proposition}[theorem]{Proposition}
\newtheorem{corollary}[theorem]{Corollary}
\newtheorem{lemma}[theorem]{Lemma}
\newtheorem{lemma-definition}[theorem]{Lemma-Definition}
\newtheorem{conjecture}[theorem]{Conjecture}
\theoremstyle{definition}
\newtheorem{definition}[theorem]{Definition}
\newtheorem{remark}[theorem]{Remark}
\newtheorem{example}[theorem]{Example}
\newcommand{\floor}[1]{\left\lfloor #1 \right\rfloor}
\newcommand{\eqdef}{\;{:=}\;}
\newcommand{\C}{{\mathbb C}}
\newcommand{\Q}{{\mathbb Q}}
\newcommand{\R}{{\mathbb R}}
\newcommand{\N}{{\mathbb N}}
\newcommand{\Z}{{\mathbb Z}}
\newcommand{\op}{\operatorname}
\newcommand{\M}{\mc{M}}
\newcommand{\End}{\op{End}}
\newcommand{\tensor}{\otimes}
\newcommand{\CZ}{\op{CZ}}
\newcommand{\Per}{\mathcal{P}}
\newcommand{\Mod}{\mathcal{M}}
\newcommand{\Hs}{\mathcal{H}}
\newcommand{\Spec}{\operatorname{Spec}}
\newcommand{\m}{\operatorname{min}}
\newcommand{\Max}{\operatorname{Max}}
\newcommand{\Hstd}{\mathcal{H}_{\textrm{adm}}}
\newcommand{\bpm}{\begin{pmatrix}}
\newcommand{\epm}{\end{pmatrix}}
\renewcommand{\epsilon}{\varepsilon}
\newcommand*{\rom}[1]{\expandafter\@slowromancap\romannumeral #1@}
\begin{document}

\setcounter{tocdepth}{2}

\maketitle

\begin{abstract}
We use positive $S^1$-equivariant symplectic homology to define a sequence of symplectic capacities $c_k$ for star-shaped domains in $\R^{2n}$. These capacities are conjecturally equal to the Ekeland-Hofer capacities, but they satisfy axioms which allow them to be computed in many more examples. In particular, we give combinatorial formulas for the capacities $c_k$ of any ``convex toric domain'' or ``concave toric domain''. As an application, we determine optimal symplectic embeddings of a cube into any convex or concave toric domain. We also extend the capacities $c_k$ to functions of Liouville domains which are almost but not quite symplectic capacities.
\end{abstract}

\tableofcontents

\section{Introduction}

\subsection{Symplectic capacities}
\label{sec:introsc}

Let $(X,\omega)$ and $(X',\omega')$ be symplectic manifolds of the same dimension, possibly noncompact or with boundary. A {\bf symplectic embedding\/} of $(X,\omega)$ into $(X',\omega')$ is a smooth embedding $\varphi:X\to X'$ such that $\varphi^{\star}\omega'=\omega$. A basic problem in symplectic geometry is to determine for which $(X,\omega)$ and $(X',\omega')$ a symplectic embedding exists. This is already a highly nontrivial question when the symplectic manifolds in question are domains in $\R^{2n}=\C^n$, with the restriction of the standard symplectic form.

Some basic examples of interest are as follows: If $a_1,\ldots,a_n>0$, define the {\bf ellipsoid\/}
\begin{equation}
\label{eqn:ellipsoiddef}
E(a_1,\ldots,a_n) = \left\{z\in\C^n\;\bigg|\;\sum_{i=1}^n\frac{\pi|z_i|^2}{a_i}\le 1\right\}
\end{equation}
and the {\bf polydisk\/}
\begin{equation}
\label{eqn:polydiskdef}
P(a_1,\ldots,a_n) = \left\{z\in\C^n\;\bigg|\;\pi|z_i|^2\le a_i, \; \; \forall i=1,\ldots,n\right\}.
\end{equation}
Also, define the {\bf ball\/} $B(a)=E(a,\ldots,a)$.
In the four-dimensional case ($n=2$), one can compute when an ellipsoid can be symplectically embedded into an ellipsoid or polydisk, although the answers are complicated; see e.g.\ \cite{ms,cgfs}. The question of when a four-dimensional polydisk can be symplectically embedded into a polydisk or an ellipsoid is only partially understood; for some of the latest results see \cite{beyond}. The analogous questions in higher dimensions are more complicated, and much less is understood; see e.g.\ \cite{guth,hind-kerman}.

In general, when studying symplectic embedding problems, one often obstructs the existence of symplectic embeddings using various kinds of symplectic capacities. Definitions of the latter term vary; in this paper we define a {\bf symplectic capacity\/} to be a function $c$ which assigns to each symplectic manifold $(X,\omega)$, possibly in some restricted class, a number $c(X,\omega)\in[0,\infty]$, satisfying the following axioms\footnote{One can also consider {\bf normalized\/} symplectic capacities, which satisfy the additional properties $c(B(1))=c(Z(1))=1$, where we define the {\bf cylinder\/} $Z(a)=P(a,\infty,\ldots,\infty)$. A strong version of a conjecture of Viterbo \cite{viterbo-conj} asserts that all normalized symplectic capacities agree on compact convex domains in $\R^{2n}$. For example, the Gromov width $c_{\op{Gr}}$ is normalized; the Ekeland-Hofer capacity $c_k^{\op{EH}}$ reviewed below is normalized when $k=1$, but not normalized when $n>1$ and $k>1$ since then $c_k^{\op{EH}}(B(1)) < c_k^{\op{EH}}(Z(1))$.}:
\begin{description}
\item{(Monotonicity)} If $(X,\omega)$ and $(X',\omega')$ have the same dimension, and if there exists a symplectic embedding $(X,\omega) \to (X',\omega')$, then $c(X,\omega) \le c(X',\omega')$.
\item{(Conformality)} If $r$ is a positive real number then $c(X,r\omega) = r c(X,\omega)$.
\end{description}

\noindent
For surveys about symplectic capacities, see e.g.\ \cite{chls,ostrover}.

One can easily define symplectic capacities in terms of symplectic embeddings to or from other symplectic manifolds. For example, the {\bf Gromov width\/} $c_{\op{Gr}}(X,\omega)$ is defined to be the supremum over $a$ such that the ball $B(a)$ can be symplectically embedded into $(X,\omega)$. This trivially satisfies the Monotonicity and Conformality axioms. A related example is the ``cube capacity'' studied in \S\ref{sec:cube} below. However, symplectic capacities defined like this are difficult to compute, since they are just encodings of nontrivial symplectic embedding questions.

Other symplectic capacities can be defined using Floer theory or related machinery, and these tend to be more computable. For example, the Ekeland-Hofer capacities \cite{EH2} are a nondecreasing sequence of capacities $c_k^{EH}$, indexed by a positive integer $k$, which are defined for compact star-shaped domains\footnote{In this paper, a ``domain'' in a Euclidean space is the closure of an open set.} in $\R^{2n}$. The Ekeland-Hofer capacities are defined using calculus of variations for the symplectic action functional on the loop space of $\R^{2n}$.  Computations of these capacities are known in a few examples. To state these, if $a_1,\ldots,a_n>0$, let $(M_k(a_1,\ldots,a_n))_{k=1,2,\ldots}$ denote the sequence of positive integer multiples of $a_1,\ldots,a_n$, arranged in nondecreasing order with repetitions. We then have:
\begin{itemize}
\item \cite[Prop.\ 4]{EH2} The Ekeland-Hofer capacities of an ellipsoid are given by
\begin{equation}
\label{eqn:EHellipsoid}
c_k^{\op{EH}}(E(a_1,\ldots,a_n)) = M_k(a_1,\ldots,a_n).
\end{equation}
\item \cite[Prop.\ 5]{EH2} The Ekeland-Hofer capacities of a polydisk are given by
\begin{equation}
\label{eqn:EHpolydisk}
c_k^{\op{EH}}(P(a_1,\ldots,a_n)) = k\cdot\min(a_1,\ldots,a_n).
\end{equation}
\item
Generalizing \eqref{eqn:EHpolydisk}, it is asserted in \cite[Eq.\ (3.8)]{chls} that if $X\subset\R^{2n}$ and $X'\subset\R^{2n'}$ are compact star-shaped domains, then for the (symplectic) Cartesian product $X\times X'\subset \R^{2(n+n')}$, we have
\begin{equation}
\label{eqn:EHproduct}
c_k^{\op{EH}}(X\times X') = \min_{i+j=k}\{c_i^{\op{EH}}(X)+c_j^{\op{EH}}(X')\},
\end{equation}
where $i$ and $j$ are nonnegative integers and we interpret $c_0^{\op{EH}}=0$.
\end{itemize}

More recently, embedded contact homology was used to define the {\em ECH capacities\/} of symplectic four-manifolds \cite{qech}. ECH capacities can be computed in many examples, such as four-dimensional ``concave toric domains'' \cite{concave} and ``convex toric domains'' \cite{concaveconvex,beyond}, defined in \S\ref{sec:examples} below. ECH capacities give sharp obstructions to symplectically embedding a four-dimensional ellipsoid into an ellipsoid \cite{mcduff-hc} or polydisk \cite{pnas}, or more generally a four-dimensional concave toric domain into a convex toric domain \cite{concaveconvex}. In some other situations, such as for some cases of symplectically embedding a four-dimensional polydisk into an ellipsoid, the ECH capacities do not give sharp obstructions, and the Ekeland-Hofer capacities are better;\footnote{The best currently known obstructions to symplectically embedding a four-dimensional polydisk into an ellipsoid are obtained using more refined information from embedded contact homology going beyond capacities \cite{beyond}.} see \cite[Rmk.\ 1.8]{qech}. The most significant weakness of ECH capacities is that they are only defined in four dimensions, and there is no known analogue of embedded contact homology in higher dimensions which might be used to define capacities.

In this paper we define a new sequence of symplectic capacities for domains in $\R^{2n}$ for any $n$. The idea is to imitate the definition of ECH capacities, but using positive $S^1$-equivariant symplectic homology in place of embedded contact homology. The resulting capacities conjecturally agree with the Ekeland-Hofer capacities, but they satisfy certain axioms which allow them to be computed in many more examples.

To state the axioms, define a {\bf nice star-shaped domain\/} in $\R^{2n}$ to be a compact $2n$-dimensional submanifold $X$ of $\R^{2n}=\C^n$ with smooth boundary $Y$, such that $Y$ is transverse to the radial vector field
\[
\rho = \frac{1}{2}\sum_{i=1}^n\left(x_i\frac{\partial}{\partial x_i} + y_i\frac{\partial}{\partial y_i}\right).
\]
In this case, the $1$-form
\begin{equation}
\label{eqn:lambda0}
\lambda_0= \frac{1}{2}\sum_{i=1}^n(x_i\,dy_i - y_i\,dx_i)
\end{equation}
on $\C^n$ restricts to a contact form on $Y$. If $\gamma$ is a Reeb orbit of ${\lambda_0}|_Y$, define its {\bf symplectic action\/} by
\[
\mc{A}(\gamma) = \int_{\gamma}\lambda_0 \in (0,\infty).
\]
If we further assume that ${\lambda_0}|_{Y}$ is nondegenerate, i.e.\ each Reeb orbit of ${\lambda_0}|_Y$ is nondegenerate, then each Reeb orbit $\gamma$ has a well-defined Conley-Zehnder\footnote{In the special case $n=1$, we have $Y\simeq S^1$, and we define $CZ(\gamma)$ to be twice the number of times that $\gamma$ covers $Y$.} index $\CZ(\gamma)\in\Z$. In this situation, if $k$ is a positive integer, define
\[
\mc{A}_k^- = \min\{\mc{A}(\gamma) | \CZ(\gamma) = 2k+n-1\}  \in (0,\infty)
\]
(we will see in a moment why this is finite),
and
\[
\mc{A}_k^+ = \sup\{\mc{A}(\gamma) | \CZ(\gamma) = 2k+n-1\} \in (0,\infty].
\]

In \S\ref{sec:defineck} we will define the new symplectic capacities $c_k$ for nice star-shaped domains in $\R^{2n}$ for each positive integer $k$.

\begin{theorem}
\label{thm:starshaped}
The capacities $c_k$ for nice star-shaped domains in $\R^{2n}$ satisfy the following axioms:
\begin{description}
\item{(Conformality)} If $X$ is a nice star-shaped domain in $\R^{2n}$ and $r$ is a positive real number, then $c(rX) = r^2 c(X)$.
\item{(Increasing)} $c_1(X) \le c_2(X) \le \cdots < \infty$.
\item{(Monotonicity)} If $X$ and $X'$ are nice star-shaped domains in $\R^{2n}$, and if there exists a symplectic embedding $X\to X'$, then $c_k(X)\le c_k(X')$ for all $k$.
\item{(Reeb Orbits)} If ${\lambda_0}|_{\partial X}$ is nondegenerate, then $c_k(X)=\mc{A}(\gamma)$ for some Reeb orbit $\gamma$ of $\lambda_0|_{\partial X}$ with $\CZ(\gamma) = 2k+n-1$. In particular,
\begin{equation}
\label{eqn:reeborbits}
\mc{A}_k^-(X) \le c_k(X) \le \mc{A}_k^+(X).
\end{equation}
\end{description}
\end{theorem}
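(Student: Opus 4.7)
The plan is to define $c_k(X)$ as a spectral invariant, and then derive each axiom from a formal property of positive $S^1$-equivariant symplectic homology. Concretely, let $e_k \in SH^{S^1,+}(X)$ denote a distinguished degree-$(2k+n-1)$ class, and let $\imath^L : SH^{S^1,+,L}(X) \to SH^{S^1,+}(X)$ denote the map induced by the inclusion of the action-$L$ subcomplex. Define
\[
c_k(X) = \inf\bigl\{\, L > 0 \;\big|\; e_k \in \op{image}(\imath^L)\,\bigr\}.
\]
Each of the four axioms then reduces to an assertion about $e_k$ and this filtration.

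Conformality is essentially built into the construction: the rescaling $X \mapsto rX$ pulls $\lambda_0|_{\partial X}$ back to $r^2$ times itself, which multiplies every Reeb-orbit action by $r^2$ while preserving the chain complex, its $S^1$-structure, and the class $e_k$. For Monotonicity, a symplectic embedding $X \hookrightarrow X'$ induces, after a small Liouville adjustment, a Viterbo-type transfer map fitting into a commutative square
\[
\begin{CD}
SH^{S^1,+,L}(X') @>>> SH^{S^1,+}(X') \\
@VVV @VVV \\
SH^{S^1,+,L}(X) @>>> SH^{S^1,+}(X)
\end{CD}
\]
whose right-hand vertical arrow sends $e_k^{X'}$ to $e_k^X$. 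Any $L > c_k(X')$ --- witnessing that $e_k^{X'}$ lies in the image of the top horizontal arrow --- then transfers to a witness for $X$, so $c_k(X) \le c_k(X')$.

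For the Reeb Orbits axiom, I take $\lambda_0|_{\partial X}$ nondegenerate and use the Bourgeois--Oancea Morse--Bott chain model, whose generators are pairs $(\gamma, j)$ consisting of a good Reeb orbit $\gamma$ and a nonnegative integer $j$ encoding an $S^1$-Morse index, with homological degree a linear function of $\CZ(\gamma)$ and $j$ and with action filtration taking only the values $\mc{A}(\gamma)$. The spectral infimum $c_k(X)$ is therefore equal to $\mc{A}(\gamma)$ for some carrier $(\gamma, j)$ in degree $2k+n-1$; a further homological argument, using that $e_k$ is the $k$-fold $u$-module translate of the unit class, will pin the carrier down to $j = 0$ and hence to $\CZ(\gamma) = 2k+n-1$. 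The Increasing axiom follows from the same $u$-module structure, which takes $e_{k+1} \mapsto e_k$ and therefore turns any $L$-witness for $c_{k+1}(X)$ into one for $c_k(X)$; finiteness $c_k(X) < \infty$ is a consequence of the fact that $SH^{S^1,+}(X)$ is the colimit of $SH^{S^1,+,L}(X)$ as $L \to \infty$, so $e_k$ is automatically in the image of some $\imath^L$ with finite $L$. The general, possibly degenerate, case is handled by approximating $\lambda_0|_{\partial X}$ by nondegenerate contact forms and invoking continuity of $c_k$ under such approximation.

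The main obstacle lies in the Reeb Orbits step: the spectral-invariant argument by itself shows only that $c_k(X) = \mc{A}(\gamma)$ for some Reeb orbit $\gamma$ whose Morse--Bott lift sits in degree $2k+n-1$, and this lift could a priori be a pair $(\gamma, j)$ with $j > 0$ and correspondingly smaller $\CZ(\gamma)$. Pinning the carrier down to $j = 0$, and thereby establishing $\CZ(\gamma) = 2k+n-1$ exactly, requires careful analysis of how $e_k$ is built from the unit class by iterated $u$-action at the chain level, which is the subtlest Floer-theoretic input of the proof.
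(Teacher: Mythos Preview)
Your overall architecture matches the paper's: define $c_k$ as the smallest action level at which the degree-$(n-1+2k)$ generator appears (this is exactly the paper's Lemma~\ref{lem:nssd}(a) reformulation of its primary $\delta U^{k-1}$ definition), derive Conformality from scaling, Increasing from the $U$-map, and Monotonicity from a filtered transfer morphism. One point to tighten: that the transfer map sends $e_k^{X'}$ to $e_k^X$ is not automatic from the existence of the commutative square you drew; the paper deduces it from commutativity of the transfer map with both $U$ and the $\delta$-map into $H_*(X,\partial X)\otimes H_*(BS^1)$, together with the explicit identification of $e_k$ via $\delta U^{k-1}e_k = [X]\otimes[\op{pt}]$.

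The real divergence is in the Reeb Orbits step, and your stated mechanism is not the one that works in the paper. You propose to analyze the chain-level $u$-action on $e_k$ and argue directly that the minimal-action carrier has $j=0$. The paper instead proves a \emph{surjectivity} statement (Proposition~\ref{prop:ch}, Star-Shaped Domains~(iv)): if there is no Reeb orbit with $\CZ = n-1+2k$ and action in $(L_1,L_2]$, then $\imath_{L_2,L_1}$ is already surjective in degree $n-1+2k$. The Reeb Orbits axiom then follows by the usual spectral-invariant contradiction. The surjectivity is established not by tracking $e_k$ through the $u$-module, but by filtering the Morse--Bott complex by Reeb action, computing the associated graded homology---which in degree $n-1+2k$, once $N$ is large, is generated only by classes $u^0\otimes\widecheck{\gamma}$ with $\CZ(\gamma)=n-1+2k$---and then running a four-lemma argument (Lemma~\ref{lem:algebra}) to propagate surjectivity from the associated graded to the filtered homology. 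Your $u$-module heuristic is pointing at the right phenomenon (the reason $j>0$ generators do not carry $e_k$ at minimal action is ultimately that $u^j\otimes\widecheck{\gamma}$ pairs off with $u^{j-1}\otimes\widehat{\gamma}$ in the associated graded), but as stated it does not isolate this cancellation, and the filtration argument is what actually closes the gap.
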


\begin{remark}
The numbers $c_k$ are also discussed in \cite[\S3.2.1]{gg}, with applications to multiplicity results for simple Reeb orbits.
\end{remark}

\begin{remark}
\label{rem:extend}
We extend the capacities $c_k$ to functions of star-shaped domains which are not necessarily nice (such as polydisks) as follows: If $X$ is a star-shaped domain in $\R^{2n}$, then
\[
c_k(X) = \sup\{c_k(X')\}
\]
where the supremum is over nice star-shaped domains $X'$ in $\R^{2n}$ such that there exists a symplectic embedding $X'\to X$. It follows from Theorem~\ref{thm:starshaped} that this extended definition of $c_k$ continues to satisfy the first three axioms in Theorem~\ref{thm:starshaped}, and agrees with the previous definition when $X$ is a nice star-shaped domain.
\end{remark}

\subsection{Examples}
\label{sec:examples}

One can compute the capacities $c_k$ for many examples of star-shaped domains in $\R^{2n}$, using only the axioms in Theorem~\ref{thm:starshaped}.

To describe an important family of examples, let $\R^n_{\ge 0}$ denote the set of $x\in\R^n$ such that $x_i\ge 0$ for all $i=1,\ldots,n$. Define the moment map $\mu:\C^n\to\R^n_{\ge 0}$ by
\[
\mu(z_1,\ldots,z_n)=\pi(|z_1|^2,\ldots,|z_n|^2).
\]
If $\Omega$ is a domain in $\R^n_{\ge 0}$, define the {\bf toric domain\/}
\[
X_\Omega = \mu^{-1}(\Omega)\subset\C^n.
\]

We will study some special toric domains defined as follows. Given $\Omega\subset\R^n_{\ge 0}$, define
\[
\widehat{\Omega} = \left\{(x_1,\ldots,x_n)\in\R^n \;\big|\; (|x_1|,\ldots,|x_n|)\in\Omega\right\}.
\]

\begin{definition}
A {\bf convex toric domain\/} is a toric domain $X_\Omega$ such that $\widehat{\Omega}$ is a compact convex domain in $\R^n$.
\end{definition}

\begin{example}
If $n=2$, then $X_\Omega$ is a convex toric domain if and only if
\begin{equation}
\label{eqn:Omega2}
\Omega=\{(x_1,x_2)\mid 0\le x_1\le A,\; 0\le x_2\le g(x_1)\}
\end{equation}
where
\[
g:[0,A]\to \R_{\ge0}
\]
is a nonincreasing concave function. Some symplectic embedding problems involving these four-dimensional domains were studied in \cite{beyond}. A more general notion of ``convex toric domain'' in four dimensions, where $\Omega$ is convex but $\widehat{\Omega}$ is not required to be convex, is considered in \cite{concaveconvex}.
\end{example}

We now compute the capacities $c_k$ of a convex toric domain $X_\Omega$ in $\R^{2n}$. 
If $v\in\R^{n}_{\ge 0}$ is a vector with all components nonnegative, define\footnote{The reason for this notation is as follows. Let $\|\cdot\|_\Omega$ denote the norm on $\R^n$ whose unit ball is $\widehat{\Omega}$. 
Then in equation \eqref{eqn:dualnorm}, $\|\cdot\|_\Omega^*$ denotes the dual norm on $(\R^n)^*$, where the latter is identified with $\R^n$ using the Euclidean inner product.}
\begin{equation}
\label{eqn:dualnorm}
\|v\|_\Omega^*=\max\{\langle v,w\rangle \mid w\in\Omega\}
\end{equation}
where $\langle\cdot,\cdot\rangle$ denotes the Euclidean inner product. Let $\N$ denote the set of nonnegative integers.

\begin{theorem}
\label{thm:convex}
Suppose that $X_\Omega$ is a convex toric domain in $\R^{2n}$. Then
\begin{equation}
\label{eqn:convex}
c_k(X_\Omega) = \min\left\{\|v\|_\Omega^*\;\bigg|\; v=(v_1,\ldots,v_n)\in\N^n,\;\sum_{i=1}^nv_i=k\right\}.
\end{equation}
In fact, \eqref{eqn:convex} holds for any function $c_k$ defined on nice star-shaped domains in $\R^{2n}$ and satisfying the axioms in Theorem~\ref{thm:starshaped}, extended to general star-shaped domains as in Remark~\ref{rem:extend}.
\end{theorem}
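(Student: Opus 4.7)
The plan is to deduce the formula from the axioms of Theorem~\ref{thm:starshaped} by approximating $X_\Omega$ from inside with a nice star-shaped domain on which the Reeb Orbits axiom can be applied explicitly, and, for the reverse inequality, by embedding $X_\Omega$ into an ellipsoid and invoking monotonicity. As a first reduction, I would use continuity of $\|v\|_\Omega^*$ under Hausdorff convergence of $\Omega$ together with monotonicity (in the extended sense of Remark~\ref{rem:extend}) to reduce to $\Omega$ whose boundary is smooth, strictly convex, and meets each coordinate hyperplane of $\R^n$ orthogonally; every convex toric domain can be sandwiched between two such.

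For the lower bound the key is a Morse--Bott analysis of the Reeb flow on $\partial X_\Omega$. For $\Omega$ in the preferred class, the closed Reeb orbits on $\partial X_\Omega$ form one $T^{n-1}$-family for each nonzero $v \in \N^n$: the family lies over the unique point $p_v \in \partial \Omega$ whose outward conormal is parallel to $v$, and every orbit in the family has symplectic action $\langle v, p_v \rangle = \|v\|_\Omega^*$. A generic $C^\infty$-small perturbation induced by a perfect Morse function on each $T^{n-1}$ fiber yields a nice nondegenerate star-shaped $X_\Omega^\varepsilon \subset X_\Omega$ whose Reeb orbits include, for every $v$, a distinguished isolated orbit with $\CZ = 2(v_1 + \cdots + v_n) + n - 1$ and action $\|v\|_\Omega^* + O(\varepsilon)$. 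The Reeb Orbits axiom then places $c_k(X_\Omega^\varepsilon)$ in the finite set $\{\|v\|_\Omega^* + O(\varepsilon) : v \in \N^n,\, \sum v_i = k\}$; in particular $c_k(X_\Omega^\varepsilon) \ge \min_v \|v\|_\Omega^* - O(\varepsilon)$. Letting $\varepsilon \to 0$ and using monotonicity for $X_\Omega^\varepsilon \hookrightarrow X_\Omega$ yields $c_k(X_\Omega) \ge \min_v \|v\|_\Omega^*$.

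For the upper bound, fix $v \in \N^n$ with $\sum v_i = k$ and (first) $v_i > 0$ for all $i$, and set $a_i = \|v\|_\Omega^*/v_i$. By definition of $\|v\|_\Omega^*$, every $w \in \Omega$ satisfies $\sum w_i / a_i \le 1$, so $X_\Omega \subset E(a_1, \ldots, a_n)$; monotonicity gives $c_k(X_\Omega) \le c_k(E(a_1, \ldots, a_n))$. Applying the same Morse--Bott analysis to the ellipsoid, whose Reeb orbits are exactly iterates of the principal-axis circles, shows that the $v_j$-th iterate of the axis orbit in the direction realizing $\max_i v_i a_i = \|v\|_\Omega^*$ has $\CZ = 2k+n-1$ and action $\|v\|_\Omega^*$, so $c_k(E(a_1,\ldots,a_n)) \le \|v\|_\Omega^*$. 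Vectors with some $v_i = 0$ are handled by a limit, letting those $a_i \to \infty$ (passing through ellipsoid--polydisk hybrids in the vanishing directions). Minimizing over $v$ completes the upper bound. The hardest step will be the Morse--Bott Conley--Zehnder calculation: one must verify that each $T^{n-1}$-family contributes, after perturbation, an isolated orbit whose CZ index is exactly $2\sum v_i + n - 1$ with action controlled by $\|v\|_\Omega^* + O(\varepsilon)$. This reduces to an explicit Robbin--Salamon computation on each torus fiber, essentially the fiberwise analogue of the classical ellipsoid calculation, but it must be carried out uniformly for all $v$ with $\sum v_i = k$ and with the perturbations on different fibers chosen compatibly.
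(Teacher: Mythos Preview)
Your overall architecture matches the paper's exactly: the upper bound comes from embedding $X_\Omega$ into the ellipsoid $E(\|v\|_\Omega^*/v_1,\ldots,\|v\|_\Omega^*/v_n)$ and invoking the ellipsoid computation, while the lower bound comes from a Morse--Bott perturbation and the Reeb Orbits axiom. But the lower bound as you have written it contains a real gap in the Conley--Zehnder bookkeeping.

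You assert that after perturbation every Reeb orbit with $\CZ=2k+n-1$ arises from a torus family indexed by some $v$ with $\sum_i v_i=k$, and hence that $c_k(X_\Omega^\varepsilon)$ lies in $\{\|v\|_\Omega^*+O(\varepsilon):\sum v_i=k\}$. This is not what the Morse--Bott analysis gives. Resolving the family $\mc{P}(v)$ (which is a torus of dimension $n-1-Z(v)$, not always $n-1$; here $Z(v)$ is the number of vanishing components of $v$) produces nondegenerate orbits whose $\CZ_\tau$ ranges over an interval of length $n-1-Z(v)$, so that the full index satisfies
\[
Z(v)+2\textstyle\sum_i v_i \;\le\; \CZ(\gamma) \;\le\; n-1+2\textstyle\sum_i v_i.
\]
Consequently an orbit with $\CZ=2k+n-1$ can come from any family with $k\le\sum_i v_i\le k+\tfrac{1}{2}(n-1-Z(v))$, and the Reeb Orbits axiom only gives
\[
c_k(X_\Omega)\ \ge\ \min\Bigl\{\|v\|_\Omega^*\ :\ k\le \textstyle\sum_i v_i\le k+\tfrac{n-1-Z(v)}{2}\Bigr\}.
\]
The paper closes this by observing that $\|\cdot\|_\Omega^*$ is monotone in each coordinate (immediate from \eqref{eqn:dualnorm}), so any $v$ with $\sum_i v_i>k$ can be decreased componentwise to some $v'$ with $\sum_i v_i'=k$ and $\|v'\|_\Omega^*\le\|v\|_\Omega^*$; hence the minimum over the larger index set agrees with the minimum over $\sum_i v_i=k$. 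You need this step, and your current write-up skips it.

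Two smaller points. First, your reduction to $\partial\Omega$ meeting the coordinate hyperplanes orthogonally is not the normalization the paper uses; the paper instead arranges that the $i^{\mathrm{th}}$ component of the outward normal is \emph{positive and small} along $\{w_i=0\}$, which is what guarantees both that $\partial X_\Omega$ is smooth and that every relevant $v$ actually occurs in the modified Gauss image. Second, for the upper bound you invoke a Morse--Bott analysis on the ellipsoid $E(L/v_1,\ldots,L/v_n)$, but this ellipsoid is degenerate (the ratios are rational); it is cleaner, as the paper does, to first establish $c_k(E(a_1,\ldots,a_n))=M_k(a_1,\ldots,a_n)$ for irrational ratios via the Reeb Orbits axiom and then pass to arbitrary ratios by continuity.
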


\begin{example}
\label{ex:polydisk}
The polydisk $P(a_1,\ldots,a_n)$ is a convex toric domain $X_\Omega$, where $\Omega$ is the rectangle
\[
\Omega = \{x\in\R^n_{\ge 0} \mid x_i\le a_i,\;\forall i=1,\ldots,n\}.
\]
In this case
\[
\|v\|_\Omega^* = \sum_{i=1}^na_iv_i.
\]
It then follows from \eqref{eqn:convex} that
\[
c_k(P(a_1,\ldots,a_n)) = k\cdot\min\{a_1,\ldots,a_n\}.
\]
\end{example}

\begin{example}
\label{ex:ellipsoidconvex}
The ellipsoid $E(a_1,\ldots,a_n)$ is a convex toric domain $X_\Omega$, where $\Omega$ is the simplex
\[
\Omega=\left\{x\in\R^n_{\ge 0} \;\bigg|\; \sum_{i=1}^n\frac{x_i}{a_i} \le 1\right\}.
\]
In this case
\[
\|v\|_\Omega^* = \max_{i=1,\ldots,n}a_iv_i.
\]
Then \eqref{eqn:convex} gives
\[
c_k(E(a_1,\ldots,a_n)) = \min_{\sum_iv_i=k}\max_{i=1,\ldots,n}a_iv_i.
\]
It is a combinatorial exercise\footnote{To do the exercise, by a continuity argument we may assume that $a_i/a_j$ is irrational when $i\neq j$, so that the positive integer multiples of the numbers $a_i$ are distinct. If $v\in\N^n$ and $\sum_iv_i=k$, then the $k$ numbers $ma_i$ where $1\le i\le n$ and $1\le m\le v_i$ are distinct, which implies that the left hand side of \eqref{eqn:recoverellipsoid} is greater than or equal to the right hand side. To prove the reverse inequality, 
if $L=M_k(a_1,\ldots,a_n)$, then the numbers $v_i=\floor{L/a_i}$ satisfy $\sum_iv_i=k$ and $\max_{i=1,\ldots,n}a_iv_i=L$.} to check that
\begin{equation}
\label{eqn:recoverellipsoid}
\min_{\sum_iv_i=k}\max_{i=1,\ldots,n}a_iv_i = M_k(a_1,\ldots,a_n).
\end{equation}
We conclude that
\begin{equation}
\label{eqn:ckellipsoidintro}
c_k(E(a_1,\ldots,a_n) = M_k(a_1,\ldots,a_n).
\end{equation}
\end{example}

Comparing the above two examples with equations \eqref{eqn:EHellipsoid} and \eqref{eqn:EHpolydisk} suggests that our capacities $c_k$ may agree with the Ekeland-Hofer capacities $c_k^{\op{EH}}$:

\begin{conjecture}
\label{conj:eh}
Let $X$ be a compact star-shaped domain in $\R^{2n}$. Then
\[
c_k(X) = c_k^{\op{EH}}(X)
\]
for every positive integer $k$.
\end{conjecture}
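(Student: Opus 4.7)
My plan is to reduce the conjecture to a compatibility between two spectral invariants of the symplectic action functional on the loop space of a neighborhood of $X$: both $c_k^{\op{EH}}(X)$ and $c_k(X)$ are min-max critical values of this functional, and the goal is to show that they select the same Reeb orbit of $\lambda_0|_{\partial X}$.

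First, in the nondegenerate case, Ekeland-Hofer define $c_k^{\op{EH}}(X)$ as a Fadell-Rabinowitz min-max critical value of a regularized action functional on (a Hilbert completion of) the free loop space of $\R^{2n}$, taken over an $S^1$-equivariant cohomology class pulled back from $H^{2(k-1)}(BS^1)$. Any critical point of the functional is, after time reparametrization, a Reeb orbit of $\lambda_0|_{\partial X}$, so $c_k^{\op{EH}}(X)$ equals $\mc{A}(\gamma)$ for some Reeb orbit $\gamma$. I would next verify that the selected $\gamma$ satisfies $\CZ(\gamma) = 2k+n-1$, which should follow from a Morse-index computation relating the $S^1$-equivariant cohomological index of the min-max class to the Conley-Zehnder index at the critical point; this is essentially the calculation underlying \eqref{eqn:EHellipsoid}.

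The core step would then be to identify the Ekeland-Hofer min-max class with the specific class in positive $S^1$-equivariant symplectic homology whose spectral invariant defines $c_k(X)$. The natural bridge is the Viterbo/Bourgeois-Oancea description of positive $S^1$-equivariant symplectic homology as a chain complex generated by Reeb orbits of $\partial X$ graded by the Conley-Zehnder index, together with a parametrized Floer continuation that preserves the action filtration while interpolating between the Hamiltonian Floer setup and Ekeland-Hofer's finite-dimensional Galerkin reduction. Once equality of spectral invariants is established for nondegenerate nice star-shaped domains, the general case follows from the extension of $c_k$ in Remark~\ref{rem:extend}, together with the $C^0$-continuity of $c_k^{\op{EH}}$ under monotone perturbations of the boundary.

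The hard part, I expect, will be the matching described in the previous paragraph. Ekeland-Hofer's framework is built on a Hilbert-space variational setup with a finite-dimensional Galerkin approximation and Fadell-Rabinowitz cohomological index theory, whereas $c_k$ arises from a Floer chain complex whose differential counts pseudoholomorphic cylinders in a completed symplectic cobordism. Reconciling the two requires a comparison theorem identifying the Fadell-Rabinowitz spectral invariant with the symplectic-homology spectral invariant; once such a comparison is established at the level of filtered $S^1$-equivariant homology, both sides select the same Reeb orbit of $\partial X$ with $\CZ = 2k+n-1$, and the conjecture follows from the Reeb Orbits axiom of Theorem~\ref{thm:starshaped}.
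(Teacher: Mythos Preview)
The statement you are attempting to prove is labeled a \emph{Conjecture} in the paper, and the paper offers no proof of it. The authors present it as an open problem, supported only by the evidence that $c_k$ and $c_k^{\op{EH}}$ agree on ellipsoids and polydisks (Examples~\ref{ex:polydisk} and~\ref{ex:ellipsoidconvex}, matching \eqref{eqn:EHellipsoid} and \eqref{eqn:EHpolydisk}), and by the further remark that both satisfy the Cartesian product property~\eqref{eqn:EHproduct} on convex toric domains. There is nothing in the paper to compare your proposal against.

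As for the proposal itself: you have correctly identified the shape of what a proof would have to look like --- a filtered comparison between the Fadell--Rabinowitz min-max theory underlying $c_k^{\op{EH}}$ and the spectral invariants of positive $S^1$-equivariant symplectic homology underlying $c_k$ --- and you are candid that the ``matching'' step is the hard part. But what you have written is a strategy, not a proof. The claimed fact that the Ekeland--Hofer critical point has $\CZ(\gamma)=2k+n-1$ is not established in \cite{EH2} in that form, and the ``parametrized Floer continuation \ldots\ interpolating between the Hamiltonian Floer setup and Ekeland--Hofer's finite-dimensional Galerkin reduction'' is precisely the missing comparison theorem; asserting its existence is the whole content of the conjecture. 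Absent a concrete construction of that interpolation and a verification that it respects both the $S^1$-equivariant cohomology classes and the action filtration, the argument has a genuine gap at its center.
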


\begin{remark}
More evidence for this conjecture: Theorem~\ref{thm:convex} implies that our capacities $c_k$ satisfy the Cartesian product property \eqref{eqn:EHproduct} in the special case when $X$ and $X'$ are convex toric domains. We do not know whether the capacities $c_k$ satisfy this property in general.
\end{remark}

We can also compute the capacities $c_k$ of another family of examples:

\begin{definition}
A {\bf concave toric domain\/} is a toric domain $X_\Omega$ where $\Omega$ is compact and $\R^n_{\ge 0}\setminus\Omega$ is convex.
\end{definition}

\begin{example}
If $n=2$, then $X_\Omega$ is a concave toric domain if and only if $\Omega$ is given by \eqref{eqn:Omega2} where $g:[0,A]\to\R_{\ge 0}$ is a convex function with $g(A)=0$. Some symplectic embedding problems involving these four-dimensional domains were studied in \cite{concave}.
\end{example}

\begin{remark} A domain in $X\subset \R^{2n}$ is both a convex toric domain and a concave toric domain if and only if $X$ is an ellipsoid \eqref{eqn:ellipsoiddef}.
\end{remark}

Suppose that $X_\Omega$ is a concave toric domain. Let $\Sigma$ denote the closure of the set $\partial\Omega\cap\R^n_{>0}$. Similarly to \eqref{eqn:dualnorm}, if $v\in\R^n_{\ge 0}$, define\footnote{Unlike \eqref{eqn:dualnorm}, the function $[\cdot]_\Omega$ is not a norm; instead it satisfies the reverse inequality $[v+v']_\Omega \ge [v]_\Omega + [v']_\Omega$.}
\begin{equation}
\label{eqn:antinorm}
[v]_\Omega = \min\left\{\langle v,w\rangle \;\big|\; w\in\Sigma\right\}.
\end{equation}

\begin{theorem}
\label{thm:concave}
If $X_\Omega$ is a concave toric domain in $\R^{2n}$, then
\begin{equation}
\label{eqn:concave}
c_k(X_\Omega) = \max\left\{[v]_\Omega \;\bigg|\; v\in\N^n_{>0},\;\sum_iv_i = k+n-1\right\}.
\end{equation}
In fact, \eqref{eqn:concave} holds for any function $c_k$ defined on nice star-shaped domains in $\R^{2n}$ and satisfying the axioms in Theorem~\ref{thm:starshaped}, extended to general star-shaped domains as in Remark~\ref{rem:extend}.
\end{theorem}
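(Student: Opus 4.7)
The plan is to prove the two inequalities in \eqref{eqn:concave} separately, using only the axioms of Theorem~\ref{thm:starshaped} together with the extension to non-nice domains in Remark~\ref{rem:extend}.

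\textbf{Lower bound.} For each $v\in\N^n_{>0}$ with $\sum_iv_i = k+n-1$, I will show that the ellipsoid $E(\alpha_1,\ldots,\alpha_n)$ with $\alpha_i\eqdef [v]_\Omega/v_i$ symplectically embeds into $X_\Omega$. This reduces to checking that the closed simplex $\{x\in\R^n_{\ge 0}:\langle v,x\rangle\le [v]_\Omega\}$ lies in $\Omega$. The verification is a simple radial argument: $\Omega$ is star-shaped about the origin (an easy consequence of the convexity of $\R^n_{\ge 0}\setminus\Omega$ together with compactness of $\Omega$), so any $x\in\R^n_{\ge 0}\setminus\Omega$ satisfies $x=tw$ for some $t>1$ and $w\in\Sigma$, giving $\langle v,x\rangle = t\langle v,w\rangle\ge t[v]_\Omega>[v]_\Omega$. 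Monotonicity together with Example~\ref{ex:ellipsoidconvex} then yields $c_k(X_\Omega)\ge c_k(E(\alpha)) = M_k(\alpha_1,\ldots,\alpha_n)$. A direct count shows $M_k(\alpha)=[v]_\Omega$: the positive integer multiples $m\alpha_i = m[v]_\Omega/v_i$ satisfying $m\alpha_i\le [v]_\Omega$ are exactly those with $1\le m\le v_i$, giving $\sum_iv_i = k+n-1$ such multiples, of which $n$ equal $[v]_\Omega$; hence $M_k(\alpha) = [v]_\Omega$. Taking the maximum over $v$ completes the lower bound.

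\textbf{Upper bound.} For each $\epsilon>0$, I will construct a nice star-shaped domain $X^\epsilon\supset X_\Omega$ such that every Reeb orbit $\gamma$ on $\partial X^\epsilon$ with $\CZ(\gamma) = 2k+n-1$ satisfies $\mc{A}(\gamma)\le \max_v[v]_\Omega + O(\epsilon)$. The Reeb Orbits axiom then yields $c_k(X^\epsilon)\le \max_v[v]_\Omega + O(\epsilon)$; Monotonicity (combined with Remark~\ref{rem:extend}) gives $c_k(X_\Omega)\le c_k(X^\epsilon)$; and letting $\epsilon\to 0$ finishes the proof. To build $X^\epsilon$ I first approximate $\Omega$ from outside by a smoothly bounded strictly concave $\Omega^\epsilon$ whose outward normal to $\Sigma^\epsilon$ is a rational direction only at isolated, generic points, and then perform a Morse--Bott perturbation of $\lambda_0|_{\partial X_{\Omega^\epsilon}}$ localized near each such rational-normal point to make the contact form nondegenerate.

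\textbf{Main obstacle.} The technical heart is the classification of Reeb orbits on $\partial X^\epsilon$ and the computation of their Conley--Zehnder indices. Before perturbation the closed Reeb orbits on $\partial X_{\Omega^\epsilon}$ come in $T^{n-1}$ Morse--Bott families, indexed by $v\in\N^n_{>0}$: at the unique $w(v)\in\Sigma^\epsilon$ whose outward normal is proportional to $v$, there lies a $T^{n-1}$ family of closed Reeb orbits of common action $\langle v,w(v)\rangle = [v]_{\Omega^\epsilon}$. After the Morse--Bott perturbation each family breaks into finitely many nondegenerate orbits whose $\CZ$-indices range in a window determined by $v$ and the Hessian of $\Sigma^\epsilon$ at $w(v)$. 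What needs to be shown is that an orbit with $\CZ$-index exactly $2k+n-1$ necessarily arises from a family with $\sum_iv_i = k+n-1$, realized by an extremal choice of Morse critical point on $T^{n-1}$. The hardest piece is the Robbin--Salamon index calculation along the linearized Reeb flow over a Morse--Bott torus, combined with a Morse-theoretic analysis of the perturbing function on $T^{n-1}$; it parallels the corresponding computation needed for Theorem~\ref{thm:convex}, but with a sign change reflecting concavity rather than convexity of $\Sigma^\epsilon$.
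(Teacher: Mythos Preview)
Your approach is essentially the paper's own: the lower bound by inscribing the ellipsoid $E([v]_\Omega/v_1,\ldots,[v]_\Omega/v_n)$ and the upper bound by Morse--Bott perturbing the toric contact form and reading off actions and Conley--Zehnder indices of the resulting nondegenerate orbits are exactly the two lemmas the paper proves.

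Two small points to correct in the upper-bound sketch. First, the assertion that an orbit with $\CZ(\gamma)=2k+n-1$ \emph{necessarily} comes from a family with $\sum_i v_i = k+n-1$ is too strong. The concavity sign-flip gives $-(n-1)\le \CZ_\tau(\gamma)\le 0$, hence $1-n+2\sum_i v_i \le \CZ(\gamma)\le 2\sum_i v_i$, which only pins down $k+\tfrac{n-1}{2}\le \sum_i v_i\le k+n-1$. The reduction to the endpoint $\sum_i v_i = k+n-1$ then uses that $[v]_\Omega$ is nondecreasing in each coordinate, so the maximum over the range is attained there. Second, your classification of Reeb orbits by $v\in\N^n_{>0}$ covers only the interior of $\Sigma^\epsilon$; orbits sitting over boundary points $w$ with some $w_i=0$ live on lower-dimensional tori and must be handled separately. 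The paper disposes of them by choosing the approximation so that the outward normal at such $w$ is nearly orthogonal to the nonzero coordinate directions, which forces the Conley--Zehnder index of those orbits to be much larger than $2k+n-1$.
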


Note that in \eqref{eqn:concave}, all components of $v$ are required to be positive, while in \eqref{eqn:convex}, we only required that all components of $v$ be nonnegative.

\begin{example}
\label{ex:ellipsoidconcave}
Let us check that \eqref{eqn:concave} gives the correct answer when $X_\Omega$ is an ellipsoid $E(a_1,\ldots,a_n)$. 
Similarly to Example~\ref{ex:ellipsoidconvex}, we have
\[
[v]_\Omega = \min_{i=1,\ldots,n}a_iv_i.
\]
Thus, we need to check that
\begin{equation}
\label{eqn:ellipsoidconvex}
\max_{\sum_iv_i=k+n-1}\min_{i=1,\ldots,n}a_iv_i = M_k(a_1,\ldots,a_n)
\end{equation}
where, unlike Example~\ref{ex:ellipsoidconvex}, now all components of $v$ must be positive integers. This can be proved similarly to \eqref{eqn:recoverellipsoid}.
\end{example}

A quick application of Theorem~\ref{thm:concave}, pointed out by Schlenk \cite[Cor.\ 11.5]{schlenk}, is to compute the Gromov width of any concave toric domain\footnote{The four-dimensional case of this was shown using ECH capacities in \cite[Cor.\ 1.10]{concave}.}:

\begin{corollary}
If $X_\Omega$ is a concave toric domain in $\R^{2n}$, then
\[
c_{\op{Gr}}(X_\Omega) = \max\{a\mid B(a)\subset X_\Omega\}.
\]
\end{corollary}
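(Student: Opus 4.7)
The plan is to sandwich $c_{\op{Gr}}(X_\Omega)$ between a trivial lower bound and an upper bound coming from the capacity $c_1$, and then verify that both bounds coincide by a direct computation using Theorem~\ref{thm:concave}.

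For the lower bound, if $B(a) \subset X_\Omega$ as a subset, then inclusion is a symplectic embedding, so $c_{\op{Gr}}(X_\Omega) \ge \max\{a \mid B(a)\subset X_\Omega\}$ directly from the definition of the Gromov width. For the upper bound, first I would invoke monotonicity of $c_1$: any symplectic embedding $B(a)\to X_\Omega$ yields $c_1(B(a))\le c_1(X_\Omega)$. By Example~\ref{ex:ellipsoidconvex} (formula \eqref{eqn:ckellipsoidintro}), $c_1(B(a))=M_1(a,\ldots,a)=a$, so taking the supremum over admissible $a$ gives $c_{\op{Gr}}(X_\Omega)\le c_1(X_\Omega)$.

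Next I would compute $c_1(X_\Omega)$ explicitly. By Theorem~\ref{thm:concave},
\[
c_1(X_\Omega)=\max\left\{[v]_\Omega \;\bigg|\; v\in\N^n_{>0},\;\sum_i v_i = n\right\}.
\]
The only vector $v\in\N^n_{>0}$ with $\sum_i v_i=n$ is $v=(1,\ldots,1)$, so $c_1(X_\Omega)=[(1,\ldots,1)]_\Omega = \min\{\sum_i w_i \mid w\in\Sigma\}$ by the definition \eqref{eqn:antinorm}.

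Finally I would match this against the right-hand side of the corollary. The moment image of $B(a)$ is the simplex $\Delta_a = \{x\in\R^n_{\ge 0}\mid \sum_i x_i\le a\}$, so $B(a)\subset X_\Omega$ if and only if $\Delta_a\subset\Omega$. Using that $\R^n_{\ge 0}\setminus\Omega$ is convex and $\Sigma$ is its boundary facing the origin, the hyperplane $\{\sum_i x_i=a\}$ has all of $\R^n_{\ge 0}\cap \{\sum x_i\le a\}$ inside $\Omega$ exactly when $a\le \sum_i w_i$ for every $w\in\Sigma$; hence $\max\{a\mid B(a)\subset X_\Omega\}=\min\{\sum_i w_i\mid w\in\Sigma\}$. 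Combining everything yields the chain
\[
\max\{a\mid B(a)\subset X_\Omega\} \le c_{\op{Gr}}(X_\Omega)\le c_1(X_\Omega)=\min\left\{\textstyle\sum_i w_i \mid w\in\Sigma\right\}=\max\{a\mid B(a)\subset X_\Omega\},
\]
forcing equality. The main (very minor) obstacle is the geometric step of identifying $\min\{\sum_i w_i\mid w\in\Sigma\}$ with the largest ball inscribed in $\Omega$; this is immediate from the convexity of the complement but deserves a brief justification.
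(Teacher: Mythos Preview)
Your proof is correct and follows essentially the same approach as the paper: both establish the trivial lower bound by inclusion, then use monotonicity of $c_1$ together with Theorem~\ref{thm:concave} to compute $c_1(X_\Omega)=[(1,\ldots,1)]_\Omega=\min\{\sum_i w_i\mid w\in\Sigma\}$ and identify this with the largest inscribed ball. The paper simply asserts the final equality $\min\{\sum_i w_i\mid w\in\Sigma\}=a_{\max}$ without comment, whereas you correctly flag that this geometric step deserves (and has) a brief justification via convexity of the complement.
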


\begin{proof}
Let $a_{\op{max}}$ denote the largest real number $a$ such that $B(a)\subset X_\Omega$.
By the definition of the Gromov width $c_{\op{Gr}}$, we have $c_{\op{Gr}}(X_\Omega)\ge a_{\op{max}}$. To prove the reverse inequality $c_{\op{Gr}}(X_\Omega)\le a_{\op{max}}$, suppose that there exists a symplectic embedding $B(a)\to X_\Omega$; we need to show that $a\le a_{\op{max}}$. By equation \eqref{eqn:ckellipsoidintro}, the monotonicity property of $c_1$, and Theorem~\ref{thm:concave}, we have
\[
\begin{split}
a &= c_1(B(a))\\
&\le c_1(X_\Omega)\\
 &= [(1,\ldots,1)]_\Omega\\
&= \min\left\{\sum_{i=1}^nw_i\;\bigg|\;w\in\Sigma\right\}\\
&= a_{\op{max}}.
\end{split}
\]
\end{proof}

\subsection{Application to cube capacities}
\label{sec:cube}

We now use the above results to solve some symplectic embedding problems where the domain is a cube.

Given $\delta>0$, define the {\bf cube\/}
\[
\Box_n(\delta) = P(\delta,\ldots,\delta) \subset \C^n.
\]
Equivalently,
\[
\Box_n(\delta) = \left\{z\in\C^n \;\bigg|\; \max_{i=1,\ldots,n}\left\{\pi|z_i|^2\right\}\le \delta\right\}.
\]

\begin{definition}
Given a $2n$-dimensional symplectic manifold $(X,\omega)$, define the {\bf cube capacity\/}
\[
c_\Box(X,\omega) = \sup\left\{\delta > 0 \; \mid \; \mbox{there exists a symplectic embedding $\Box_n(\delta) \longrightarrow (X,\omega)$}\right\}.
\]
\end{definition}

It is immediate from the definition that $c_\Box$ is a symplectic capacity.

\begin{theorem}
\label{thm:cube}
Let $X_\Omega\subset\C^n$ be a convex toric domain or a concave toric domain. Then
\[
c_\Box(X_\Omega) = \max\{\delta \mid (\delta,\ldots,\delta) \in \Omega\}.
\]
\end{theorem}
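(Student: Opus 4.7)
Set $\delta_{*}:=\max\{\delta\mid(\delta,\ldots,\delta)\in\Omega\}$; the plan is to prove $c_{\Box}(X_{\Omega})=\delta_{*}$ by two separate inequalities. For the lower bound $c_{\Box}(X_{\Omega})\ge\delta_{*}$ I would first verify that $\Omega$ is downward-closed in $\R^{n}_{\ge 0}$, meaning that $x\in\Omega$ and $0\le y\le x$ componentwise imply $y\in\Omega$. In the convex case this is immediate from convexity and reflection-invariance of $\widehat{\Omega}$: all $2^{n}$ sign reflections of $x$ lie in $\widehat{\Omega}$, so their convex hull $[-x_{1},x_{1}]\times\cdots\times[-x_{n},x_{n}]$ does too, and intersecting with $\R^{n}_{\ge 0}$ puts the box $[0,x_{1}]\times\cdots\times[0,x_{n}]$ in $\Omega$. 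In the concave case, if $y\notin\Omega$ then compactness of $\Omega$ gives $y+T(x-y)\notin\Omega$ for large $T$; convexity of $\R^{n}_{\ge 0}\setminus\Omega$ then forces the entire segment, and in particular $x=y+(x-y)$, to lie outside $\Omega$, contradicting $x\in\Omega$. Once downward-closedness is in hand, $[0,\delta_{*}]^{n}\subset\Omega$, hence $\Box_{n}(\delta_{*})=X_{[0,\delta_{*}]^{n}}\subset X_{\Omega}$, giving $c_{\Box}(X_{\Omega})\ge\delta_{*}$.

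For the upper bound I would assume a symplectic embedding $\Box_{n}(\delta)\hookrightarrow X_{\Omega}$ exists and deduce $\delta\le\delta_{*}$. By Example~\ref{ex:polydisk}, $c_{k}(\Box_{n}(\delta))=k\delta$, so Monotonicity (valid on star-shaped domains by Remark~\ref{rem:extend}) yields $k\delta\le c_{k}(X_{\Omega})$ for every $k\in\N$. In the convex case the argument then proceeds as follows. Since $\Omega$ is compact and convex, pick a supporting hyperplane at the boundary point $p:=(\delta_{*},\ldots,\delta_{*})$ with outer normal $v\in\R^{n}$; downward-closedness forces $v\in\R^{n}_{\ge 0}$, because $p-\epsilon e_{j}\in\Omega$ for small $\epsilon>0$ gives $-\epsilon v_{j}\le 0$, hence $v_{j}\ge 0$. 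Thus $\|v\|_{\Omega}^{*}\le\langle v,p\rangle=\delta_{*}\sum_{i}v_{i}$. Now approximate $v$ by integer vectors $w^{(N)}\in\N^{n}$ with $\sum_{i}w^{(N)}_{i}=N$ and $w^{(N)}/N\to v/\sum_{i}v_{i}$; Theorem~\ref{thm:convex} combined with the continuity and homogeneity of $\|\cdot\|_{\Omega}^{*}$ yields
\[
c_{N}(X_{\Omega})\le\|w^{(N)}\|_{\Omega}^{*}=N\delta_{*}+o(N),
\]
and combining with $c_{N}(X_{\Omega})\ge N\delta$ and sending $N\to\infty$ forces $\delta\le\delta_{*}$.

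In the concave case the argument is cleaner: by Theorem~\ref{thm:concave} the inequality $c_{k}(X_{\Omega})\ge k\delta$ produces, for each $k$, some $v\in\N^{n}_{>0}$ with $\sum_{i}v_{i}=k+n-1$ and $[v]_{\Omega}\ge k\delta$. Since $p=(\delta_{*},\ldots,\delta_{*})$ lies in $\Sigma$, evaluating $[v]_{\Omega}=\min_{w\in\Sigma}\langle v,w\rangle$ at $w=p$ gives
\[
k\delta\le[v]_{\Omega}\le\langle v,p\rangle=(k+n-1)\delta_{*},
\]
so $\delta\le\delta_{*}(1+(n-1)/k)$; letting $k\to\infty$ yields $\delta\le\delta_{*}$. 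The main obstacle is the convex upper bound, which requires producing the outward normal at $p$ and then passing from a real supporting direction to integer vectors in $\N^{n}$ while keeping the error in $c_{N}(X_{\Omega})$ sublinear in $N$; the concave case is easy precisely because the extremal diagonal point itself lies on $\Sigma$, so no approximation is needed.
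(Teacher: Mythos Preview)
Your proof is correct, but it differs from the paper's approach in an interesting way. The paper introduces an auxiliary domain $L_n(\delta)=\{z\in\C^n\mid\min_i\pi|z_i|^2\le\delta\}$, computes $c_k(L_n(\delta))=\delta(k+n-1)$ via the concave formula (since $L_n(\delta)$ is an increasing union of concave toric domains), and deduces $c_\Box(L_n(\delta))=\delta$ by the same asymptotic argument you use. The theorem then follows from the single observation that $\Box_n(\delta_*)\subset X_\Omega\subset L_n(\delta_*)$ for both convex and concave $X_\Omega$; this sandwich is exactly your downward-closedness statement in disguise.

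The advantage of the paper's route is uniformity: the convex and concave cases are handled simultaneously, with no need for the supporting-hyperplane-plus-integer-approximation step that you carry out in the convex case. It also yields the slightly stronger conclusion (noted as a remark in the paper) that \emph{any} star-shaped $X$ with $\Box_n(\delta)\subset X\subset L_n(\delta)$ satisfies $c_\Box(X)=\delta$, not just convex or concave toric ones. Your approach, by contrast, stays entirely within the given domain $X_\Omega$ and extracts the upper bound directly from the formulas of Theorems~\ref{thm:convex} and~\ref{thm:concave}; in the concave case this is essentially the same computation (since $p\in\Sigma$ plays the role that $\Sigma_{\Omega_\delta}$ plays for $L_n(\delta)$), while in the convex case it is a genuine alternative argument.
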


That is, $c_\Box(X_\Omega)$ is the largest $\delta$ such that $\Box_n(\delta)$ is a subset of $X_\Omega$; one cannot do better than this obvious symplectic embedding by inclusion.

Since the proof of Theorem~\ref{thm:cube} is short, we will give it now. We need to consider the non-disjoint union of $n$ symplectic cylinders,
\[
L_n(\delta) = \left\{z\in\C^n \;\bigg|\; \min_{i=1,\ldots,n}\left\{\pi|z_i|^2\right\}\le \delta\right\}.
\]

\begin{lemma}
\label{lem:ckz}
$c_k(L_n(\delta)) = \delta(k+n-1)$.
\end{lemma}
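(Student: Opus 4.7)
The plan is to prove the equality in two directions. For the lower bound I will construct, for each sufficiently large parameter $T$, a compact concave toric domain $X_{\Omega_T}\subset L_n(\delta)$ with $c_k(X_{\Omega_T})=\delta(k+n-1)$. For the upper bound I will show that any nice star-shaped domain $X'$ with a symplectic embedding into $L_n(\delta)$ fits, after a slight radial rescaling, inside some $X_{\Omega_T}$, so that the capacity of $X'$ is bounded by $\delta(k+n-1)$.

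For each $T>\delta(k+n-1)$, let
\[
K_T = \op{conv}\Bigl(\{x\in\R^n_{\ge 0} : x_i\ge\delta\text{ for all }i\}\cup\bigcup_{i=1}^n\{t e_i : t\ge T\}\Bigr),
\]
the convex hull of the shifted positive orthant together with the $n$ axial rays starting at $Te_i$. This set equals the Minkowski sum of the simplex with vertices $(\delta,\ldots,\delta)$ and $Te_1,\ldots,Te_n$ with $\R^n_{\ge 0}$, so $K_T$ is a closed convex set. Define $\Omega_T\subset\R^n_{\ge 0}$ as the closure (in $\R^n_{\ge 0}$) of $\R^n_{\ge 0}\setminus K_T$. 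Any $x\in\R^n_{\ge 0}$ with $\sum_i x_i\ge T$ lies in $K_T$ (being expressible as a convex combination of the $Te_i$ plus a nonnegative vector), so $\Omega_T$ is compact; and the complement of $\Omega_T$ in $\R^n_{\ge 0}$ is convex, making $X_{\Omega_T}$ a concave toric domain. Since $K_T\supset\{x:x_i\ge\delta\,\forall i\}$, every point of $\Omega_T$ has some coordinate at most $\delta$, so $\Omega_T\subset\Omega_L$ and the inclusion $X_{\Omega_T}\hookrightarrow L_n(\delta)$ is a symplectic embedding.

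For the lower bound I apply Theorem~\ref{thm:concave} to $X_{\Omega_T}$. For each $v\in\N^n_{>0}$ with $\sum_i v_i=k+n-1$, the minimum of $\langle v,\cdot\rangle$ over the closure $\Sigma_T$ of $\partial\Omega_T\cap\R^n_{>0}$ is realized at extreme points. These consist of the vertex $(\delta,\ldots,\delta)$, giving $\langle v,(\delta,\ldots,\delta)\rangle=\delta(k+n-1)$, and the $n$ vertices $Te_i$, each giving $v_iT\ge T>\delta(k+n-1)$. On the unbounded wall faces $\{x_i=\delta,\,x_j\ge\delta\text{ for all }j\ne i\}$ inherited from $\{x:x_i\ge\delta\,\forall i\}$, the strict positivity of every $v_i$ forces the minimum of $\langle v,\cdot\rangle$ to again lie at the corner $(\delta,\ldots,\delta)$. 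Hence $[v]_{\Omega_T}=\delta(k+n-1)$ for every admissible $v$, yielding $c_k(X_{\Omega_T})=\delta(k+n-1)$. The extended definition of $c_k$ from Remark~\ref{rem:extend} then gives $c_k(L_n(\delta))\ge c_k(X_{\Omega_T})=\delta(k+n-1)$.

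For the upper bound, let $X'$ be any nice star-shaped domain and $\phi:X'\to L_n(\delta)$ a symplectic embedding. For small $\epsilon>0$, $(1-\epsilon)X'\subset\op{int}(X')$, so $\phi((1-\epsilon)X')$ is compact and contained in the open set $\phi(\op{int}(X'))\subset\op{int}(L_n(\delta))=\{z:\min_i\pi|z_i|^2<\delta\}$. Its moment image is a compact subset of $\{x\in\R^n_{\ge 0}:\min_i x_i<\delta\}$, and a direct argument (using that $\min_i x_i\le\delta-2\eta$ uniformly on the compact set for some $\eta>0$ and arguing as in the simplex-decomposition above) shows this compact set is contained in $\Omega_T$ for all $T$ sufficiently large. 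Monotonicity then gives $c_k((1-\epsilon)X')\le c_k(X_{\Omega_T})=\delta(k+n-1)$, and Conformality turns this into $(1-\epsilon)^2 c_k(X')\le\delta(k+n-1)$; letting $\epsilon\to 0$ and taking the supremum over $X'$ yields $c_k(L_n(\delta))\le\delta(k+n-1)$. The main technical step is the extremal computation of $[v]_{\Omega_T}$ on $\Sigma_T$, where the strict positivity condition $v\in\N^n_{>0}$ from Theorem~\ref{thm:concave} is essential for ruling out smaller values on the unbounded wall faces, and the choice $T>\delta(k+n-1)$ is what excludes the other simplex vertices.
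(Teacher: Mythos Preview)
Your argument is correct and follows the same exhaustion strategy that the paper uses, only made fully explicit. The paper's proof is much terser: it observes that $L_n(\delta)=X_{\Omega_\delta}$ with $\Omega_\delta=\{x\in\R^n_{\ge 0}:\min_i x_i\le\delta\}$, notes that $\Omega_\delta$ is a nested union of concave toric domains so that Theorem~\ref{thm:concave} applies by exhaustion, and then computes $[v]_{\Omega_\delta}=\delta\sum_i v_i$ directly (the surface $\Sigma$ for $\Omega_\delta$ is the boundary of the shifted orthant $\{x_i\ge\delta\}$, on which $\langle v,\cdot\rangle$ with $v\in\N^n_{>0}$ is minimized at the corner $(\delta,\ldots,\delta)$). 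Your construction of the specific exhausting domains $X_{\Omega_T}$ and the separate treatment of the two inequalities accomplishes exactly this, just with more detail.

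One inaccuracy worth fixing: your description of $\Sigma_T$ mentions ``unbounded wall faces'', but $\Omega_T$ is compact, so $\Sigma_T$ is compact and has no unbounded pieces. (Those walls are faces of the shifted orthant, but after taking the convex hull with the rays from $Te_i$ they are absorbed into the interior of $K_T$ for $n\ge 3$.) A cleaner justification of $[v]_{\Omega_T}\ge\delta(k+n-1)$ is: any $w\in\Sigma_T\subset\partial K_T\subset K_T=S+\R^n_{\ge 0}$ satisfies $w\ge s$ coordinate-wise for some $s\in S$, whence
\[
\langle v,w\rangle\;\ge\;\langle v,s\rangle\;\ge\;\min_{s'\in S}\langle v,s'\rangle\;=\;\min\bigl\{\delta(k+n-1),\,v_1T,\ldots,v_nT\bigr\}\;=\;\delta(k+n-1),
\]
the last equality using $v_i\ge 1$ and $T>\delta(k+n-1)$. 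This replaces the extreme-point discussion and avoids the erroneous face description.
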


\begin{proof}
Observe that $L_n(\delta)=X_{\Omega_\delta}$ where
\[
\Omega_\delta=\left\{x\in\R^n_{\ge0} \;\bigg|\; \min_{i=1,\ldots,n}x_i\le \delta\right\}.
\]
As such, $\Omega_\delta$ is the union of a nested sequence of concave toric domains. By an exhaustion argument, the statement of Theorem~\ref{thm:concave} is valid for $X_{\Omega_\delta}$. Similarly to Example~\ref{ex:polydisk}, we have
\[
[v]_{\Omega_\delta} = \delta\sum_{i=1}^nv_i.
\]
The lemma then follows from equation \eqref{eqn:concave}.
\end{proof}

\begin{proposition}
\label{prop:cboxz}
$c_\Box(L_n(\delta))=\delta$.
\end{proposition}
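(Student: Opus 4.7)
The plan is to establish the two inequalities separately. The lower bound $c_\Box(L_n(\delta)) \ge \delta$ is immediate from the tautological inclusion $\Box_n(\delta) \subset L_n(\delta)$, which provides a symplectic embedding by inclusion.

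For the upper bound, I would use the capacities $c_k$ as obstructions. Suppose $\delta' > 0$ is such that there exists a symplectic embedding $\Box_n(\delta') \hookrightarrow L_n(\delta)$. Applying monotonicity of $c_k$ (in the extended sense of Remark~\ref{rem:extend}, which I note applies here because $L_n(\delta)$, while noncompact, is still a star-shaped domain in $\R^{2n}$), one obtains
\[
c_k(\Box_n(\delta')) \le c_k(L_n(\delta))
\]
for every positive integer $k$. By Example~\ref{ex:polydisk} applied to the polydisk $P(\delta',\ldots,\delta')=\Box_n(\delta')$, the left-hand side equals $k\delta'$. By Lemma~\ref{lem:ckz}, the right-hand side equals $\delta(k+n-1)$. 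Thus
\[
\delta' \le \delta \cdot \frac{k+n-1}{k}
\]
for every $k \ge 1$, and letting $k \to \infty$ yields $\delta' \le \delta$. Taking the supremum over such $\delta'$ gives $c_\Box(L_n(\delta)) \le \delta$.

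There is no serious obstacle: the argument is the standard pattern of exploiting the \emph{Increasing} axiom together with asymptotically different growth rates in $k$. The one point worth checking is that monotonicity of $c_k$ is available for an embedding whose target $L_n(\delta)$ is not a nice star-shaped domain; but since any nice star-shaped subdomain of $\Box_n(\delta')$ composes with the given embedding to land in $L_n(\delta)$, the extended definition in Remark~\ref{rem:extend} immediately yields the needed inequality $c_k(\Box_n(\delta')) \le c_k(L_n(\delta))$. The key conceptual point is that the ratio $(k+n-1)/k$ of the two sides tends to $1$, so the obstruction becomes sharp in the limit.
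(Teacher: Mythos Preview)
Your proof is correct and follows essentially the same approach as the paper: the lower bound via the inclusion $\Box_n(\delta)\subset L_n(\delta)$, and the upper bound via monotonicity of $c_k$ combined with Example~\ref{ex:polydisk} and Lemma~\ref{lem:ckz}, then letting $k\to\infty$. Your extra remark justifying monotonicity for the non-nice target $L_n(\delta)$ via Remark~\ref{rem:extend} is a useful clarification that the paper leaves implicit; one small quibble is that the relevant axiom here is Monotonicity, not the Increasing axiom you mention in passing.
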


\begin{proof}
We have $\Box_n(\delta)\subset L_n(\delta)$, so by the definition of $c_\Box$, it follows that $c_\Box(L_n(\delta))\ge \delta$.

To prove the reverse inequality $c_\Box(L_n(\delta))\le \delta$, suppose that there exists a symplectic embedding $\Box_n(\delta')\to L_n(\delta)$; we need to show that $\delta'\le\delta$.
By the Monotonicity property of the capacities $c_k$, we know that
\[
c_k(\Box_n(\delta')) \le c_k(L_n(\delta))
\]
for each positive integer $k$. By Example~\ref{ex:polydisk} and Lemma~\ref{lem:ckz}, this means that
\[
k\delta' \le \delta(k+n-1).
\]
Since this holds for arbitrarily large $k$, it follows that $\delta'\le\delta$ as desired.
\end{proof}

\begin{proof}[Proof of Theorem~\ref{thm:cube}.]
Let $\delta>0$ be the largest real number such that $(\delta,\ldots,\delta)\in\Omega$. It follows from the definitions of convex and concave toric domain that
\[
\Box_n(\delta) \subset X_\Omega \subset L_n(\delta).
\]
The first inclusion implies that $\delta\le c_\Box(X_\Omega)$ by the definition of $c_\Box$, while the second inclusion implies that $c_\Box(X_\Omega)\le\delta$ by Proposition~\ref{prop:cboxz}. Thus $c_\Box(X_\Omega) = \delta$.
\end{proof}

\begin{remark}
The proof of Theorem~\ref{thm:cube} shows more generally that any star-shaped domain $X\subset \C^n$ such that
\begin{equation}
\label{eqn:sandwich}
\Box_n(\delta) \subset X \subset L_n(\delta)
\end{equation}
satisfies $c_\Box(X)=\delta$.
\end{remark}

\begin{remark}
The proof of Theorem~\ref{thm:cube} also shows that if $X\subset\C^n$ is a star-shaped domain satisfying \eqref{eqn:sandwich}, then
\begin{equation}
\label{eqn:asymptotics}
\lim_{k\to\infty}\frac{c_k(X)}{k}
=
c_\Box(X).
\end{equation}
This is related to the following question of Cieliebak-Mohnke \cite{cm}.

Given a domain $X\subset\R^{2n}$, define the {\bf Lagrangian capacity\/} $c_L(X)$ to be the supremum over $A$ such that there exists an embedded Lagrangian torus $T\subset X$ such that the symplectic area of every map $(D^2,\partial D^2)\to (X,T)$ is an integer multiple of $A$. It is asked in \cite{cm} whether if $X\subset\R^{2n}$ is a convex domain then
\begin{equation}
\label{eqn:cm}
\lim_{k\to\infty}\frac{c_k^{EH}(X)}{k} = c_L(X).
\end{equation}
It is confirmed by \cite[Cor.\ 1.3]{cm} that \eqref{eqn:cm} holds when $X$ is a ball.

Observe that if $X$ is any domain in $\C^n$, then the Lagrangian capacity is related to the cube capacity by
\[
c_{\Box}(X)\le c_L(X),
\]
because if $\Box_n(\delta)$ symplectically embeds into $X$, then the restriction of this embedding maps the ``corner''
\[
\mu^{-1}(\delta,\ldots,\delta)\subset\Box_n(\delta)
\]
to a Lagrangian torus $T$ in $X$ such that the symplectic area of every disk with boundary on $T$ is an integer multiple of $\delta$. Thus the asymptotic result \eqref{eqn:asymptotics} implies that if $X\subset\C^n$ is a domain satisfying \eqref{eqn:sandwich}, then
\[
\lim_{k\to\infty}\frac{c_k(X)}{k} \le c_L(X).
\]
Assuming Conjecture~\ref{conj:eh}, this proves one inequality in \eqref{eqn:cm} for these examples.
\end{remark}

\subsection{Liouville domains}

Recall that a {\bf Liouville domain\/} is a pair $(X,\lambda)$ where $X$ is a compact manifold with boundary, $\lambda$ is a $1$-form on $X$ such that $d\lambda$ is symplectic, and $\lambda$ restricts to a contact form on $\partial X$, compatibly with the boundary orientation. For example, if $X$ is a nice star-shaped domain in $\R^{2n}$, then $(X,{\lambda_0}|_X)$ is a Liouville domain.

In \S\ref{sec:defineck} we extend the symplectic capacities $c_k$ for nice star-shaped domains to functions of Liouville domains. These are not quite capacities, because the Monotonicity property only holds under some restrictions:

\begin{definition}
\label{def:gle}
Let $(X,\lambda)$ and $(X',\lambda')$ be Liouville domains of the same dimension. A {\bf generalized Liouville embedding\/} $(X,\lambda)\to(X',\lambda')$ is a symplectic embedding $\varphi:(X,d\lambda)\to(X',d\lambda')$ such that
\[
\big[\left(\varphi^{\star}\lambda' - \lambda\right) \big|_{\partial X}\big] = 0 \in H^1(\partial X;\R).
\]
\end{definition}

Of course, if $H^1(\partial X;\R)=0$, for example if $X$ is a nice star-shaped domain in $\R^{2n}$, then every symplectic embedding is a generalized Liouville embedding.

\begin{theorem}
\label{thm:liouville}
The functions $c_k$ of Liouville domains satisfy the following axioms:
\begin{description}
\item{(Conformality)} If $(X,\lambda)$ is a Liouville domain and $r$ is a positive real number, then $c(X,r\lambda) = r c(X,\lambda)$.
\item{(Increasing)} $c_1(X,\lambda) \le c_2(X,\lambda) \le \cdots \le \infty$.
\item{(Restricted Monotonicity)} If there exists a generalized Liouville embedding $(X,\lambda)\to(X',\lambda')$,
then $c_k(X,\lambda) \le c_k(X',\lambda')$.
\item{(Contractible Reeb Orbits)} If $c_k(X,\lambda) < \infty$, then $c_k(X,\lambda) = \mc{A}(\gamma)$ for some Reeb orbit $\gamma$ of $\lambda|_{\partial X}$ which is contractible\footnote{Here $\mc{A}(\gamma)$ denotes the symplectic action of $\gamma$, which is defined by $\mc{A}(\gamma) = \int_\gamma\lambda$.} in $X$.
\end{description}
\end{theorem}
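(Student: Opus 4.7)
The plan is to define $c_k(X,\lambda)$ as a spectral invariant of action-filtered positive $S^1$-equivariant symplectic homology, and then to derive each axiom from a corresponding structural property of the theory. Concretely, for each $L\in(0,\infty)$ one has a filtered group $SH^{S^1,+,L}_*(X,\lambda)$, computed from admissible Hamiltonians whose $1$-periodic orbits all have action below $L$, together with an inclusion-induced map $\iota^L$ into the full group $SH^{S^1,+}_*(X,\lambda)$, which carries a $\Q[u]$-module structure. A $C^2$-small admissible Hamiltonian on $X$ produces a canonical sequence of classes $\theta_1,\theta_2,\ldots\in SH^{S^1,+}_*(X,\lambda)$ related by $u\theta_{k+1}=\theta_k$, and I would set
\[
c_k(X,\lambda)=\inf\{L>0 \mid \theta_k\in\op{im}\iota^L\},
\]
with the convention that the infimum of the empty set is $+\infty$.

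Conformality and the Increasing axiom then follow essentially formally. For Conformality, the rescaling $\lambda\mapsto r\lambda$ multiplies every Reeb action by $r$, and pulling back admissible Hamiltonians produces a natural isomorphism $SH^{S^1,+,rL}_*(X,r\lambda)\cong SH^{S^1,+,L}_*(X,\lambda)$ intertwining the inclusion maps, the $\Q[u]$-action, and the canonical classes $\theta_k$; hence $c_k(X,r\lambda)=r\,c_k(X,\lambda)$. For Increasing, the chain-level representative of multiplication by $u$ in the Bourgeois--Oancea model sends a generator $(\gamma,j)$ to $(\gamma,j-1)$, preserving the underlying Reeb orbit and therefore the action. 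So any chain representing $\theta_{k+1}$ at filtration $L$ yields, by applying $u$, a chain representing $u\theta_{k+1}=\theta_k$ at filtration $L$, giving $c_k\le c_{k+1}$.

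For Restricted Monotonicity, the hypothesis $[(\varphi^{\star}\lambda'-\lambda)|_{\partial X}]=0$ is exactly what is needed to modify $\lambda$ on a collar of $\partial X$ by an exact form so that $\varphi$ becomes a Liouville embedding and $X'\setminus\varphi(\op{Int}X)$ inherits the structure of a Liouville cobordism from $(\partial X',\lambda'|_{\partial X'})$ to $(\partial X,\lambda|_{\partial X})$. Following the $S^1$-equivariant Viterbo transfer construction, this cobordism induces transfer maps $SH^{S^1,+,L}_*(X',\lambda')\to SH^{S^1,+,L}_*(X,\lambda)$ for each $L$, compatible with $\iota^{\bullet}$, the $\Q[u]$-action, and the canonical classes $\theta_k$, from which $c_k(X,\lambda)\le c_k(X',\lambda')$ is immediate. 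For Contractible Reeb Orbits, suppose $c_k(X,\lambda)<\infty$. A compactness argument applied to a sequence of admissible Hamiltonians $H_i$ with slopes tending to infinity and action caps decreasing to $c_k(X,\lambda)$ produces a Reeb orbit $\gamma$ of $\lambda|_{\partial X}$ whose action equals $c_k(X,\lambda)$; because the generators of the $S^1$-equivariant positive chain complex arise from $1$-periodic orbits of the radial Hamiltonian $H_i$, and these are contractible in $X$ by construction, the orbit $\gamma$ is contractible in $X$.

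The main obstacle is the careful setup of the $S^1$-equivariant Viterbo transfer map in the generalized Liouville setting, together with verification that it respects the action filtration, the $\Q[u]$-module structure, and the canonical classes $\theta_k$. The non-equivariant version is classical work of Viterbo and Cieliebak--Oancea, and the $S^1$-equivariant refinement is by now well understood; nevertheless, checking that the exact modification of $\lambda$ on the cobordism region does not perturb Reeb dynamics on $\partial X$ or $\partial X'$, and that the resulting transfer map sends $\theta_k$ to $\theta_k$ at each filtration level, constitutes the main technical input behind the theorem.
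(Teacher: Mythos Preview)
Your overall strategy---defining $c_k$ as a spectral invariant of action-filtered positive $S^1$-equivariant symplectic homology and reading off the axioms from structural properties (scaling, $U$-action, transfer maps)---matches the paper's approach. However, two points need correction.

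First, your definition via ``canonical classes $\theta_k\in SH^{S^1,+}(X,\lambda)$ produced by a $C^2$-small Hamiltonian'' is not well-posed. A $C^2$-small Hamiltonian has only constant $1$-periodic orbits, and these are precisely the generators quotiented out when passing from $SH^{S^1}$ to $SH^{S^1,+}$; so such a Hamiltonian does not directly produce classes in the positive theory. The paper's formulation avoids this by using the connecting map $\delta:SH^{S^1,+}(X,\lambda)\to H_*(X,\partial X)\otimes H_*(BS^1)$ from the long exact sequence, and defining $c_k$ as the infimum of $L$ for which some $\alpha\in CH^L(X,\lambda)$ satisfies $\delta U^{k-1}\imath_L\alpha=[X]\otimes[\mathrm{pt}]$. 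This is not the same as asking for a distinguished $\theta_k$ to lie in the image of $\imath_L$: such a $\theta_k$ need not exist (whence $c_k=\infty$) and, even when the condition is satisfiable, need not be unique. For Restricted Monotonicity you then also need that the transfer map commutes with $\delta$, not just with $U$ and the filtration; this is an additional ingredient the paper checks.

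Second, and more seriously, your argument for the Contractible Reeb Orbits axiom is incorrect. The generators of the positive $S^1$-equivariant chain complex for an admissible Hamiltonian correspond to Reeb orbits on $\partial X$, and these are \emph{not} contractible in $X$ by construction---they can lie in any free homotopy class. The mechanism that forces contractibility is different: the complex splits as $CH(X,\lambda)=\bigoplus_\Gamma CH(X,\lambda,\Gamma)$ over free homotopy classes $\Gamma$, and the map $\delta$ vanishes on every summand with $\Gamma\neq 0$ (since the target $H_*(X,\partial X)\otimes H_*(BS^1)$ comes entirely from constant loops). Hence any $\alpha$ witnessing $\delta U^{k-1}\imath_L\alpha=[X]\otimes[\mathrm{pt}]$ can be taken in the contractible sector, and the Reeb Orbits property of the action filtration then pins $c_k$ to the action of a contractible Reeb orbit. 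Without invoking $\delta$ (or an equivalent device singling out the trivial free homotopy class), your compactness argument would only produce \emph{some} Reeb orbit of the right action, not necessarily a contractible one.
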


\begin{remark}
Monotonicity does not extend from generalized Liouville embeddings to arbitrary symplectic embeddings: in some cases there exists a symplectic embedding $(X,d\lambda) \to (X',d\lambda')$ even though $c_k(X,\lambda) > c_k(X',\lambda')$. For example, suppose that $T\subset X'$ is a Lagrangian torus. Let $\lambda_T$ denote the standard Liouville form on the cotangent bundle $T^*T$. By the Weinstein Lagrangian tubular neighborhood theorem, there is a symplectic embedding $(X,d\lambda)\to(X',d\lambda')$, where $X\subset T^*T$ is the unit disk bundle for some flat metric on $T$, and $\lambda = \lambda_T|_X$. Then $(X,\lambda)$ is a Liouville domain. But $\lambda|_{\partial X}$ has no Reeb orbits which are contractible in $X$, so by the Contractible Reeb Orbits axiom, $c_k(X,\lambda) = \infty$ for all $k$.

Note that the symplectic embedding $(X,d\lambda)\to (X',d\lambda')$ is a generalized Liouville embedding if and only if $T$ is an {\bf exact\/} Lagrangian torus in $(X',\lambda')$, that is $\lambda'|_T$ is exact. The Restricted Monotonicity axiom then tells us that {\em if $(X',\lambda')$ is a Liouville domain with $c_1(X',\lambda')<\infty$, then $(X',\lambda')$ does not contain any exact Lagrangian torus\/}.
\end{remark}

\begin{remark}
The functions $c_k$ are defined for disconnected Liouville domains. However, it follows from the definition in \S\ref{sec:defineck} that
\[
c_k\left(\coprod_{i=1}^m(X_i,\lambda_i)\right) = \max_{i=1,\ldots,m}c_k(X_i,\lambda_i).
\]
As a result, Restricted Monotonicity for embeddings of disconnected Liouville domains does not tell us anything more than it already does for their connected components.
\end{remark}

\begin{remark}
One can ask whether, by analogy with ECH capacities \cite[Prop.\ 1.5]{qech}, the existence of a generalized Liouville embedding $\coprod_{i=1}^m(X_i,\lambda_i)\to (X',\lambda')$ implies that
\begin{equation}
\label{eqn:disju}
\sum_{i=1}^mc_{k_i}(X_i,\lambda_i) \le c_{k_1+\cdots+k_m}(X',\lambda')
\end{equation}
for all positive integers $k_1,\cdots,k_m$. We have heuristic reasons to expect this when the $k_i$ are all multiples of $n-1$. However it is false more generally.

For example, in $2n$ dimensions, the Traynor trick \cite{traynor} can be used to symplectically embed the disjoint union of $n^2$ copies of the ball $B(1/2-\varepsilon)$ into the ball $B(1)$, for any $\varepsilon>0$. If \eqref{eqn:disju} is true with all $k_i=1$, then by \eqref{eqn:ckellipsoidintro} we obtain
\[
n^2(1/2-\varepsilon) \le n.
\]
But this is false when $n>2$ and $\varepsilon>0$ is small enough.
\end{remark}

\paragraph{Acknowledgments.} The first author thanks Mike Usher and Daniel Krashen for helpful discussions. The second author thanks Felix Schlenk for helpful discussions.

\paragraph{The rest of the paper.} In \S\ref{sec:computations} we prove Theorems~\ref{thm:convex} and \ref{thm:concave}, computing the capacities $c_k$ for convex and concave toric domains, using only the axioms in Theorem~\ref{thm:starshaped}. In \S\ref{sec:shproperties} we state the properties of positive $S^1$-equivariant symplectic homology and transfer morphisms that are needed to define the capacities $c_k$. In \S\ref{sec:defineck} we define the capacities $c_k$ and prove that they satisfy the axioms in Theorems~\ref{thm:starshaped} and \ref{thm:liouville}. In \S\ref{sec:equivariantsh} we review the definition of positive $S^1$-equivariant symplectic homology. In \S\ref{sec:proofs1}, we prove the properties of positive $S^1$-equivariant SH that are stated in \S\ref{sec:shproperties}. In \S\ref{section:transfer} we review the construction of transfer morphisms on positive $S^1$-equivariant SH. Finally, in \S\ref{sec:proofs2} we prove the properties of transfer morphisms that are stated in \S\ref{sec:shproperties}.

\section{Computations of the capacities $c_k$}
\label{sec:computations}

We now prove Theorems~\ref{thm:convex} and \ref{thm:concave}, computing the capacities $c_k$ for convex and concave toric domains, using only the axioms in Theorem~\ref{thm:starshaped}.

\subsection{Computation for an ellipsoid}\label{sec:ellipsoid}

To prepare for the proofs of Theorems~\ref{thm:convex} and \ref{thm:concave}, we first compute the capacities $c_k$ for an ellipsoid (without using either of these theorems as in Example~\ref{ex:ellipsoidconvex} or \ref{ex:ellipsoidconcave}).

\begin{lemma}
\label{lem:ellipsoid}
The capacities $c_k$ of an ellipsoid are given by
\begin{equation}
\label{eqn:ellipsoid1}
c_k(E(a_1,\ldots,a_n)) = M_k(a_1,\ldots,a_n).
\end{equation}
\end{lemma}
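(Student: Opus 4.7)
The plan is to invoke the Reeb Orbits axiom from Theorem~\ref{thm:starshaped} directly on an ellipsoid whose contact boundary is nondegenerate, and to extend to the general case by a sandwich-type continuity argument.

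First I would reduce to the case where $a_1,\ldots,a_n$ are rationally independent (i.e.\ $a_i/a_j\notin\Q$ for all $i\neq j$). For this, the Monotonicity axiom applied to the inclusions
\[
E(a_1-\epsilon,\ldots,a_n-\epsilon)\;\subset\;E(a_1,\ldots,a_n)\;\subset\;E(a_1+\epsilon,\ldots,a_n+\epsilon)
\]
implies that $c_k(E(a_1,\ldots,a_n))$ is sandwiched between the values at nearby rationally independent parameters. Since the function $M_k$ is jointly continuous in $(a_1,\ldots,a_n)$ and rationally independent tuples are dense, it will suffice to prove \eqref{eqn:ellipsoid1} under this hypothesis.

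Under rational independence, $\lambda_0|_{\partial E}$ is nondegenerate, and the Reeb orbits are precisely the iterates $\gamma_i^m$ ($1\le i\le n$, $m\ge 1$) of the $n$ simple orbits lying in the complex coordinate axes, with action $\mc{A}(\gamma_i^m)=ma_i$ and, by a standard computation,
\[
\CZ(\gamma_i^m)=n-1+2\sum_{j=1}^{n}\left\lfloor\frac{ma_i}{a_j}\right\rfloor.
\]
Setting $L=ma_i$, rational independence forces $L/a_j\notin\Z$ for $j\neq i$, and therefore $\sum_{j}\lfloor L/a_j\rfloor$ equals $1$ plus the number of pairs $(j,p)$ with $p\ge 1$ and $pa_j<ma_i$. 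Consequently $\CZ(\gamma_i^m)=2k+n-1$ if and only if $ma_i$ is the $k$-th smallest element of the multiset $\{pa_j : j=1,\ldots,n,\ p\ge 1\}$, that is, $ma_i=M_k(a_1,\ldots,a_n)$. Such an orbit exists for every $k$, and every Reeb orbit realizing the index $2k+n-1$ has action exactly $M_k(a_1,\ldots,a_n)$.

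The Reeb Orbits axiom then closes the argument: it forces $c_k(E(a_1,\ldots,a_n))$ to be the action of some Reeb orbit with $\CZ=2k+n-1$, and the previous paragraph shows this action must equal $M_k(a_1,\ldots,a_n)$. The general (possibly degenerate) case then follows from the reduction of Step~1. Nothing in this argument is genuinely hard; the only point that requires care is the bookkeeping in the Conley--Zehnder computation and the use of rational independence to rule out coincidences of action values, which is precisely what pins down a unique candidate orbit for each $k$.
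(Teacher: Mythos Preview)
Your argument is correct and follows essentially the same route as the paper: reduce to rationally independent $a_i$ by a monotonicity/continuity sandwich, compute the Reeb orbits $\gamma_i^m$ with actions $ma_i$ and Conley--Zehnder indices $n-1+2\sum_j\lfloor ma_i/a_j\rfloor$, observe that this index equals $2k+n-1$ precisely when $ma_i=M_k(a_1,\ldots,a_n)$, and then apply the Reeb Orbits axiom. The only cosmetic difference is that the paper phrases the reduction as a general continuity lemma (invoking Conformality as well as Monotonicity), whereas your explicit sandwich with $E(a_1\pm\epsilon,\ldots,a_n\pm\epsilon)$ accomplishes the same thing.
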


\begin{proof}
By a continuity argument using the Monotonicity and Conformality axioms, cf.\ \cite[\S2.2]{concave}, to prove \eqref{eqn:ellipsoid1} we may assume that $a_i/a_j$ is irrational when $i\neq j$. In this case we can compute the capacities $c_k$ of the ellipsoid using the Reeb Orbits axiom in Theorem~\ref{thm:starshaped}. For this purpose, we now review how to compute the Reeb orbits on the boundary of the ellipsoid, their actions, and their Conley-Zehnder indices.

The Reeb vector field on the boundary of the ellipsoid $E(a_1,\ldots,a_n)$ is given by
\begin{equation}
\label{eqn:Reebellipsoid}
R = 2\pi\sum_{i=1}^n\frac{1}{a_i}\frac{\partial}{\partial\theta_i}
\end{equation}
where $\theta_i$ denotes the angular polar coordinate on the $i^{th}$ summand in $\C^n$. Since $a_i/a_j$ is irrational when $i\neq j$, it follows from \eqref{eqn:Reebellipsoid} that there are exactly $n$ simple Reeb orbits $\gamma_1,\ldots,\gamma_n$, where $\gamma_i$ denotes the circle where $z_j=0$ for $j\neq i$. We will also see below that $\lambda_0|_{\partial E(a_1,\ldots,a_n)}$ is nondegenerate.

It follows from \eqref{eqn:Reebellipsoid} that the actions of the simple Reeb orbits are given by
$
\mc{A}(\gamma_i) = a_i.
$
If $m$ is a positive integer, let $\gamma_i^m$ denote the $m^{th}$ iterate of $\gamma_i$; then this orbit has symplectic action
\begin{equation}
\label{eqn:agm}
\mc{A}(\gamma_i^m) = ma_i.
\end{equation}
Let $S$ denote the set of all such symplectic actions, i.e.\ the set of real numbers $ma_i$ where $m$ is a positive integer and $i\in\{1,\ldots,n\}.$ These are all distinct, by our assumption that $a_i/a_j$ is irrational when $i\neq j$.

We now compute the Conley-Zehnder indices of the Reeb orbits $\gamma_i^m$. Assume for the moment that $n>1$.
Recall that the Conley-Zehnder index of a contractible nondegenerate Reeb orbit $\gamma$ in a contact manifold $(Y,\lambda)$ with $c_1(\xi)|_{\pi_2(Y)}=0$ can be computed by the formula
\begin{equation}
\label{eqn:CZgeneral}
\CZ(\gamma) = \CZ_\tau(\gamma) + 2c_1(\gamma,\tau).
\end{equation}
Here $\tau$ is any (homotopy class of) symplectic trivialization of the restriction of the contact structure $\xi=\op{Ker}(\lambda)$ to $\gamma$; $\CZ_\tau(\gamma)$ denotes the Conley-Zehnder index of the path of symplectic matrices obtained by the linearized Reeb flow along $\gamma$ with respect to the trivialization $\tau$; and $c_1(\gamma,\tau)$ denotes the relative first Chern class with respect to $\tau$ of the pullback of $\xi$ to a disk $u$ bounded by $\gamma$, see \cite[\S3.2]{bn}.

In the present case where $Y=\partial E(a_1,\ldots,a_n)$, the contact structure $\xi$ on $\gamma_i^m$ is the sum of all of the $\C$ summands in $T\C^n=\C^n$ except for the $i^{th}$ summand. Let us use this identification to define the trivialization $\tau$. By \eqref{eqn:Reebellipsoid}, the linearized Reeb flow around $\gamma_i^m$ is the direct sum of rotation by angle $2\pi m a_i/a_j$ in the $j^{th}$ summand for each $j\neq i$. It follows that
\[
\CZ_\tau(\gamma_i^m) = \sum_{j\neq i}\left(2\floor{\frac{ma_i}{a_j}} + 1\right).
\]
On the other hand,
\begin{equation}
\label{eqn:c1ellipsoid}
c_1(\gamma_i^m,\tau) = m,
\end{equation}
essentially because the Hopf fibration over $S^2$ has Euler number $1$. Putting this all together, we obtain
\[
\begin{split}
\CZ(\gamma_i^m) & = 2m + \sum_{j\neq i}\left(2\floor{\frac{ma_i}{a_j}} + 1\right)\\
&= n-1 + 2\sum_{j=1}^n\floor{\frac{ma_i}{a_j}}.
\end{split}
\]
Thus
\begin{equation}
\label{eqn:czgm}
\CZ(\gamma_i^m)
= n-1 + 2\left|\left\{L\in S \mid L\le ma_i\right\}\right|.
\end{equation}
It follows from \eqref{eqn:agm} and \eqref{eqn:czgm} that
\begin{equation}
\label{eqn:mist}
\CZ(\gamma_i^m) = n-1+2k
\Longleftrightarrow 
\mc{A}(\gamma_i^m) = M_k(a_1,\ldots,a_n).
\end{equation}
Note that this also holds when $n=1$, by our convention in \S\ref{sec:introsc}.

In conclusion, it follows from \eqref{eqn:mist} that for each positive integer $k$ we have
\[
\mc{A}_k^-(E(a_1,\ldots,a_n)) = M_k(a_1,\ldots,a_n) =  \mc{A}_k^+(E(a_1,\ldots,a_n)).
\]
The lemma now follows from the Reeb Orbits axiom \eqref{eqn:reeborbits}.
\end{proof}

\begin{remark}
A useful equivalent version of \eqref{eqn:ellipsoid1} is
\begin{equation}
\label{eqn:ellipsoid2}
c_k(E(a_1,\ldots,a_n)) = \min\left\{ L \;\bigg| \; \sum_{i=1}^n\floor{\frac{L}{a_i}}\ge k\right\}.
\end{equation}
\end{remark}

\begin{remark}
\label{rem:exhaustion}
Lemma~\ref{lem:ellipsoid}, in the form \eqref{eqn:ellipsoid2}, extends to the case where some of the numbers $a_i$ are infinite, by an exhaustion argument.
\end{remark}

\subsection{Computation for convex toric domains}

We now prove Theorem~\ref{thm:convex}. We first prove that the left hand side of \eqref{eqn:convex} is less than or equal to the right hand side:

\begin{lemma}
\label{lem:convexeasy}
If $X_\Omega$ is a convex toric domain in $\R^{2n}$ then
\[
c_k(X_\Omega) \le \min\left\{\|v\|_\Omega^*\;\bigg|\; v=(v_1,\ldots,v_n)\in\N^n,\;\sum_{i=1}^nv_i=k\right\}.
\]
\end{lemma}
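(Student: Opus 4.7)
My plan is to exhibit, for each admissible $v = (v_1,\ldots,v_n) \in \N^n$ with $\sum_i v_i = k$, a symplectic inclusion $X_\Omega \subset E$, where $E$ is an ellipsoid (possibly with some infinite axes) whose $k^{\textrm{th}}$ capacity equals $L := \|v\|_\Omega^*$. Then the Monotonicity axiom combined with Lemma~\ref{lem:ellipsoid} immediately gives $c_k(X_\Omega) \le L$, and minimizing over $v$ yields the lemma.

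To build the inclusion, observe that by the definition \eqref{eqn:dualnorm}, every $w \in \Omega$ satisfies $\sum_i v_i w_i \le L$. Set $a_i = L/v_i$ when $v_i > 0$, and $a_i = +\infty$ otherwise. For any $z \in X_\Omega$, the point $\mu(z) \in \Omega$, so $\sum_i v_i \pi|z_i|^2 \le L$, which rearranges to $\sum_i \pi|z_i|^2/a_i \le 1$ (with the convention $1/\infty = 0$). Hence $X_\Omega \subset E(a_1,\ldots,a_n)$.

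Applying the Monotonicity axiom of Theorem~\ref{thm:starshaped} (together with Remark~\ref{rem:extend} to make sense of $c_k$ on the ellipsoid when some $a_i = \infty$) yields $c_k(X_\Omega) \le c_k(E(a_1,\ldots,a_n))$. By Lemma~\ref{lem:ellipsoid} in the equivalent form \eqref{eqn:ellipsoid2}, extended via Remark~\ref{rem:exhaustion} to handle infinite axes,
\[
c_k(E(a_1,\ldots,a_n)) = \min\left\{L' \;\bigg|\; \sum_i \floor{L'/a_i} \ge k\right\}.
\]
Evaluating the right-hand side at $L' = L$ gives $\sum_i \floor{L/a_i} = \sum_{i:v_i>0} v_i = k$, where I crucially use that each $v_i$ is a nonnegative integer so that $\floor{v_i} = v_i$. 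Thus $c_k(E(a_1,\ldots,a_n)) \le L$, completing the proof.

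The only subtle point is handling the case where some $v_i$ vanish, since then $E(a_1,\ldots,a_n)$ is non-compact and the ellipsoid formula must be invoked in its extended form; Remarks~\ref{rem:extend} and \ref{rem:exhaustion} dispatch this without difficulty. Everything else is a short direct computation from the axioms.
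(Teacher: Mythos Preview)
Your proof is correct and follows essentially the same approach as the paper's: fix $v$ with $\sum_i v_i = k$, embed $X_\Omega$ in the ellipsoid $E(L/v_1,\ldots,L/v_n)$ (with infinite axes where $v_i=0$), apply Monotonicity, and evaluate the ellipsoid formula \eqref{eqn:ellipsoid2} at $L'=L$ using that the $v_i$ are integers. The paper phrases the inclusion via the half-space region $\Omega'=\{x\mid\langle v,x\rangle\le L\}$ with $X_{\Omega'}=E(L/v_1,\ldots,L/v_n)$, but this is the same object; both arguments handle vanishing $v_i$ by the exhaustion in Remark~\ref{rem:exhaustion}.
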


\begin{proof}
Let $v=(v_1,\ldots,v_n)\in\N^n$ with
\begin{equation}
\label{eqn:sumvi}
\sum_{i=1}^nv_i=k;
\end{equation}
we need to show that $c_k(X_\Omega) \le \|v\|_\Omega^*$. Write $L=\|v\|_\Omega^*$. By the definition \eqref{eqn:dualnorm} of $\|\cdot\|_\Omega^*$, we have $L=\langle v,w\rangle$, where $w\in\Omega$ such that $\langle v,w\rangle$ is maximal. Define
\begin{equation}
\label{eqn:Omega'}
\Omega'= \left\{x\in\R^n_{\ge0} \;\big|\; \langle v,x\rangle \le L \right\}.
\end{equation}
Then by maximality of $\langle v,w\rangle$ we have $\Omega\subset\Omega'$. By the Monotonicity axiom for the capacity $c_k$, it follows that
\[
c_k(X_\Omega) \le c_k(X_{\Omega'}).
\]
Thus it suffices to show that $c_k(X_{\Omega'})\le L$.

To do so, suppose first that $v_i>0$ for all $i=1,\ldots,n$. Then $X_{\Omega'}$ is an ellipsoid,
\[
X_{\Omega'} = E\left(\frac{L}{v_1},\ldots,\frac{L}{v_n}\right).
\]
By equation \eqref{eqn:ellipsoid2}, we have
\begin{equation}
\label{eqn:ckxo'}
c_k(X_{\Omega'}) = \min\left\{L'\;\bigg|\; \sum_{i=1}^n\floor{\frac{L'}{L/v_i}} \ge k\right\}.
\end{equation}
Since the $v_i$ are integers, by equation \eqref{eqn:sumvi} we have
\[
L'=L \;\Longrightarrow\; \sum_{i=1}^n\floor{\frac{L'}{L/v_i}} = k.
\]
It follows from this and \eqref{eqn:ckxo'} that $c_k(X_{\Omega'})\le L$ as desired (in fact this is an equality).

The above calculation extends to the case where some of the components $v_i$ are zero by Remark~\ref{rem:exhaustion}.
\end{proof}

We now use a different argument to prove the reverse inequality which completes the proof of Theorem~\ref{thm:convex}:

\begin{lemma}
\label{lem:convexhard}
If $X_\Omega$ is a convex toric domain in $\R^{2n}$ then
\begin{equation}
\label{eqn:convexhard}
c_k(X_\Omega) \ge \min\left\{\|v\|_\Omega^*\;\bigg|\; v=(v_1,\ldots,v_n)\in\N^n,\;\sum_{i=1}^nv_i=k\right\}.
\end{equation}
\end{lemma}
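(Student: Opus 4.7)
My plan is to approximate $X_\Omega$ from inside by nondegenerate nice star-shaped domains, apply the Reeb Orbits axiom from Theorem~\ref{thm:starshaped} to each approximation, and take a limit. This generalizes the approach used for the ellipsoid in Lemma~\ref{lem:ellipsoid}.

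First I would fix $\varepsilon > 0$ and choose a compact convex domain $\Omega_\varepsilon \subset \Omega$ whose boundary is smooth and strictly convex and meets the coordinate hyperplanes cleanly, chosen moreover so that $\|v\|_{\Omega_\varepsilon}^* \ge \|v\|_\Omega^* - \varepsilon$ for every $v \in \N^n$ with $\sum_i v_i = k$ (possible since there are only finitely many such $v$). Because of the $T^n$-symmetry the standard contact form on $\partial X_{\Omega_\varepsilon}$ is degenerate, so I would further perform a small Morse--Bott-type perturbation of the contact form to obtain a nearby nondegenerate one, making $X_{\Omega_\varepsilon}$ nice in the sense of Theorem~\ref{thm:starshaped}.

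The closed Reeb orbits on the unperturbed $\partial X_{\Omega_\varepsilon}$ come in Morse--Bott families indexed by $v \in \N^n \setminus \{0\}$: at the unique point $w(v) \in \partial\Omega_\varepsilon$ with outward normal proportional to $v$, the torus fiber $\mu^{-1}(w(v)) \cong T^n$ is an $(n-1)$-dimensional family of closed orbits of common action $\langle v, w(v)\rangle = \|v\|_{\Omega_\varepsilon}^*$. A standard Morse--Bott Conley--Zehnder calculation---in which the linearized Reeb return map is the identity on the contact distribution, and the relative first Chern class over a capping disk in $\C^n$ equals $\sum_i v_i$, generalizing \eqref{eqn:c1ellipsoid}---shows that the nondegenerate orbits produced by the perturbation from the $v$-family have Conley--Zehnder indices of the form $2\sum_i v_i + \sigma$ for some $\sigma \in \{0,1,\dots,n-1\}$ determined by the Morse index of the critical point on the leaf space $T^{n-1}$. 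Hence any Reeb orbit on the perturbed $\partial X_{\Omega_\varepsilon}$ with Conley--Zehnder index $2k+n-1$ corresponds to some $v \in \N^n$ with $\sum_i v_i \ge k$ and has action approximately $\|v\|_{\Omega_\varepsilon}^*$.

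To finish I use monotonicity of the dual norm under the coordinatewise order: because $\Omega_\varepsilon \subset \R^n_{\ge 0}$, if $v' \in \N^n$ has $\sum_i v'_i \ge k$ then there is $v \in \N^n$ with $v \le v'$ and $\sum_i v_i = k$, whence $\|v\|_{\Omega_\varepsilon}^* \le \|v'\|_{\Omega_\varepsilon}^*$. Combined with the previous paragraph, every Reeb orbit $\gamma$ with $\CZ(\gamma) = 2k+n-1$ satisfies
\[
\mc{A}(\gamma) \;\ge\; \min\left\{\|v\|_{\Omega_\varepsilon}^* \;\Big|\; v \in \N^n,\;\sum_i v_i = k\right\} \;\ge\; m - \varepsilon,
\]
where $m$ denotes the right-hand side of \eqref{eqn:convexhard}. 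The Reeb Orbits axiom then gives $c_k(X_{\Omega_\varepsilon}) \ge m - \varepsilon$. Since the inclusion $X_{\Omega_\varepsilon} \hookrightarrow X_\Omega$ is a symplectic embedding, the Monotonicity axiom (combined with Remark~\ref{rem:extend} when $X_\Omega$ is not nice) gives $c_k(X_\Omega) \ge m - \varepsilon$, and letting $\varepsilon \to 0$ completes the proof. The main obstacle is the Conley--Zehnder computation sketched in the previous paragraph, which requires handling the Morse--Bott perturbation carefully and understanding the CZ contributions from both the identity return map and the capping-disk first Chern class.
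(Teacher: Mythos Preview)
Your approach is essentially the same as the paper's: perturb $X_\Omega$ to a nice nondegenerate star-shaped domain, identify the Reeb orbit families and their Conley--Zehnder indices, apply the Reeb Orbits axiom, and conclude via monotonicity. The paper carries this out in four steps and arrives at exactly your conclusion, including the reduction (your ``monotonicity of the dual norm'') from $\sum_i v_i \ge k$ down to $\sum_i v_i = k$.

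There is, however, one substantive oversimplification in your sketch that the paper handles with care. You assert that every Morse--Bott family lives on a full torus $\mu^{-1}(w(v)) \cong T^n$ with leaf space $T^{n-1}$. This is only correct when all components of $w(v)$ are positive. When some $w_i = 0$ (equivalently, when $v$ has zero components), the fiber $\mu^{-1}(w)$ is a torus of dimension $n - Z(v)$, where $Z(v)$ is the number of vanishing components, and the Morse--Bott family is only $(n - Z(v) - 1)$-dimensional. In these cases $\CZ_\tau$ is not merely a Morse index on $T^{n-1}$: it also picks up a contribution of $+1$ from each transverse $\C_i$ factor with $w_i = 0$, because the linearized Reeb flow there is a small positive rotation. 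The paper arranges this by imposing an additional condition on the perturbation (its condition (iii)): at boundary strata where $w_i = 0$, the $i$th component of the outward normal is required to be \emph{positive and small with respect to $k$}. This is what makes the rotation angle in $\C_i$ small and hence the CZ contribution exactly $+1$; without it the upper bound $\CZ_\tau \le n-1$ that you need can fail. The paper also introduces a modified Gauss map $\widetilde{G}$ (zeroing out components where $w_i = 0$) to correctly index these lower-dimensional families. With this in place one gets $Z(v) \le \CZ_\tau(\gamma) \le n-1$, and hence the desired implication $\CZ(\gamma) = 2k + n - 1 \Rightarrow \sum_i v_i \ge k$; your claimed range $\sigma \in \{0,\ldots,n-1\}$ is then correct, but for reasons more delicate than a Morse index on $T^{n-1}$.
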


\begin{proof}
If $n=1$, then the result follows immediately from Lemma~\ref{lem:ellipsoid}; thus we may assume that $n>1$.

To start, we perturb $\Omega$ to have some additional properties that will be useful.
It follows from the Conformality and Monotonicity axioms that the left hand side of \eqref{eqn:convexhard} is continuous with respect to the Hausdorff metric on compact sets $\Omega$, as in \cite[Lem.\ 2.3]{concave}. The right hand side is also continuous with respect to the Hausdorff metric as in \cite[Lem.\ 2.4]{concave}. As a result, we may assume the following, where $\Sigma$ denotes the closure of the set $\partial\Omega\cap\R^n_{>0}$:
\begin{description}
\item{(i)}
$\Sigma$ is a smooth hypersurface in $\R^n$.
\item{(ii)}
The Gauss map $G:\Sigma\to S^{n-1}$ is a smooth embedding, and $\partial X_\Omega$ is a smooth hypersurface in $\R^{2n}$. In particular, $X_\Omega$ is a nice star-shaped domain.
\item{(iii)}
If $w\in \Sigma$ and if $w_i=0$ for some $i$, then the $i^{th}$ component of $G(w)$ is positive and small with respect to $k$.
\end{description}
We now prove \eqref{eqn:convexhard} in four steps.

{\em Step 1.\/} 
We first compute the Reeb vector field on $\partial X_\Omega=\mu^{-1}(\Sigma)$.

Let $w\in\Sigma$ and let $z\in\mu^{-1}(w)$. Also, write $G(w) = (\nu_1,\ldots,\nu_n)$. Observe that
\[
\sum_i\nu_iw_i=\|G(w)\|_\Omega^*.
\]

We now define local coordinates on a neighborhood of $z$ in $\C^n$ as follows. For $i=1,\ldots,n$, let $\C_i$ denote the $i^{th}$ summand in $\C^n$. If $z_i=0$, then we use the standard coordinates $x_i$ and $y_i$ on $\C_i$. If $z_i\neq 0$, then on $\C_i$ we use local coordinates $\mu_i$ and $\theta_i$, where $\mu_i=\pi(x_i^2+y_i^2)$, and $\theta_i$ is the angular polar coordinate.

In these coordinates, the standard Liouville form \eqref{eqn:lambda0} is given by
\[
\lambda_0 = \frac{1}{2}\sum_{w_i=0}\left(x_i\,dy_i - y_i\,dx_i\right) + \frac{1}{2\pi}\sum_{w_i\neq 0}\mu_i\,d\theta_i.
\]
Also, the tangent space to $\partial X_\Omega$ at $z$ is described by
\[
T_z\partial X_\Omega = \bigoplus_{w_i=0}\C_i \oplus\left\{\sum_{w_i\neq 0}\left(a_i\frac{\partial}{\partial\mu_i} + b_i\frac{\partial}{\partial\theta_i}\right) \;\bigg|\; \sum_{w_i\neq 0}\nu_ia_i=0\right\}.
\]
It follows from the above three equations that the Reeb vector field at $z$ is given by
\begin{equation}
\label{eqn:rvf}
R = \frac{2\pi}{\|G(w)\|_\Omega^*}\sum_{w_i\neq 0}\nu_i\frac{\partial}{\partial \theta_i}.
\end{equation}
For future reference, we also note that the contact structure $\xi$ at $z$ is given by
\begin{equation}
\label{eqn:xiz}
\xi_z = \bigoplus_{w_i=0}\C_i \oplus\left\{\sum_{w_i\neq 0}\left(a_i\frac{\partial}{\partial\mu_i} + b_i\frac{\partial}{\partial\theta_i}\right) \;\bigg|\; \sum_{w_i\neq 0}\nu_ia_i=0,\;\sum_{w_i\neq 0}w_ib_i=0\right\}.
\end{equation}

{\em Step 2.\/} We now compute the Reeb orbits and their basic properties.

It is convenient here to define a (discontinuous) modification $\widetilde{G}:\Sigma\to \R^n$ of the Gauss map $G$ by setting a component of the output to zero whenever the corresponding component of the input is zero. That is, for $i=1,\ldots,n$ we define
\begin{equation}
\label{eqn:gtilde}
\widetilde{G}(w)_i = \left\{\begin{array}{cl}  G(w)_i, & w_i\neq 0,\\
0, & w_i=0.
\end{array}\right.
\end{equation}

Continuing the discussion from Step 1, observe from \eqref{eqn:rvf} that the Reeb vector field $R$ is tangent to $\mu^{-1}(w)$. Let $Z(w)$ denote the number of components of $w$ that are equal to zero; then $\mu^{-1}(w)$ is a torus of dimension $n-Z(w)$. It follows from \eqref{eqn:rvf} that if $\widetilde{G}(w)$ is a scalar multiple of an integer vector, then $\mu^{-1}(w)$ is foliated by an $(n-Z(w)-1)$-dimensional Morse-Bott family of Reeb orbits; otherwise $\mu^{-1}(w)$ contains no Reeb orbits.

Let $V$ denote the set of nonnegative integer vectors $v$ such that $v$ is a scalar multiple of an element $\tilde{v}$ of the image of the modified Gauss map $\widetilde{G}$. Given $v\in V$, let $d(v)$ denote the greatest common divisor of the components of $v$. Let $\mc{P}(v)$ denote the set of $d(v)$-fold covers of simple Reeb orbits in the torus $\mu^{-1}\left(\widetilde{G}^{-1}\left(\tilde{v}\right)\right)$. Then it follows from the above discussion that the set of Reeb orbits on $\partial X_\Omega$ equals $\sqcup_{v\in V}\mc{P}(v)$. Moreover, condition (iii) above implies that $v\in V$ whenever $\sum_iv_i\le k$.

Equation \eqref{eqn:rvf} implies that each Reeb orbit $\gamma\in\mc{P}(v)$ has symplectic action
\[
\mc{A}(\gamma) = \|v\|_{\Omega}^*.
\]

Also, we can define a trivialization $\tau$ of $\xi|_\gamma$ from \eqref{eqn:xiz}, identifying $\xi_z$ for each $z\in\gamma$ with 
a codimension two subspace of $\R^{2n}$ with coordinates $x_i,y_i$ for each $i$ with $w_i=0$, and coordinates $a_i,b_i$ for each $i$ with $w_i\neq 0$.  Then, similarly to \eqref{eqn:c1ellipsoid}, we have
\begin{equation}
\label{eqn:c1convex}
c_1(\gamma,\tau) = \sum_{i=1}^nv_i.
\end{equation}

{\em Step 3.\/} We now approximate the convex toric domain $X_\Omega$ by a nice star-shaped domain $X'$ such that $\lambda_0|_{\partial X'}$ is nondegenerate.

Given $v\in V$ with $d(v)=1$, one can perturb $\partial X_\Omega$ in a neighborhood of the $n-Z(v)$ dimensional torus swept out by the Reeb orbits in $\mc{P}(v)$, using a Morse function $f$ on the $n-Z(v)-1$ dimensional torus $\mc{P}(v)$, to resolve the Morse-Bott family $\mc{P}(v)$ into a finite set of nondegenerate Reeb orbits corresponding to the critical points of $f$ (possibly together with some additional Reeb orbits of much larger symplectic action). Owing to the strict convexity of $\Sigma$, each such nondegenerate Reeb orbit $\gamma$ will have Conley-Zehnder index with respect to the above trivialization $\tau$ in the range
\begin{equation}
\label{eqn:CZrange}
Z(v) \le \CZ_\tau(\gamma) \le n-1.
\end{equation}
It then follows from \eqref{eqn:c1convex} that
\begin{equation}
\label{eqn:CZperturb}
Z(v) + 2\sum_{i=1}^nv_i \le \CZ(\gamma) \le  n-1+ 2\sum_{i=1}^nv_i.
\end{equation}
In particular,
\begin{equation}
\label{eqn:inpa1}
\CZ(\gamma) = 2k + n - 1
\Longrightarrow
k \le \sum_{i=1}^nv_i \le k + \frac{n-1-Z(v)}{2}.
\end{equation}
Moreover, even if we drop the assumption that $d(v)=1$, then after perturbing the orbits in $\mc{P}(v/d(v))$ as above, the family $\mc{P}(v)$ will still be replaced by nondegenerate orbits each satisfying \eqref{eqn:CZperturb} (possibly together with additional Reeb orbits of much larger symplectic action), as long as $d(v)$ is not too large with respect to the perturbation.

Now choose $\epsilon>0$ small and choose
\[
R>\max\left\{\|v\|_\Omega^*\;\bigg|\; v\in\N^n,\; \sum_iv_i \le k+\frac{n-1}{2}\right\}.
\]
We can then perturb $X_\Omega$ to a nice star-shaped domain $X'$ with $\lambda_0|_{\partial X'}$ nondegenerate such that for each $v\in V$ with $\|v\|_\Omega^*<R$, the Morse-Bott family $\mc{P}(v)$ is perturbed as above; each nondegenerate orbit $\gamma$ arising from each such $\mc{P}(v)$ has symplectic action satisfying
\begin{equation}
\label{eqn:inpa2}
\mc{A}(\gamma) \ge \|v\|_\Omega^*-\epsilon;
\end{equation}
and there are no other Reeb orbits of symplectic action less than $R$.

{\em Step 4.\/} We now put together the above inequalities to complete the proof.

It follows from \eqref{eqn:inpa1} and \eqref{eqn:inpa2} that
\[
\mc{A}_k^-(X') \ge \min\left\{\|v\|_\Omega^*-\epsilon 
\;\bigg|\; v\in\N^n,\;k \le \sum_{i=1}^nv_i \le k + \frac{n-1-Z(v)}{2} \right\}.
\]
Thus by the Reeb Orbits axiom \eqref{eqn:reeborbits}, we have
\[
c_k(X') \ge \min\left\{\|v\|_\Omega^*-\epsilon 
\;\bigg|\; v\in\N^n,\;k \le \sum_{i=1}^nv_i \le k + \frac{n-1-Z(v)}{2} \right\}.
\]
Taking $\epsilon\to0$ and a sequence of perturbations $X'$ converging in $C^0$ to $X_\Omega$, and using Conformality and Monotonicity as at the beginning of the proof of this lemma, we obtain
\[
c_k(X_\Omega) \ge \min\left\{\|v\|_\Omega^* 
\;\bigg|\; v\in\N^n,\;k \le \sum_{i=1}^nv_i \le k + \frac{n-1-Z(v)}{2} \right\}.
\]

In fact, in the above minimum, we can restrict attention to $v$ with $\sum_iv_i=k$, because if $\sum_iv_i>k$, then we can decrease some components of $v$ to obtain a new vector $v'\in\N^n$ with $\sum_iv_i'=k$, and by equation \eqref{eqn:dualnorm} we will have $\|v'\|_\Omega^*\le \|v\|_\Omega^*$. This completes the proof of \eqref{eqn:convexhard}.
\end{proof}

\subsection{Computation for concave toric domains}

We now prove Theorem~\ref{thm:concave}. The proof is very similar to the above proof of Theorem~\ref{thm:convex}, but with the direction of some inequalities switched, and other slight changes.

\begin{lemma}
If $X_\Omega$ is a concave toric domain in $\R^{2n}$, then
\[
c_k(X_\Omega) \ge \max\left\{[v]_\Omega \;\bigg|\; v\in\N^n_{>0},\;\sum_iv_i = k+n-1\right\}.
\]
\end{lemma}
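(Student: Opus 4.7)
The plan is to exhibit, for each $v \in \N^n_{>0}$ with $\sum_i v_i = k+n-1$, an inclusion of an ellipsoid $E(L/v_1,\ldots,L/v_n)$ into $X_\Omega$, where $L = [v]_\Omega$, and then invoke Lemma~\ref{lem:ellipsoid} together with the Monotonicity axiom. Taking the maximum over such $v$ then yields the claim.

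Fix $v$ as above and set $L = [v]_\Omega$. Consider the simplex
$$\Omega'' = \{x \in \R^n_{\ge 0} \mid \langle v, x\rangle \le L\};$$
then $X_{\Omega''}$ is precisely the ellipsoid $E(L/v_1,\ldots,L/v_n)$. The key geometric step is to show $\Omega'' \subseteq \Omega$. First observe that since $\Omega$ is compact and $\R^n_{\ge 0}\setminus \Omega$ is convex, we have $0 \in \Omega$ (otherwise the segment from $0$ to a sufficiently large multiple of any $p \in \Omega$ would lie in the convex complement, contradicting $p \in \Omega$), and $\Omega$ is star-shaped with respect to $0$ (otherwise the intersection of a ray from $0$ with $\R^n_{\ge 0}\setminus\Omega$ would fail to be a convex subset of the ray).

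To show $\Omega'' \subseteq \Omega$, take first $w \in \R^n_{\ge 0}\setminus\Omega$ with all coordinates strictly positive. By star-shapedness and convexity of the complement, the ray $\{sw\}_{s\ge 0}$ enters $\R^n_{\ge 0}\setminus\Omega$ at some $s_0 \in (0,1]$ with $s_0 w \in \partial\Omega \cap \R^n_{>0}\subseteq \Sigma$; hence $\langle v, s_0 w\rangle \ge L$ by the definition \eqref{eqn:antinorm} of $[v]_\Omega$, which gives $\langle v, w\rangle \ge L/s_0 \ge L$. For general $w \in \R^n_{\ge 0}\setminus\Omega$ having some zero coordinates, we use that $\R^n_{\ge 0}\setminus \Omega$ is open in $\R^n_{\ge 0}$ (as $\Omega$ is closed), so $w + \epsilon(1,\ldots,1)$ lies in this open set for small $\epsilon > 0$; applying the previous case and letting $\epsilon \to 0$ yields $\langle v, w\rangle \ge L$ here as well. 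Thus $\{x \in \R^n_{\ge 0} \mid \langle v, x\rangle < L\}\subseteq \Omega$, and since $\Omega$ is closed and $L > 0$, taking closure gives $\Omega'' \subseteq \Omega$.

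The inclusion $X_{\Omega''} \subseteq X_\Omega$ combined with the Monotonicity axiom then gives $c_k(X_\Omega) \ge c_k(E(L/v_1,\ldots,L/v_n))$. Using the form \eqref{eqn:ellipsoid2} of Lemma~\ref{lem:ellipsoid}, this ellipsoid's $c_k$ equals $L$: taking $L' = L$ yields $\sum_i \floor{L' v_i/L} = \sum_i v_i = k+n-1 \ge k$, while any $L' < L$ gives $\floor{L' v_i/L} \le v_i - 1$ and sum at most $k - 1 < k$. Thus $c_k(X_\Omega) \ge [v]_\Omega$ for every admissible $v$, which is precisely the desired inequality. The main obstacle is the geometric assertion $\Omega'' \subseteq \Omega$, which is where the concavity of $X_\Omega$ enters essentially; once that is in hand, everything else is a direct application of the ellipsoid formula already established in Lemma~\ref{lem:ellipsoid}.
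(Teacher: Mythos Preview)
Your argument is correct and follows essentially the same route as the paper: embed the ellipsoid $E(L/v_1,\ldots,L/v_n)=X_{\Omega'}$ with $\Omega'=\{x\in\R^n_{\ge 0}\mid \langle v,x\rangle\le L\}$ into $X_\Omega$, then invoke Monotonicity and the ellipsoid formula \eqref{eqn:ellipsoid2} to get $c_k(X_\Omega)\ge L$. The only difference is that where the paper writes the single sentence ``by minimality of $\langle v,w\rangle$ we have $\Omega'\subset\Omega$'', you supply a detailed verification of that inclusion using star-shapedness and convexity of the complement; your added detail is correct and makes explicit what the paper leaves to the reader.
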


\begin{proof}
Let $v\in\N^n_{>0}$ with
\begin{equation}
\label{eqn:kn1}
\sum_iv_i=k+n-1;
\end{equation}
we need to show that $c_k(X_\Omega) \ge [v]_\Omega$.

Write $L=[v]_\Omega$. By the definition \eqref{eqn:antinorm}, we have
$L=\langle v,w\rangle$, where $w\in\Sigma$ is such that $\langle v,w\rangle$ is  minimal. If we define $\Omega'$ as in \eqref{eqn:Omega'}, then by minimality of $\langle v,w\rangle$ we have $\Omega'\subset\Omega$. By monotonicity of the capacity $c_k$, we then have
\[
c_k(X_{\Omega'})\le c_k(X_{\Omega}).
\]
So it suffices to show that $c_k(X_{\Omega'})\ge L$. We again have equation \eqref{eqn:ckxo'}, namely
\[
c_k(X_{\Omega'}) = \min\left\{L'\;\bigg|\; \sum_{i=1}^n\floor{\frac{L'}{L/v_i}} \ge k\right\}.
\]
Since the $v_i$ are integers, by equation \eqref{eqn:kn1} we have
\[
L'<L \Longrightarrow \sum_{i=1}^n\floor{\frac{L'}{L/v_i}} \le k-1.
\]
It follows that $c_k(X_{\Omega'})\ge L$ as desired (in fact this is an equality).
\end{proof}

\begin{lemma}
If $X_\Omega$ is a concave toric domain in $\R^{2n}$, then
\begin{equation}
\label{eqn:concavehard}
c_k(X_\Omega) \le \max\left\{[v]_\Omega \;\bigg|\; v\in\N^n_{>0},\;\sum_iv_i = k+n-1\right\}.
\end{equation}
\end{lemma}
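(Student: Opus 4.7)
The plan is to mirror the proof of Lemma~\ref{lem:convexhard}, reversing the direction of the main inequalities throughout to reflect the concave geometry of $\Omega$.

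I would begin with the same continuity and perturbation reduction as in Lemma~\ref{lem:convexhard}: Conformality and Monotonicity, combined with continuity of both sides of \eqref{eqn:concavehard} under the Hausdorff metric on $\Omega$ (using the superadditivity $[v+v']_\Omega \ge [v]_\Omega + [v']_\Omega$ for the right-hand side), let us assume that $\Sigma$ is a smooth hypersurface, the Gauss map is a smooth embedding, and $X_\Omega$ is a nice star-shaped domain. Steps~1 and~2 of Lemma~\ref{lem:convexhard} then carry over essentially verbatim, with $[v]_\Omega$ replacing $\|v\|_\Omega^*$ throughout: the Reeb vector field on $\partial X_\Omega$ above $w \in \Sigma$ is
\[
R = \frac{2\pi}{[G(w)]_\Omega}\sum_{w_i \neq 0}\nu_i\frac{\partial}{\partial\theta_i},
\]
Reeb orbits come in Morse--Bott families $\mc{P}(v)$ with symplectic action $[v]_\Omega$, and one has $c_1(\gamma,\tau) = \sum_i v_i$ relative to the natural trivialization $\tau$ of \eqref{eqn:xiz}.

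The decisive new input is the Conley--Zehnder bound in the concave setting. Because $\Sigma$ is now concave---equivalently, $\R^n_{\ge 0} \setminus \Omega$ is strictly convex---the sign of the quadratic form controlling the Morse--Bott perturbation is reversed compared with the convex case, so each nondegenerate orbit $\gamma$ produced by perturbing a family $\mc{P}(v)$ should satisfy
\[
2\sum_{i=1}^n v_i - (n-1) \le \CZ(\gamma) \le 2\sum_{i=1}^n v_i - Z(v),
\]
the reverse of \eqref{eqn:CZperturb}. Combined with $\CZ(\gamma) = 2k+n-1$ this forces $\sum_i v_i \le k+n-1$. As in Step~3 of Lemma~\ref{lem:convexhard}, I would approximate $X_\Omega$ by a nice nondegenerate star-shaped domain $X'$ in which every orbit of action below a chosen large $R$ arises from such a perturbation and has action within $\epsilon$ of $[v]_\Omega$.

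Finally, the Reeb Orbits axiom gives $c_k(X') \le \mc{A}_k^+(X')$, and I would pad each contributing $v$ by some $u \in \N^n$ with $u_i \ge 1$ whenever $v_i = 0$ to produce $v' := v + u \in \N^n_{>0}$ with $\sum_i v_i' = k+n-1$; superadditivity then gives $[v]_\Omega \le [v']_\Omega$, which is bounded above by the right-hand side of \eqref{eqn:concavehard}. Letting $\epsilon \to 0$ and $X' \to X_\Omega$ in $C^0$ completes the proof. The main obstacle I anticipate is establishing the concave Conley--Zehnder range displayed above and, in tandem, controlling the orbits coming from families $\mc{P}(v)$ with $Z(v) > 0$---these live in the axial strata of $\partial X_\Omega$ and must be handled carefully so that enough slack $k+n-1-\sum_i v_i \ge Z(v)$ remains in the padding step to fill in each vanishing component of $v$; this is exactly where the restriction $v \in \N^n_{>0}$ in the statement (in contrast to $v \in \N^n$ in Theorem~\ref{thm:convex}) makes its presence felt.
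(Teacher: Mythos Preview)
Your overall plan---mirror Lemma~\ref{lem:convexhard}, reverse the sign of the Morse--Bott contribution, and use the Reeb Orbits axiom to bound $c_k$ from above---is exactly the paper's approach, and for $v\in\N^n_{>0}$ your argument matches theirs essentially verbatim. The divergence is entirely in how the orbits with $Z(v)>0$ are handled, and here your proposal has a genuine gap.

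Your displayed range $2\sum_i v_i-(n-1)\le\CZ(\gamma)\le 2\sum_i v_i-Z(v)$ is not the correct ``sign-reversed'' version of \eqref{eqn:CZperturb}. In the convex case the $Z(v)$ term arises from the $\C_i$ summands in \eqref{eqn:xiz} for $w_i=0$: the linearized Reeb flow on each is a positive rotation (since $\nu_i>0$), contributing $+1$ to $\CZ_\tau$. That positivity of $\nu_i$ does not change in the concave case---only the Morse--Bott tangential part flips sign---so the $\C_i$ contributions do not become $-1$, and your upper bound $\CZ(\gamma)\le 2\sum_i v_i - Z(v)$ is not justified. Moreover, even granting that bound, the padding step fails arithmetically: from $2k+n-1\le 2\sum_i v_i - Z(v)$ you get $k+n-1-\sum_i v_i\le (n-1-Z(v))/2$, which is less than $Z(v)$ whenever $Z(v)>(n-1)/3$, so you cannot always fill in all zero components while keeping $\sum_i v_i'=k+n-1$.

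The paper sidesteps this entirely by choosing a different perturbation condition (iii) tailored to the concave geometry: rather than making $\nu_i$ small when $w_i=0$ (as in the convex case), it arranges that $G(w)$ is nearly supported on the coordinates where $w$ vanishes. This forces the $\C_i$ rotations to be large, so any orbit coming from $v$ with a zero component has \emph{very large} Conley--Zehnder index and simply does not contribute to $\mc{A}_k^+$. Only $v\in\N^n_{>0}$ remain, and for those the paper's range $0\le -\CZ_\tau(\gamma)\le n-1$ gives $\sum_i v_i\le k+n-1$ directly; one then increases components (which does not decrease $[v]_\Omega$) to reach $\sum_i v_i=k+n-1$. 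You correctly identified this as the crux; the resolution is the perturbation, not the padding.
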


\begin{proof}
As in the proof of Lemma~\ref{lem:convexhard}, we may assume that $n>1$ and that:
\begin{description}
\item{(i)}
$\Sigma$ is a smooth hypersurface in $\R^n$.
\item{(ii)}
The Gauss map $G:\Sigma\to S^{n-1}$ is a smooth embedding, and $\partial X_\Omega$ is a smooth hypersurface in $\R^{2n}$, so that $X_\Omega$ is a nice star-shaped domain.
\item{(iii)}
If $w\in \Sigma$ and $w_i=0$ for some $i$, then $G(w)$ is close (with respect to $k$) to the set of $\nu\in S^{n-1}$ such that $\nu_j=0$ whenever $w_j\neq 0$.
\end{description}
Similarly to \eqref{eqn:rvf}, the Reeb vector field again preserves each torus $\mu^{-1}(w)$, on which now
\begin{equation}
\label{eqn:nrvf}
R = \frac{2\pi}{[G(w)]_\Omega}\sum_{w_i\neq 0}\nu_i\frac{\partial}{\partial \theta_i}
\end{equation}
where $G(w)=(\nu_1,\ldots,\nu_n)$. Continuing to define the modified Gauss map $\widetilde{G}$ by equation \eqref{eqn:gtilde}, and defining $V$ and $\mc{P}(v)$ as before, it follows that the set of Reeb orbits on $\partial X_\Omega$ is again given by $\sqcup_{v\in V}\mc{P}(v)$. Condition (iii) above implies that $v\in V$ whenever $\sum_iv_i$ is not too large, and equation \eqref{eqn:nrvf} implies that each Reeb orbit in $\mc{P}(v)$ has action $[v]_\Omega$.

As in Step 3 of the proof of Lemma~\ref{lem:convexhard}, we can perturb the concave toric domain $X_\Omega$ to a nice star-shaped domain $X'$ such that the contact form $\lambda_0|_{X'}$ is nondegenerate; up to large symplectic action, the Reeb orbits come from the tori $\mc{P}(v)$ where $\sum_iv_i$ is not too large; and a Reeb orbit $\gamma$ coming from $\mc{P}(v)$ has action
\begin{equation}
\label{eqn:step3a}
\mc{A}(\gamma) \le [v]_\Omega + \epsilon
\end{equation}
where $\epsilon>0$ can be chosen arbitrarily small.

If any component of $v$ is zero, then the Conley-Zehnder index of $\gamma$ will be very large, by condition (iii) above. Otherwise,
to compute the Conley-Zehnder index of $\gamma$, we use a homotopy class $\tau$ of trivialization of $\xi_\gamma$ defined as in the proof of Lemma~\ref{lem:convexhard}. Equation \eqref{eqn:c1convex} still holds, while the inequalities \eqref{eqn:CZrange} are replaced by
\[
0 \le -\CZ_\tau(\gamma) \le n-1.
\]
(Here the sign of $\CZ_\tau(\gamma)$ is switched because $\Sigma$ is concave instead of convex.)
Thus we obtain
\[
1-n+2\sum_{i=1}^nv_i  \le \CZ(\gamma) \le 2\sum_{i=1}^nv_i.
\]
In particular, we obtain
\begin{equation}
\label{eqn:step3b}
\CZ(\gamma) = 2k+n-1 \Longrightarrow k+\frac{n-1}{2} \le \sum_{i=1}^nv_i \le k + n - 1.
\end{equation}

It follows from \eqref{eqn:step3a} and \eqref{eqn:step3b} that
\[
\mc{A}_k^+(X') \le \max\left\{[v]_\Omega + \epsilon \;\bigg|\; v\in\N^n_{>0},\; k+\frac{n-1}{2} \le \sum_{i=1}^nv_i \le k + n - 1\right\}.
\]
As in Step 4 of the proof of Lemma~\ref{lem:convexhard}, we deduce that
\[
c_k(X_\Omega) \le \max\left\{[v]_\Omega \;\bigg|\; v\in\N^n_{>0}, \; k+\frac{n-1}{2} \le \sum_{i=1}^nv_i \le k + n - 1\right\}
\]
In the above maximum, we can restrict attention to $v$ with $\sum_iv_i=k+n-1$, since increasing some components of $v$ will not decrease $[v]_\Omega$. This completes the proof of \eqref{eqn:concavehard}.
\end{proof}

\section{Input from positive $S^1$-equivariant symplectic homology}
\label{sec:shproperties}

We now state the properties of positive $S^1$-equivariant symplectic homology, and transfer morphisms defined on it, that are needed to define the capacities $c_k$ and establish their basic properties. These properties are stated in Propositions~\ref{prop:ch} and \ref{prop:transfer} below, which are proved in \S\ref{sec:proofs1} and \S\ref{sec:proofs2} respectively.

We say that a Liouville domain $(X,\lambda)$ is {\bf nondegenerate\/} if the contact form $\lambda|_{\partial X}$ is nondegenerate. In this case we can define the {\bf positive $S^1$-equivariant symplectic homology} $SH^{S^1,+}(X,\lambda)$, see \S\ref{sec:SHS1+}. This is a $\Q$-module\footnote{It is also possible to define positive $S^1$-equivariant symplectic homology with integer coefficients. However the torsion in the latter is not relevant to the construction of the capacities $c_k$, and it will simplify our discussion to discard it.}.
To simplify notation, we often denote $SH^{S^1,+}(X,\lambda)$ by $CH(X,\lambda)$ below\footnote{The reason for this notation is that positive $S^1$-equivariant symplectic homology can be regarded as a substitute for linearized contact homology which can be defined without transversality difficulties \cite[\S3.2]{bo}.}.

\begin{proposition}
\label{prop:ch}
The positive $S^1$-equivariant symplectic homology $CH(X,\lambda)$ has the following properties:
\begin{description}
\item{(Free homotopy classes)}
$CH(X,\lambda)$ has a direct sum decomposition
\[
CH(X,\lambda) = \bigoplus_\Gamma CH(X,\lambda,\Gamma)
\]
where $\Gamma$ ranges over free homotopy classes of loops in $X$. We let $CH(X,\lambda,0)$ denote the summand corresponding to contractible loops in $X$.
\item{(Action filtration)}
For each $L\in\R$, there is a $\Q$-module $CH^L(X,\lambda,\Gamma)$ which is an invariant of $(X,\lambda,\Gamma)$. If $L_1<L_2$, then there is a well-defined map
\begin{equation}
\label{eqn:il1l2}
\imath_{L_2,L_1}: CH^{L_1}(X,\lambda,\Gamma) \longrightarrow CH^{L_2}(X,\lambda,\Gamma).
\end{equation}
These maps form a directed system, and we have the direct limit
\[
\lim_{L\to\infty} CH^L(X,\lambda,\Gamma) = CH(X,\lambda,\Gamma).
\]
We denote the resulting map $CH^L(X,\lambda,\Gamma) \to CH(X,\lambda,\Gamma)$ by $\imath_L$. We write $CH^L(X,\lambda) = \bigoplus_\Gamma CH^L(X,\lambda,\Gamma)$.
\item{($U$ map)} There is a distinguished map
\[
U:CH(X,\lambda,\Gamma)\longrightarrow CH(X,\lambda,\Gamma),
\]
which respects the action filtration in the following sense: For each $L\in\R$ there is a map
\[
U_L:CH^L(X,\lambda,\Gamma) \longrightarrow CH^L(X,\lambda,\Gamma).
\]
If $L_1<L_2$ then $U_{L_2}\circ \imath_{L_2,L_1} = \imath_{L_2,L_1}\circ U_{L_1}$. The map $U$ is the direct limit of the maps $U_L$, i.e.
\begin{equation}
\label{eqn:iLU}
\imath_L\circ U_L = U\circ \imath_L.
\end{equation}
\item{(Reeb Orbits)}
If $L_1<L_2$, and if there does not exist a Reeb orbit $\gamma$ of $\lambda|_{\partial X}$ in the free homotopy class $\Gamma$ with action $\mc{A}(\gamma)\in (L_1,L_2]$, then the map \eqref{eqn:il1l2} is an isomorphism.
\item{($\delta$ map)} There is a distinguished map
\[
\delta: CH(X,\lambda,\Gamma) \longrightarrow H_*(X,\partial X;\Q) \tensor H_*(BS^1;\Q)
\]
which vanishes whenever $\Gamma\neq 0$.
\item{(Scaling)} If $r$ is a positive real number then there are canonical isomorphisms
\[
\begin{split}
CH(X,\lambda,\Gamma) &\stackrel{\simeq}{\longrightarrow} CH(X,r\lambda,\Gamma),\\
CH^L(X,\lambda,\Gamma) &\stackrel{\simeq}{\longrightarrow} CH^{rL}(X,r\lambda,\Gamma)
\end{split}
\]
which commute with all of the above maps.
\item{(Star-Shaped Domains)} If $X$ is a nice star-shaped domain in $\R^{2n}$ and $\lambda_0$ is the restriction of the standard Liouville form \eqref{eqn:lambda0}, then:
\begin{description}
\item{(i)}
$CH(X,\lambda_0)$ and $CH^L(X,\lambda_0)$ have canonical $\Z$ gradings. With respect to this grading, we have
\begin{equation}
\label{eqn:sscomputation}
CH_*(X,\lambda_0) \simeq \left\{\begin{array}{cl} \Q, & \mbox{if $*\in n+1+2\N$},\\
0, & \mbox{otherwise}.
\end{array}\right.
\end{equation}
\item{(ii)}
The map $\delta$ sends a generator of $CH_{n-1+2k}(X,\lambda_0)$ to a generator of $H_{2n}(X,\partial X;\Q)$ tensor a generator of $H_{2k-2}(BS^1;\Q)$.
\item{(iii)}
The $U$ map has degree $-2$ and is an isomorphism
\[
CH_*(X,\lambda_0) \stackrel{\simeq}{\longrightarrow} CH_{*-2}(X,\lambda_0),
\]
except when $*=n+1$. 
\item{(iv)}
If $\lambda_0|_{\partial X}$ is nondegenerate and has no Reeb orbit $\gamma$ with $\mc{A}(\gamma)\in(L_1,L_2]$ and $\op{CZ}(\gamma)=n-1+2k$, then the map 
\[
\imath_{L_2,L_1}: CH_{n-1+2k}^{L_1}(X,\lambda_0) \to CH_{n-1+2k}^{L_2}(X,\lambda_0)
\]
is surjective.
\end{description}
\end{description}
\end{proposition}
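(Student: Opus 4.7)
The plan is to construct $CH(X,\lambda) = SH^{S^1,+}(X,\lambda)$ at the chain level following \S\ref{sec:equivariantsh}. One chooses an admissible Hamiltonian $H$ on the Liouville completion $\widehat{X} = X \cup_{\partial X} [1,\infty) \times \partial X$ together with an $S^1$-equivariant family of parameterized perturbations over $S^{2N+1}$; after discarding the ``constant'' orbits lying in $X$ and taking direct limits in $N$ and in the slope of $H$, the resulting complex is generated by good Reeb orbits of $\lambda|_{\partial X}$, each equipped with a free homotopy class, an action, and (when nondegenerate) a Conley-Zehnder index. The Free Homotopy Classes decomposition follows since Floer cylinders preserve the free homotopy class of their ends. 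The Action Filtration is tautological: each generator has action equal to that of its underlying Reeb orbit, the Floer differential strictly decreases action, and the subcomplex $CC^L$ of generators of action at most $L$ gives $CH^L(X,\lambda,\Gamma)$ with direct limit $CH(X,\lambda,\Gamma)$. The Reeb Orbits axiom is then immediate: when $CC^{L_2}/CC^{L_1}$ has no generator in the given free homotopy class with action in $(L_1,L_2]$, the inclusion is a chain isomorphism in that summand. The $U$ map comes from the natural $\Q[U] \simeq H_*(BS^1;\Q)$-module structure inherited from the Borel construction; it is defined by counting parameterized Floer cylinders without altering the Hamiltonian, so it respects the action filtration and satisfies \eqref{eqn:iLU}. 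Scaling is routine because Liouville rescaling carries admissible Hamiltonians to admissible Hamiltonians and multiplies both actions and slopes by $r$.

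The $\delta$ map is defined as the connecting homomorphism of the exact triangle relating full $S^1$-equivariant symplectic homology to its positive part, whose third vertex encodes the constant orbits in $X$ and is identified with $H_*(X,\partial X;\Q) \otimes H_*(BS^1;\Q)$ up to a grading shift. Since constants are contractible, $\delta$ vanishes on any summand of $CH$ labeled by a nontrivial free homotopy class. For the Star-Shaped Domains axiom, invariance of symplectic homology under Liouville-equivalence reduces the computation to a single model: every nice star-shaped domain shares the completion $(\R^{2n},\lambda_0)$, so it suffices to evaluate on an ellipsoid $E(a_1,\ldots,a_n)$ with $a_i/a_j \notin \Q$ for $i \ne j$. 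By \S\ref{sec:ellipsoid}, the Reeb orbits are the iterates $\gamma_i^m$, and \eqref{eqn:mist} produces exactly one such orbit of Conley-Zehnder index $n-1+2k$ and action $M_k(a_1,\ldots,a_n)$ for each $k \ge 1$. With the convention that a good orbit contributes a generator in degree equal to its Conley-Zehnder index, all generators have parity $n-1$, so the degree-$(-1)$ differential vanishes for parity reasons, establishing \eqref{eqn:sscomputation}. Part (iv) then follows from the same long exact sequence argument as for the Reeb Orbits axiom: the hypothesis forces $CH^{(L_1,L_2]}_{n-1+2k} = 0$, whence $\imath_{L_2,L_1}$ is surjective in that degree.

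The main obstacle is parts (ii) and (iii), which require pinning down $\delta$ and $U$ concretely. Both rest on the vanishing $SH^{S^1}(X,\lambda_0) = 0$ for star-shaped $X$, a consequence of $SH(X,\lambda_0) = 0$ for subcritical Weinstein domains via the Leray-Serre spectral sequence of the Borel construction. Given this, the exact triangle of $\delta$ degenerates into an isomorphism identifying each $CH_{n-1+2k}(X,\lambda_0)$ with $H_{2n}(X, \partial X; \Q) \otimes H_{2k-2}(BS^1; \Q)$, settling (ii) by degree counting. This isomorphism is automatically $\Q[U]$-linear, so the $U$ map on $CH(X,\lambda_0)$ is transported to multiplication by the degree-$2$ generator of $\Q[U] \simeq H_*(BS^1;\Q)$ on the right. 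The latter is an isomorphism $H_{2j}(BS^1) \to H_{2j-2}(BS^1)$ for $j \ge 1$ and is zero on $H_0(BS^1)$; transporting back gives (iii), with $U$ an isomorphism $CH_{n-1+2k} \to CH_{n-3+2k}$ for $k \ge 2$ and zero on $CH_{n+1}$ (also forced by $CH_{n-1}=0$).
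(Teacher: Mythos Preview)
Your outline for the Free Homotopy, Action Filtration, Reeb Orbits, Scaling, and $\delta$ map properties matches the paper's argument in \S\ref{sec:proofs1} essentially verbatim. The substantive differences are in the Star-Shaped Domains part and in your underlying chain model.

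You assert early on that ``the resulting complex is generated by good Reeb orbits,'' i.e.\ one generator per good orbit in degree equal to its Conley--Zehnder index. That is \emph{not} what the construction in \S\ref{sec:equivariantsh} produces: the Borel-type complex $CF^{S^1,N,+}$ has generators $u^i\otimes\widecheck{\gamma}$ and $u^i\otimes\widehat{\gamma}$ for $0\le i\le N$ and \emph{all} Reeb orbits $\gamma$, in degrees $\CZ(\gamma)+2i$ and $\CZ(\gamma)+2i+1$. The passage to a complex with one generator per good orbit is a nontrivial theorem (stated only as a remark after Proposition~\ref{prop:ch}, citing \cite{gg}, and proved there for a different model of equivariant SH). This matters most for (iv): with the paper's chain model, the absence of a Reeb orbit with $\CZ=n-1+2k$ and action in $(L_1,L_2]$ does \emph{not} make the quotient chain group in that degree vanish, since e.g.\ $u^1\otimes\widecheck{\gamma}$ with $\CZ(\gamma)=n-3+2k$ could land there. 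The paper instead filters by action level, invokes the computation of the associated graded homology from \cite[\S3.2]{gutt} (only $u^0\otimes\widecheck{\gamma}$ and $u^N\otimes\widehat{\gamma}$ survive), and then applies a four-lemma style argument (Lemma~\ref{lem:algebra}) to push surjectivity through the filtration. Your one-line argument for (iv) is correct \emph{given} the refined model, but you should flag that this model is an extra input not supplied by \S\ref{sec:equivariantsh}.

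For (i) you compute on an irrational ellipsoid by parity; the paper instead deduces \eqref{eqn:sscomputation} from $SH^{S^1}(X,\lambda_0)=0$ and the long exact sequence (and notes your route as an alternative). For (iii) you argue that $\delta$ is $\Q[U]$-linear and transport the $U$-action across the isomorphism; the paper instead uses the Gysin-type sequence \eqref{Gysin} together with $SH^+_*(X)=\Q$ concentrated in degree $n+1$. Both arguments are valid: yours is shorter once you observe that multiplication by $u^{-1}$ preserves the short exact sequence \eqref{eqn:sescc}, while the paper's route avoids checking $U$-equivariance of $\delta$ but needs the separate computation of $SH^+$.
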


\begin{remark}
One can presumably refine the ``Reeb Orbits'' property to show that in the nondegenerate case, $CH^L(X,\lambda,\Gamma)$ is the homology of a chain complex (with noncanonical differential) which is generated by the good Reeb orbits $\gamma$ of $\lambda|_{\partial X}$ in the free homotopy class $\Gamma$ with symplectic action $\mc{A}(\gamma)\le L$. (A Reeb orbit $\gamma$ is called {\bf bad\/} if it is an even multiple cover of a Reeb orbit $\gamma'$ such that the Conley-Zehnder indices of $\gamma$ and $\gamma'$ have opposite parity; otherwise it is called {\bf good}.) Moreover, if $L_1<L_2$, then one can take the differential for $L_1$ to be the restriction of the differential for $L_2$, and the map $\imath_{L_2,L_1}$ is induced by the inclusion of chain complexes. This is shown in \cite[Prop.\ 3.3]{gg} using a different definition of equivariant symplectic homology.
\end{remark}

Now suppose that $(X',\lambda')$ is another nondegenerate Liouville domain and $\varphi:(X,\lambda)\to(X',\lambda')$ is a generalized Liouville embedding (see Definition~\ref{def:gle}) with $\varphi(X)\subset\op{int}(X')$. One can then define a {\bf transfer morphism}
\[
\Phi: CH(X',\lambda') \longrightarrow CH(X,\lambda),
\]
see \S\ref{section:transfer}.

\begin{proposition}
\label{prop:transfer}
The transfer morphism $\Phi$ has the following properties:
\begin{description}
\item{(Action)} $\Phi$ respects the action filtration in the following sense: For each $L\in\R$ there are distinguished maps
\[
\Phi^L:CH^L(X',\lambda') \longrightarrow CH^L(X,\lambda)
\]
such that if $L_1<L_2$ then
\begin{equation}
\label{eqn:filtran}
\Phi^{L_2}\circ \imath_{L_2,L_1} = \imath_{L_2,L_1}\circ \Phi^{L_1},
\end{equation}
and $\Phi$ is the direct limit of the maps $\Phi^L$, i.e.
\begin{equation}
\label{eqn:iLP}
\imath_L\circ \Phi^L = \Phi\circ \imath_L.
\end{equation}
\item{(Commutativity with $U$)}
For each $L\in\R$, the diagram
\begin{equation}
\label{eqn:commU}
\begin{CD}
CH^L(X',\lambda') @>{\Phi^L}>> CH^L(X,\lambda)\\
@VV{U^L}V @VV{U^L}V\\
CH^L(X',\lambda') @>{\Phi^L}>> CH^L(X,\lambda)
\end{CD}
\end{equation}
commutes.
\item{(Commutativity with $\delta$)} The diagram
\begin{equation}
\label{eqn:commdelta}
\begin{CD}
CH(X',\lambda') @>{\Phi}>> CH(X,\lambda)\\
@VV{\delta}V @VV{\delta}V \\
H_*(X',\partial X';\Q)\tensor H_*(BS^1;\Q) @>{\rho\tensor 1}>> H_*(X,\partial X;\Q)\tensor H_*(BS^1;\Q)
\end{CD}
\end{equation}
commutes. Here $\rho:H_*(X',\partial X';\Q) \to H_*(X,\partial X;\Q)$ denotes the composition
\[
H_*(X',\partial X';\Q) \longrightarrow H_*(X',X'\setminus \varphi(\op{int}(X));\Q) \stackrel{\simeq}{\longrightarrow} H_*(\varphi(X),\varphi(\partial X);\Q) = H_*(X,\partial X;\Q)
\]
where the first map is the map on relative homology induced by the triple $(X',X'\setminus \varphi(\op{int}(X)), \partial X')$, and the second map is excision. 
\end{description}
\end{proposition}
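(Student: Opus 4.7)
The plan is to construct the maps $\Phi^L$ at the chain level using the same Viterbo-style step-shaped parametrized Hamiltonians that will be used to define $\Phi$ in \S\ref{section:transfer}, and then to derive the three commutativity properties from standard naturality arguments for continuation maps, together with the specific geometric features of the transfer construction.

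First I would recall that $\Phi$ is defined by choosing an $S^\infty$-family of admissible Hamiltonians on the completion $\widehat{X'}$ that are $C^2$-small on $\varphi(X)$, jump steeply in a collar $\varphi(\partial X)\times[1,1+\epsilon]$, are again small on $X'\setminus\varphi(\op{int}(X))$, and are finally linear in the radial coordinate at infinity. The no-escape lemma (integrated maximum principle) forces Floer cylinders whose asymptotes sit among the generators coming from orbits in $\varphi(X)$ to stay inside $\varphi(X)$ together with the intermediate collar; this produces a subcomplex of the parametrized Floer complex on $\widehat{X'}$, and $\Phi$ is induced by projecting to this subcomplex and then identifying its homology with $CH(X,\lambda)$ via a further continuation. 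To define $\Phi^L$ I would perform the same construction while restricting on both sides to the action-truncated subcomplexes; the standard action estimate for Floer cylinders ensures that the subcomplex and quotient structure is preserved by the action filtration, up to a uniformly small error that can be absorbed by the $\epsilon$ in the choice of step Hamiltonian. The naturality identity \eqref{eqn:filtran} is then tautological from the inclusion of truncated subcomplexes, and \eqref{eqn:iLP} follows because taking direct limits over $L$ commutes with this chain-level construction.

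For the commutativity with $U$, I would use that $U^L$ is induced by cap product with a generator of $H^2(BS^1;\Q)$ on the Borel complex, realized at the chain level either by a codimension-two cellular shift in the $S^\infty$-parameter or, equivalently, by inserting a marked point on the Floer cylinder constrained to a codimension-two cycle in $ES^1$. Since $\Phi^L$ is itself built from an $S^\infty$-parametrized count of Floer cylinders, the two operations act on disjoint pieces of data. A standard interpolation through a two-parameter moduli space, exactly as in the naturality of $U$ under continuation maps in \cite{bo}, then produces a chain homotopy between $\Phi^L\circ U^L$ and $U^L\circ\Phi^L$, yielding \eqref{eqn:commU}. For the commutativity with $\delta$, I would use that $\delta$ is the connecting homomorphism in the tautological long exact sequence
\[
\cdots \to SH^{S^1}_*(X,\lambda) \to H_*(X,\partial X;\Q)\tensor H_*(BS^1;\Q) \stackrel{\delta}{\to} CH_{*-1}(X,\lambda) \to \cdots
\]
relating full $S^1$-equivariant symplectic homology to its positive part and to the constants contribution (the latter being the $S^1$-equivariant Morse homology of a small Morse function on $X$, which computes $H_*(X,\partial X;\Q)\tensor H_*(BS^1;\Q)$). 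The transfer construction extends to a morphism of this entire long exact sequence, and the map induced on the constants piece is precisely $\rho\tensor 1$: the $BS^1$-factor is untouched by the transfer, while on the Morse side the restriction from a Morse function on $X'$ to one on $\varphi(X)$, together with the no-escape lemma, realizes the relative-homology map $\rho$ via excision of $X'\setminus\varphi(\op{int}(X))$. Naturality of the connecting homomorphism then gives the square \eqref{eqn:commdelta}.

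The main obstacle I expect is establishing the action-filtration compatibility rigorously in the parametrized setting: one must show that the Floer trajectories involved in the transfer construction neither cross the collar in the wrong direction nor jump to higher action as the $S^\infty$-parameter varies. This requires a careful choice of $S^\infty$-family of admissible almost complex structures together with a version of the integrated maximum principle that holds uniformly in compact subsets of $S^\infty$. Once this parametrized no-escape and action monotonicity is in hand, both \eqref{eqn:commU} and \eqref{eqn:commdelta} reduce to formal naturality arguments for chain maps built from the same cellular model of $ES^1$.
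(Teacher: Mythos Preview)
Your overall strategy is correct and matches the paper's for the Action and $\delta$ properties: the paper also defines $\Phi^L$ by restricting the continuation chain maps to action-truncated subcomplexes (your worry about parametrized action monotonicity is handled upstream by condition (iii) in Definition~\ref{def:aph} and Lemma~\ref{lem:dpa}), and for $\delta$ the paper likewise shows that the transfer chain map is a map of the short exact sequence \eqref{eqn:sescc}, then invokes naturality of the connecting homomorphism. Two small corrections: the orbits in regions $\rom{1},\rom{2}$ form a \emph{quotient} complex, not a subcomplex (it is $C^{\rom{3},\rom{4},\rom{5}}$ that is the subcomplex, by the no-escape lemma), and in your long exact sequence the arrow you label $\delta$ points the wrong way---in the paper $\delta$ is the connecting map $CH\to H_*(X,\partial X)\otimes H_*(BS^1)$.

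The one place your route genuinely diverges is the commutativity with $U$. You propose a two-parameter moduli space interpolation producing a chain homotopy between $\Phi^L\circ U^L$ and $U^L\circ\Phi^L$. The paper instead works in the simplified model of Remark~\ref{rmk:simplifyingcomplex} and Remark~\ref{rem:simplifyingtransfer}, where the chain module is $\Q\{1,u,\ldots,u^N\}\otimes CF(H')$, the map $U$ is literally multiplication by $u^{-1}$, and the transfer chain map has the lower-triangular form $\psi(u^k\otimes\gamma)=\sum_{i=0}^k u^{k-i}\otimes\psi_i(\gamma)$. With these explicit formulas, $u^{-1}\circ\psi=\psi\circ u^{-1}$ holds \emph{exactly} at the chain level by a two-line computation---no homotopy or auxiliary moduli space is needed. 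Your approach would also work, but the paper's algebraic shortcut is considerably cleaner and avoids any further analysis; the price is that one must first set up the simplified complex and verify that the transfer map can be arranged to respect the inclusions $\widetilde{\imath}_0,\widetilde{\imath}_1$ so that \eqref{eqn:transferpsi} holds.
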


\section{Definition of the capacities $c_k$}
\label{sec:defineck}

\subsection{Nondegenerate Liouville domains}

We first define the capacities $c_k$ for nondegenerate Liouville domains, imitating the definition of ECH capacities in \cite[Def.\ 4.3]{qech}.

\begin{definition}
\label{def:cknondeg}
Let $(X,\lambda)$ be a nondegenerate Liouville domain and let $k$ be a positive integer. Define
\[
c_k(X,\lambda)\in(0,\infty]
\]
to be the infimum over $L$ such that there exists $\alpha\in CH^L(X,\lambda)$ satisfying
\begin{equation}
\label{eqn:alpha}
\delta U^{k-1}\imath_L\alpha = [X]\tensor [\op{pt}] \in H_*(X,\partial X)\tensor H_*(BS^1).
\end{equation}
\end{definition}

We now show that the function $c_k$ satisfies most of the axioms of Theorem~\ref{thm:liouville} (restricted to nondegenerate Liouville domains):

\begin{lemma}
\label{lem:cknondeg}
\begin{description}
\item{(a)}
If $(X,\lambda)$ is a nondegenerate Liouville domain and $r>0$, then
\[
c_k(X,r\lambda)=rc_k(X,\lambda).
\]
\item{(b)} If $(X,\lambda)$ is a nondegenerate Liouville domain and $k>1$ then
\[
c_{k-1}(X,\lambda)\le c_k(X,\lambda).
\]
\item{(c)} If $(X,\lambda)$ and $(X',\lambda')$ are nondegenerate Liouville domains, and if there exists a generalized Liouville embedding $\varphi: (X,\lambda)\to (X',\lambda')$ with $\varphi(X)\subset\op{int}(X')$, then
\[
c_k(X,\lambda) \le c_k(X',\lambda').
\]
\item{(d)} If $(X,\lambda)$ is a nondegenerate Liouville domain, and if $c_k(X,\lambda)<\infty$, then $c_k(X,\lambda) = \mc{A}(\gamma)$ for some Reeb orbit $\gamma$ of $\lambda|_{\partial X}$ which is contractible in $X$.
\end{description}
\end{lemma}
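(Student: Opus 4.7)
The plan is to verify the four parts by combining the axioms for $CH^L$ listed in Proposition~\ref{prop:ch} with the transfer-morphism properties of Proposition~\ref{prop:transfer}. I expect (a)--(c) to reduce to short diagram chases and (d) to be the technical heart.

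For (a), the Scaling isomorphisms between $CH^L(X,\lambda)$ and $CH^{rL}(X,r\lambda)$ commute with $\imath$, $U$, and $\delta$, so they set up a bijection between classes satisfying~\eqref{eqn:alpha} at level $L$ for $(X,\lambda)$ and classes satisfying it at level $rL$ for $(X,r\lambda)$. Taking infima yields the equality. For (b), given $\alpha \in CH^L(X,\lambda)$ satisfying~\eqref{eqn:alpha}, I set $\alpha' := U_L \alpha$ and use $\imath_L\circ U_L = U\circ \imath_L$ to obtain $\delta U^{k-2}\imath_L\alpha' = \delta U^{k-1}\imath_L\alpha = [X]\otimes[\op{pt}]$, so $c_{k-1}(X,\lambda)\le L$; taking the infimum over admissible $L$ gives the inequality.

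For (c), given $\alpha' \in CH^L(X',\lambda')$ satisfying~\eqref{eqn:alpha} for $c_k(X',\lambda')$, I set $\alpha := \Phi^L(\alpha')$ and apply, in turn, the identity $\imath_L \circ \Phi^L = \Phi \circ \imath_L$, the commutation $U \circ \Phi = \Phi \circ U$ obtained as the direct limit of~\eqref{eqn:commU}, and the $\delta$-commutation~\eqref{eqn:commdelta} to get $\delta U^{k-1}\imath_L\alpha = (\rho\otimes 1)\bigl([X']\otimes[\op{pt}]\bigr)$. It then suffices to verify $\rho([X']) = [X]$; this holds because in the composition defining $\rho$ the excision isomorphism sends the image of the fundamental class $[X']$ to the fundamental class of the embedded submanifold $\varphi(X)$.

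For (d), set $L := c_k(X,\lambda) < \infty$. For each $\epsilon > 0$ the infimum produces some $\alpha_\epsilon \in CH^{L+\epsilon}(X,\lambda)$ satisfying~\eqref{eqn:alpha}. Since $\delta$ vanishes on the summands $CH(X,\lambda,\Gamma)$ with $\Gamma\neq 0$, I may replace $\alpha_\epsilon$ by its component in $CH^{L+\epsilon}(X,\lambda,0)$ without losing~\eqref{eqn:alpha}. Suppose for contradiction that no Reeb orbit of $\lambda|_{\partial X}$ contractible in $X$ has action exactly $L$. Nondegeneracy of $\lambda|_{\partial X}$ makes the set of actions of contractible Reeb orbits a discrete subset of $(0,\infty)$, so I can pick $\epsilon > 0$ and $L' < L$ with $(L', L+\epsilon]$ disjoint from it. The Reeb Orbits axiom then makes $\imath_{L+\epsilon, L'} : CH^{L'}(X,\lambda,0) \to CH^{L+\epsilon}(X,\lambda,0)$ an isomorphism, and lifting $\alpha_\epsilon$ through it, combined with $\imath_{L+\epsilon}\circ \imath_{L+\epsilon,L'} = \imath_{L'}$, produces an admissible class at level $L' < L$, contradicting $c_k(X,\lambda) = L$. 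The main obstacle is precisely this upgrade from ``levels arbitrarily close to $L$ are admissible'' to ``the action value $L$ is itself attained by a contractible orbit''; it rests on both nondegeneracy (for discreteness of the action spectrum in any bounded window) and the free-homotopy-class splitting (to localize the Reeb Orbits axiom to the $\Gamma = 0$ sector where $\delta$ is nontrivial).
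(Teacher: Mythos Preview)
Your proof is correct and follows essentially the same approach as the paper's own argument: parts (a)--(c) are handled by the same diagram chases using the Scaling axiom, the identity $\imath_L\circ U_L = U\circ\imath_L$, and the filtered transfer map $\Phi^L$ together with its compatibilities with $\imath$, $U$, and $\delta$; part (d) uses the same contradiction via the Reeb Orbits isomorphism after projecting to the $\Gamma=0$ summand. The only cosmetic differences are that the paper writes the chain of equalities in (c) at the filtered level (using $U_L$ and $\Phi^L$ throughout) rather than passing first to the direct limit, and in (d) it takes the symmetric interval $[L-\epsilon,L+\epsilon]$ rather than your $(L',L+\epsilon]$.
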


\begin{proof}
(a) This follows from the Scaling axiom in Proposition~\ref{prop:ch}.

(b) Suppose that $\alpha\in CH^L(X,\lambda)$ satisfies \eqref{eqn:alpha}. We need to show that there exists $\alpha'\in CH^L(X,\lambda)$ such that
\[
\delta U^{k-2}\imath_L\alpha' = [X]\tensor [\op{pt}].
\]
By equation \eqref{eqn:iLU}, we can take $\alpha'=U_L\alpha$.

(c) Suppose that $\alpha'\in CH^L(X',\lambda')$ satisfies
\begin{equation}
\label{eqn:alpha'}
\delta U^{k-1}\imath_L\alpha' = [X']\tensor [\op{pt}].
\end{equation}
We need to show that there exists $\alpha\in CH^L(X,\lambda)$ satisfying
\[
\delta U^{k-1}\imath_L\alpha = [X]\tensor [\op{pt}].
\]
We claim that we can take $\alpha=\Phi^L\alpha'$ where $\Phi^L$ is the filtered transfer map from Proposition~\ref{prop:transfer}(a). To see this, we observe that
\[
\begin{split}
\delta U^{k-1} \imath_L \Phi^L \alpha' &= \delta\imath_L U_L^{k-1}\Phi^L\alpha'\\
&= \delta\imath_L\Phi^LU_L^{k-1}\alpha'\\
&= \delta\Phi\imath_LU_L^{k-1}\alpha'\\
&= (\rho\tensor 1)\delta U^{k-1}\imath_L\alpha'\\
&= (\rho\tensor 1)([X']\tensor[\op{pt}])\\
&= [X]\tensor[\op{pt}].
\end{split}
\]
Here the first equality holds by \eqref{eqn:iLU}, the second equality follows from \eqref{eqn:commU}, the third equality holds by \eqref{eqn:iLP}, the fourth equality uses \eqref{eqn:commdelta} and \eqref{eqn:iLU} again, and the fifth equality follows from the hypothesis \eqref{eqn:alpha'}.

(d) Suppose that $c_k(X,\lambda) = L < \infty$. Suppose to get a contradiction that there is no Reeb orbit of action $L$ which is contractible in $X$. Since $\lambda|_{\partial X}$ is nondegenerate, there are only finitely many Reeb orbits of action less than $2L$. It follows that we can find $\epsilon>0$ such that there is no Reeb orbit which is contractible in $X$ and has action in the interval $[L-\epsilon,L+\epsilon]$, and such that there exists $\alpha_+\in CH^{L+\epsilon}(X,\lambda)$ with
\[
\delta U^{k-1}\imath_{L+\epsilon}\alpha_+ = [X]\tensor [\op{pt}].
\]
By the last part of the ``$\delta$ map'' property in Proposition~\ref{prop:ch}, we can assume that $\alpha_+\in CH^{L+\epsilon}(X,\lambda,0)$. By the ``Reeb Orbits'' property, there exists $\alpha_-\in CH^{L-\epsilon}(X,\lambda,0)$ with $\imath_{L+\epsilon,L-\epsilon}\alpha_-=\alpha_+$. It follows that
\[
\delta U^{k-1}\imath_{L-\epsilon}\alpha_- = [X]\tensor [\op{pt}].
\]
This implies that $c_k(X,\lambda)\le L-\epsilon$, which is the desired contradiction.
\end{proof}

\subsection{Arbitrary Liouville domains}
\label{sec:ald}

We now extend the definition of $c_k$ to an arbitrary Liouville domain $(X,\lambda)$. To do so, we use the following procedure to perturb a possibly degenerate Liouville domain to a nondegenerate one.

First recall that there is a distinguished Liouville vector field $V$ on $X$ characterized by $\imath_Vd\lambda=\lambda$. Write $Y=\partial X$. The flow of $V$ then defines a smooth embedding
\begin{equation}
\label{eqn:embedding}
(-\infty,0]\times Y \longrightarrow X,
\end{equation}
sending $\{0\}\times Y$ to $Y$ in the obvious way,
such that if $\rho$ denotes the $(-\infty,0]$ coordinate, then $\partial_\rho$  is mapped to the vector field $V$. This embedding pulls back the Liouville form $\lambda$ on $X$ to the $1$-form $e^\rho(\lambda|_Y)$ on $(-\infty,0]\times Y$.
The {\bf completion\/} of $(X,\lambda)$ is the pair $(\widehat{X},\widehat{\lambda})$ defined as follows. First,
\[
\widehat{X} = X\cup_Y([0,\infty)\times Y),
\] 
glued using the identification \eqref{eqn:embedding}. Observe that $\widehat{X}$ has a subset which is identified with $\R\times Y$, and we denote the $\R$ coordinate on this subset by $\rho$. The $1$-form $\lambda$ on $X$ then extends to a unique $1$-form $\widehat{\lambda}$ on $\widehat{X}$ which agrees with $e^\rho(\lambda|_Y)$ on $\R\times Y$.

Now if $f:Y\to\R$ is any smooth function, define a new Liouville domain $(X_f,\lambda_f)$, where
\[
X_f = \widehat{X} \setminus \{(\rho,y)\in\R\times Y \mid \rho>f(y)\},
\]
and $\lambda_f$ is the restriction of $\widehat{\lambda}$ to $X_f$. For example, if $f\equiv 0$, then $(X_f,\lambda_f)=(X,\lambda)$. In general, there is a canonical identification
\[
\begin{split}
Y &\longrightarrow \partial X_f,\\
y &\longmapsto (f(y),y)\in\R\times Y.
\end{split}
\]
Under this identification,
\[
\lambda_f|_{\partial X_f} = e^f\lambda|_{Y}.
\]

We now consider $c_k$ of nondegenerate perturbations of a possibly degenerate Liouville domain.

\begin{lemma} (cf. \cite[Lem.\ 3.5]{qech})
\label{lem:makessense}
\begin{description}
\item{(a)}
If $(X,\lambda)$ is any Liouville domain, then
\begin{equation}
\label{eqn:ckarb}
\sup_{f_- < 0}c_k(X_{f_-},\lambda_{f_-}) = \inf_{f_+ > 0} c_k(X_{f_+},\lambda_{f_+}).
\end{equation}
Here the supremum and infimum are taken over functions $f_-:Y\to(-\infty,0)$ and $f_+:Y\to(0,\infty)$ respectively such that the contact form $e^{f_\pm}(\lambda|_Y)$ is nondegenerate.
\item{(b)}
If $(X,\lambda)$ is nondegenerate, then the supremum and infimum in \eqref{eqn:ckarb} agree with $c_k(X,\lambda)$.
\end{description}
\end{lemma}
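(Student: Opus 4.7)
The proof rests on a scaling identity combined with Restricted Monotonicity. The key identity is that, for any smooth function $g : Y \to \R$ such that $(X_g, \lambda_g)$ is nondegenerate and any $s \in \R$, the time-$s$ Liouville flow $\phi_s$ on $\widehat{X}$ restricts to a diffeomorphism $X_g \to X_{g+s}$ satisfying $\phi_s^*\lambda_{g+s} = e^s\lambda_g$; combining the resulting isomorphism $(X_g, e^s\lambda_g) \cong (X_{g+s}, \lambda_{g+s})$ of Liouville domains with the Conformality axiom of Lemma~\ref{lem:cknondeg}(a) yields
\[
c_k(X_{g+s}, \lambda_{g+s}) = e^s\, c_k(X_g, \lambda_g).
\]
Moreover, $(X_{g+s}, \lambda_{g+s})$ is automatically nondegenerate whenever $(X_g, \lambda_g)$ is, since rescaling a contact form by a positive constant merely rescales the Reeb vector field and preserves the orbit structure.

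For part (a), the inequality $\sup \le \inf$ is immediate: whenever $f_- < 0 < f_+$ are both nondegenerate, $X_{f_-} \hookrightarrow X_{f_+}$ lands in the interior, and the two Liouville forms $\lambda_{f_\pm}$ are both restrictions of $\widehat{\lambda}$, so the inclusion is a generalized Liouville embedding and Lemma~\ref{lem:cknondeg}(c) applies. For the reverse $\sup \ge \inf$, my plan is, given $\epsilon > 0$, to choose a smooth $g : Y \to \R$ with $\|g\|_{C^0}$ tiny and $(X_g, \lambda_g)$ nondegenerate (a generic small perturbation suffices), and then take $s_+$ slightly larger than $-\min g$ and $s_-$ slightly smaller than $-\max g$ so that $f_\pm := g + s_\pm$ have the required sign and remain nondegenerate. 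The scaling identity then yields
\[
c_k(X_{f_+}, \lambda_{f_+}) = e^{s_+ - s_-}\, c_k(X_{f_-}, \lambda_{f_-}),
\]
in which $s_+ - s_-$ is essentially $\operatorname{osc}(g)$, which can be made arbitrarily small.

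For part (b), when $(X, \lambda)$ is nondegenerate, every constant perturbation $f = s$ is itself nondegenerate, and the scaling identity reads $c_k(X_s, \lambda_s) = e^s c_k(X, \lambda)$. Letting $s \to 0$ through positive or negative constants pinches the supremum and infimum against $c_k(X, \lambda)$; equivalently, Restricted Monotonicity applied to the chain $X_{f_-} \hookrightarrow X \hookrightarrow X_{f_+}$ (all valid generalized Liouville embeddings in this setting) sandwiches $c_k(X, \lambda)$ between $\sup$ and $\inf$, which agree by part (a).

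The subtle step --- and main obstacle --- is the reverse inequality in part (a) when $\lambda|_Y$ itself is degenerate: constant perturbations cannot remove the degeneracy, forcing a non-constant $g$, after which the factor $e^{\operatorname{osc}(g)}$ in the comparison must be controlled by shrinking $g$ in $C^0$ while preserving the genericity needed for nondegeneracy of $(X_g, \lambda_g)$.
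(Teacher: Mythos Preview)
Your proof is correct and follows essentially the same approach as the paper: both use the scaling identity $c_k(X_{g+s},\lambda_{g+s})=e^s c_k(X_g,\lambda_g)$ (the paper isolates this as Lemma~\ref{lem:conftrick}) together with Restricted Monotonicity for the easy inequality, and then a small nondegenerate perturbation shifted by constants for the reverse. The only cosmetic difference is that the paper picks $f_+$ directly with values in $(0,\epsilon)$ and sets $f_- = f_+ - \epsilon$, so the comparison factor is exactly $e^\epsilon$ rather than $e^{s_+-s_-}$; your version with a centered $g$ and two shifts is equivalent.
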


As a result of Lemma~\ref{lem:makessense}, it makes sense to extend Definition~\ref{def:cknondeg} as follows:

\begin{definition}
\label{def:ckarb}
If $(X,\lambda)$ is any Liouville domain, define $c_k(X,\lambda)$ to be the supremum and infimum in \eqref{eqn:ckarb}.
\end{definition}

The proof of Lemma~\ref{lem:makessense} will use the following simple fact:

\begin{lemma}
\label{lem:conftrick}
If $(X_{f_-},\lambda_{f_-})$ is nondegenerate, and if $f_+=f_-+\epsilon$ for some $\epsilon\in\R$, then $(X_{f_+},\lambda_{f_+})$ is also nondegenerate and
\[
c_k(X_{f_+},\lambda_{f_+}) = e^\epsilon c_k(X_{f_-},\lambda_{f_-}).
\]
\end{lemma}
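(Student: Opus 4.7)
The plan is to reduce the identity to the already-proven Conformality Lemma~\ref{lem:cknondeg}(a) by producing a strict Liouville isomorphism
\[
\psi : (X_{f_-},\, e^\epsilon \lambda_{f_-}) \stackrel{\cong}{\longrightarrow} (X_{f_+},\, \lambda_{f_+}).
\]
Given such a $\psi$, the fact that $CH^L$ (and hence $c_k$) is intrinsic to the Liouville domain structure will give $c_k(X_{f_+},\lambda_{f_+}) = c_k(X_{f_-}, e^\epsilon \lambda_{f_-})$, and Lemma~\ref{lem:cknondeg}(a) then yields the stated scaling identity. Nondegeneracy falls out for free: under the canonical identifications $\partial X_{f_\pm}\simeq Y$, the induced contact forms are $e^{f_\pm}(\lambda|_Y)$, and these differ by the positive scalar $e^\epsilon$, under which nondegeneracy is preserved.

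To build $\psi$, I would pass to the common completion $(\widehat{X}, \widehat{\lambda})$ of the two Liouville domains and take $\psi := \phi_\epsilon|_{X_{f_-}}$, where $\phi_\epsilon$ is the time-$\epsilon$ flow of the Liouville vector field $V$. On the cylindrical region $\R\times Y\subset\widehat{X}$ one has $V=\partial_\rho$, so the flow is translation $(\rho,y)\mapsto(\rho+\epsilon,y)$, which evidently carries $X_{f_-}=\{\rho\le f_-(y)\}$ bijectively onto $\{\rho\le f_-(y)+\epsilon\}=X_{f_+}$. The key algebraic input, valid globally on $\widehat{X}$, is the tautology
\[
\imath_V\widehat{\lambda}\;=\;\imath_V\imath_V\, d\widehat{\lambda}\;=\;0
\]
(antisymmetry of the 2-form $d\widehat{\lambda}$); by Cartan's formula this gives $\mathcal{L}_V\widehat{\lambda}=\widehat{\lambda}$ everywhere, not just on the cylindrical end, and therefore $\phi_\epsilon^*\widehat{\lambda}=e^\epsilon\widehat{\lambda}$ on all of $\widehat{X}$. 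Restricting then gives $\psi^*\lambda_{f_+}=e^\epsilon\lambda_{f_-}$ as required.

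The one point demanding care, and the main (albeit mild) obstacle, is that $\phi_\epsilon$ must exist on all of $X_{f_-}$, including when $\epsilon<0$. I would handle this by decomposing $\widehat{X}$ as the union of the cylindrical region $\R\times Y$, on which $\phi_\epsilon$ is pure translation and hence complete for all $\epsilon\in\R$, and the skeleton $\widehat{X}\setminus(\R\times Y)=\bigcap_{s\ge 0}\phi_{-s}(X)$, which is closed in the compact set $X$ (hence compact) and $\phi$-invariant, so that the restricted flow there is automatically complete. Piecing these pieces together shows $\phi_\epsilon$ is defined globally on $\widehat{X}$ for every $\epsilon\in\R$, completing the construction of $\psi$.
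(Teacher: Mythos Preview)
Your argument is correct and follows essentially the same route as the paper: both use the time-$\epsilon$ flow of the Liouville vector field on $\widehat{X}$ to produce a diffeomorphism $X_{f_-}\to X_{f_+}$ pulling back $\lambda_{f_+}$ to $e^\epsilon\lambda_{f_-}$, and then invoke the Conformality property Lemma~\ref{lem:cknondeg}(a). The paper's proof is a terse two-line version of yours; your additional care with completeness of $\phi_\epsilon$ (which the paper leaves implicit) and the explicit Cartan computation $\mathcal{L}_V\widehat{\lambda}=\widehat{\lambda}$ are reasonable elaborations but not new ideas.
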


\begin{proof}
We have that $(X_{f_+},\lambda_{f_+})$ is nondegenerate because scaling the contact form on the boundary by a constant (in this case $e^\epsilon$) scales the Reeb vector field and preserves nondegeneracy.

The time $\epsilon$ flow of the Liouville vector field $V$ on $\widehat{X}$ restricts to a diffeomorphism $X_{f_-}\to X_{f_+}$ which pulls back $\lambda_{f_+}$ to $e^\epsilon \lambda_{f_-}$.  It follows that
\[
c_k(X_{f_+},\lambda_{f_+}) = c_k(X_{f_-},e^\epsilon\lambda_{f_-}) = e^\epsilon c_k(X_{f_-},\lambda_{f_-}),
\]
where the second equality holds by the conformality in Lemma~\ref{lem:cknondeg}(a).
\end{proof}

\begin{proof}[Proof of Lemma~\ref{lem:makessense}.]
(a) If $f_-,f_+:Y\to\R$ satisfy $f_- < f_+$, then inclusion defines a Liouville embedding $\varphi:(X_{f_-},\lambda_{f_-})\to (X_{f_+},\lambda_{f_+})$ with $\varphi(X_{f_-})\subset \op{int}(X_{f_+})$. It then follows from the monotonicity in Lemma~\ref{lem:cknondeg}(c) that
\[
c_k(X_{f_-},\lambda_{f_-}) \le c_k(X_{f_+},\lambda_{f_+}).
\]
This shows that the left hand side of \eqref{eqn:ckarb} is less than or equal to the right hand side.

To prove the reverse inequality, for any $\epsilon>0$ we can find a function $f_+:Y\to(0,\epsilon)$ such that the contact form $e^{f_+}(\lambda|_Y)$ is nondegenerate. Now define $f_-:Y\to(-\epsilon,0)$ by $f_-=f_+-\epsilon$. By Lemma~\ref{lem:conftrick} we have
\[
c_k(X_{f_+},\lambda_{f_+}) = e^\epsilon c_k(X_{f_-},\lambda_{f_-}).
\]
It follows that
\[
\inf_{f_+ > 0} c_k(X_{f_+},\lambda_{f_+}) \le e^\epsilon \sup_{f_- < 0}c_k(X_{f_-},\lambda_{f_-}).
\]
Since $\epsilon>0$ was arbitrary, we conclude that the right hand side of \eqref{eqn:ckarb} is less than or equal to the left hand side.

(b) In this case, for any $\epsilon>0$ we can take $f_\pm=\pm\epsilon$ in \eqref{eqn:ckarb}, so using Lemma~\ref{lem:conftrick} we have
\[
\begin{split}
\sup_{f_- < 0}c_k(X_{f_-},\lambda_{f_-}) &\ge c_k(X_{-\epsilon},\lambda_{-\epsilon}) = e^{-\epsilon} c_k(X,\lambda),\\
\inf_{f_+ > 0} c_k(X_{f_+},\lambda_{f_+}) &\le c_k(X_\epsilon,\lambda_\epsilon) = e^{\epsilon} c_k(X,\lambda).
\end{split}
\]
Taking $\epsilon\to 0$, we obtain
\[
\sup_{f_- < 0}c_k(X_{f_-},\lambda_{f_-}) \ge  c_k(X,\lambda) \ge
\inf_{f_+ > 0} c_k(X_{f_+},\lambda_{f_+}).
\]
The result now follows from the first half of part (a).
\end{proof}

\begin{proposition}
\label{prop:liouville}
The function $c_k$ of Liouville domains satisfies the Conformality, Increasing, Restricted Monotonicity, and Contractible Reeb Orbit axioms in Theorem~\ref{thm:liouville}.
\end{proposition}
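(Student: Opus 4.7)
All four axioms will be reduced to the nondegenerate case, handled by Lemma~\ref{lem:cknondeg}, via the sup/inf characterization in Definition~\ref{def:ckarb}.

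\textbf{Conformality and Increasing.} Inspecting the construction in \S\ref{sec:ald} shows that $(X,r\lambda)_f = (X_f, r\lambda_f)$, and the contact form $re^f\lambda|_Y$ is nondegenerate iff $e^f\lambda|_Y$ is. So $c_k(X,r\lambda)$ and $c_k(X,\lambda)$ are sup/inf over the same set of perturbations $f$, and Lemma~\ref{lem:cknondeg}(a) gives $c_k(X_f, r\lambda_f) = rc_k(X_f,\lambda_f)$ termwise; taking the supremum yields Conformality. Increasing follows similarly by applying Lemma~\ref{lem:cknondeg}(b) termwise to the supremum over $f_- < 0$.

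\textbf{Restricted Monotonicity.} Let $\varphi:(X,\lambda)\to(X',\lambda')$ be a generalized Liouville embedding, and let $f_-:Y\to(-\infty,0)$ and $f_+':Y'\to(0,\infty)$ be nondegenerate perturbations. The composition
\[
\psi:\ X_{f_-} \hookrightarrow X \xrightarrow{\varphi} X' \hookrightarrow X'_{f_+'}
\]
has $\psi(X_{f_-}) \subset \op{int}(X'_{f_+'})$, since $X_{f_-}\subset\op{int}(X)$ for $f_-<0$ and $X'\subset\op{int}(X'_{f_+'})$ for $f_+'>0$. Moreover $\psi^*\lambda'_{f_+'}-\lambda_{f_-}$ on $X_{f_-}$ is the restriction of the closed $1$-form $\varphi^*\lambda'-\lambda$ on $X$; since the inclusion $\partial X_{f_-}\hookrightarrow X$ is homotopic through the Liouville flow to $\partial X\hookrightarrow X$, the hypothesis $[(\varphi^*\lambda'-\lambda)|_{\partial X}]=0$ in $H^1(\partial X;\R)$ implies $[(\psi^*\lambda'_{f_+'}-\lambda_{f_-})|_{\partial X_{f_-}}]=0$ in $H^1(\partial X_{f_-};\R)$, so $\psi$ is a generalized Liouville embedding. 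Lemma~\ref{lem:cknondeg}(c) then yields $c_k(X_{f_-},\lambda_{f_-}) \le c_k(X'_{f_+'},\lambda'_{f_+'})$; taking sup over $f_-$, inf over $f_+'$, and using Lemma~\ref{lem:makessense}(a) finishes the axiom.

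\textbf{Contractible Reeb Orbits.} Assume $L := c_k(X,\lambda) < \infty$. For each $\epsilon > 0$, pick nondegenerate $g_\epsilon:Y\to (-\infty,0)$ with $c_k(X_{g_\epsilon},\lambda_{g_\epsilon}) > L-\epsilon$, and then construct nondegenerate $f_n\to 0$ in $C^\infty$ with $f_n < 0$ and $f_n \ge g_{1/n}$ pointwise; by monotonicity (Lemma~\ref{lem:cknondeg}(c)) and genericity of nondegeneracy for small perturbations, this is possible, and $c_k(X_{f_n},\lambda_{f_n}) \to L$. Lemma~\ref{lem:cknondeg}(d) furnishes Reeb orbits $\gamma_n$ of $e^{f_n}\lambda|_Y$ on $\partial X_{f_n}\cong Y$, contractible in $X_{f_n}$, with periods converging to $L$. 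Since the Reeb vector fields of $e^{f_n}\lambda|_Y$ converge to that of $\lambda|_Y$ in $C^\infty$, Arzela-Ascoli produces a $C^\infty$-subsequential limit $\gamma$, a Reeb orbit of $\lambda|_Y$ of action $L$, contractible in $X$ as a $C^0$-limit of contractible loops.

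\textbf{Main obstacle.} The subtlest axiom is Contractible Reeb Orbits, which needs two ingredients beyond Lemma~\ref{lem:cknondeg}: a construction of nondegenerate $f_n\to 0$ in $C^\infty$ with $c_k(X_{f_n},\lambda_{f_n})\to L$, combining Baire-genericity of nondegeneracy with the monotonicity of $c_k$ in $f$; and a compactness argument on the loop space extracting a genuine Reeb orbit in the limit, which is standard once the Reeb vector fields converge smoothly, but requires that the $f_n$ truly go to zero (not merely have $c_k$ converging to $L$).
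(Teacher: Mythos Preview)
Your proof is correct and follows the same strategy as the paper's own argument, which is quite terse: the paper disposes of Conformality and Increasing in one line, proves Restricted Monotonicity exactly as you do (though without your explicit verification that $\psi$ is a generalized Liouville embedding, which is a nice addition), and for Contractible Reeb Orbits simply writes ``follows from the corresponding property in Lemma~\ref{lem:cknondeg}(d) and a compactness argument.'' You are filling in the details of that compactness argument.

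One small simplification for the last axiom: the auxiliary condition $f_n \ge g_{1/n}$ is unnecessary and makes the construction look more delicate than it is. Once you have any sequence $f_n \to 0$ in $C^\infty$ with $f_n < 0$ and $e^{f_n}\lambda|_Y$ nondegenerate (which exists by $C^\infty$-genericity of nondegeneracy), the convergence $c_k(X_{f_n},\lambda_{f_n}) \to L$ follows automatically: the upper bound is the sup characterization, and for the lower bound fix $\epsilon>0$, choose a single nondegenerate $g<0$ with $c_k(X_g,\lambda_g) > L-\epsilon$, note that $g \le -\delta$ on the compact $Y$ for some $\delta>0$, and then $f_n > g$ for $n$ large, whence monotonicity gives $c_k(X_{f_n},\lambda_{f_n}) \ge c_k(X_g,\lambda_g) > L-\epsilon$. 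This avoids having to juggle the dependence of $g_{1/n}$ on $n$.
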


\begin{proof}
The Conformality and Increasing axioms follow immediately from the corresponding properties in Lemma~\ref{lem:cknondeg}(a),(b).

To prove the Restricted Monotonicity property, suppose that there exists a generalized Liouville embedding $\varphi:(X,\lambda) \to (X',\lambda')$. Let $f_-:\partial X\to (-\infty,0)$ and $f_+:\partial X'\to (0,\infty)$ be smooth functions such that $(X_{f_-},\lambda_{f_-})$ and $(X'_{f_+},\lambda'_{f_+})$ are nondegenerate. Then we can restrict $\varphi$ to $X_{f_-}$, and compose with the inclusion $X'\to X'_{f_+}$, to obtain a generalized Liouville embedding $\widetilde{\varphi}: (X_{f_-},\lambda_{f_-})\to (X'_{f_+},\lambda'_{f_+})$ with $\widetilde{\varphi}(X_{f_-})\subset \op{int}(X'_{f_+})$. By the monotonicity in Lemma~\ref{lem:cknondeg}(c), we have
\[
c_k(X_{f_-},\lambda_{f_-}) \le c_k(X'_{f_+},\lambda'_{f_+}).
\]
It follows that
\[
\sup_{f_-<0}c_k(X_{f_-},\lambda_{f_-}) \le \inf_{f_+>0}c_k(X'_{f_+},\lambda'_{f_+}),
\]
which means that $c_k(X,\lambda) \le c_k(X',\lambda')$.

The Contractible Reeb Orbit axiom follows from the corresponding property in Lemma~\ref{lem:cknondeg}(d) and a compactness argument.
\end{proof}

\subsection{Nice star-shaped domains}

We now study $c_k$ of nice star-shaped domains and complete the proofs of Theorems~\ref{thm:starshaped} and \ref{thm:liouville}.

\begin{proof}[Proof of Theorems~\ref{thm:starshaped} and \ref{thm:liouville}.]
By Proposition~\ref{prop:liouville}, it is enough to show the functions $c_k$, restricted to nice star-shaped domains, satisfy the axioms in Theorem~\ref{thm:starshaped}. The Conformality and Increasing axioms follow immediately from the corresponding properties in Proposition~\ref{prop:liouville}. The Monotonicity axiom in Theorem~\ref{thm:starshaped} follows from the Restricted Monotonicity axiom in Proposition~\ref{prop:liouville}, because if $X$ and $X'$ are nice star-shaped domains in $\R^{2n}$, then any symplectic embedding $X\to X'$ is automatically a generalized Liouville embedding since $H_1(\partial X)=0$. Finally, the Reeb Orbits axiom follows from Lemma~\ref{lem:nssd}(b) below.
\end{proof}

\begin{lemma}
\label{lem:nssd}
Let $X$ be a nice star-shaped domain in $\R^{2n}$. Suppose that $\lambda_0|_{\partial X}$ is nondegenerate. Then:
\begin{description}
\item{(a)}
$c_k(X,\lambda_0)$ is the infimum over $L$ such that the degree $n-1+2k$ summand in $CH(X,\lambda_0)$ is in the image of the map $\imath_L:CH^L(X,\lambda_0)\to CH(X,\lambda_0)$.
\item{(b)}
$c_k(X,\lambda_0)=\mc{A}(\gamma)$ for some Reeb orbit $\gamma$ of $\lambda_0|_{\partial X}$ with $\CZ(\gamma)=n-1+2k$.
\end{description}
\end{lemma}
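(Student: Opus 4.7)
The plan is to prove (a) by direct translation of Definition~\ref{def:cknondeg} using the graded rigidity of $CH(X,\lambda_0)$ spelled out in the Star-Shaped Domains axiom, and to deduce (b) from (a) and part (iv) of that axiom by a standard discreteness argument, much as in the proof of Lemma~\ref{lem:cknondeg}(d).

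For (a), I begin by noting that $X$ is contractible, so every loop in $X$ is contractible and only the trivial free homotopy class contributes to $CH(X,\lambda_0)$. By property (i), $CH(X,\lambda_0)$ is concentrated in degrees $n+1, n+3, \ldots$, each a one-dimensional $\Q$-vector space. By (iii), the operator $U^{k-1}$ restricts to an isomorphism $CH_{n-1+2k}(X,\lambda_0) \xrightarrow{\simeq} CH_{n+1}(X,\lambda_0)$, since each of the $k-1$ applications of $U$ occurs in a degree strictly greater than $n+1$; by (ii) with $k=1$, $\delta$ then sends $CH_{n+1}(X,\lambda_0)$ isomorphically onto $H_{2n}(X,\partial X;\Q)\tensor H_0(BS^1;\Q)$. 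Let $\beta\in CH_{n-1+2k}(X,\lambda_0)$ be the unique element with $\delta U^{k-1}\beta = [X]\tensor[\op{pt}]$. For $\alpha\in CH^L(X,\lambda_0)$, decompose $\imath_L\alpha = \sum_{j\ge 0}\bar\alpha_j$ with $\bar\alpha_j\in CH_{n+1+2j}(X,\lambda_0)$. Applying (iii), $U^{k-1}\bar\alpha_j = 0$ for $j<k-1$, while $U^{k-1}\bar\alpha_j\in CH_{n+1+2(j-k+1)}(X,\lambda_0)$ for $j\ge k-1$; by (ii), $\delta U^{k-1}\bar\alpha_j$ lies in $H_{2n}(X,\partial X;\Q)\tensor H_{2(j-k+1)}(BS^1;\Q)$, and these subspaces are linearly independent for distinct $j\ge k-1$. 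Hence $\delta U^{k-1}\imath_L\alpha = [X]\tensor[\op{pt}]$ forces $\bar\alpha_{k-1} = \beta$, and in particular $\beta$ lies in the image of $\imath_L$. Conversely, if $\beta$ is in the image of the restriction of $\imath_L$ to degree $n-1+2k$, then any preimage $\alpha$ in that degree satisfies $\delta U^{k-1}\imath_L\alpha = \delta U^{k-1}\beta = [X]\tensor[\op{pt}]$. Since $\imath_L$ preserves the grading, the two conditions agree, giving (a).

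For (b), set $c = c_k(X,\lambda_0)$, which is finite by the Increasing axiom. Suppose for contradiction that no Reeb orbit $\gamma$ of $\lambda_0|_{\partial X}$ satisfies both $\mc{A}(\gamma)=c$ and $\CZ(\gamma)=n-1+2k$. By nondegeneracy the spectrum of Reeb actions is discrete, so we can choose $\epsilon>0$ such that no Reeb orbit $\gamma$ with $\CZ(\gamma)=n-1+2k$ has action in $(c-\epsilon,c+\epsilon)$. By property (iv), the map $\imath_{c+\epsilon/2,c-\epsilon/2}:CH_{n-1+2k}^{c-\epsilon/2}(X,\lambda_0)\to CH_{n-1+2k}^{c+\epsilon/2}(X,\lambda_0)$ is surjective. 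Using (a) together with $c+\epsilon/2>c$, the generator $\beta$ is in the image of $\imath_{c+\epsilon/2}$, so pick $\alpha\in CH_{n-1+2k}^{c+\epsilon/2}$ with $\imath_{c+\epsilon/2}\alpha=\beta$. Surjectivity then yields $\alpha'\in CH_{n-1+2k}^{c-\epsilon/2}$ with $\imath_{c+\epsilon/2,c-\epsilon/2}\alpha'=\alpha$, whence $\imath_{c-\epsilon/2}\alpha' = \imath_{c+\epsilon/2}\imath_{c+\epsilon/2,c-\epsilon/2}\alpha' = \beta$. Applying (a) again, $c_k(X,\lambda_0)\le c-\epsilon/2<c$, the desired contradiction.

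The only point requiring real care is the degree bookkeeping in (a): one must verify that the pieces $\delta U^{k-1}\bar\alpha_j$ for different values of $j-k+1\ge 0$ land in mutually independent bigraded components of $H_*(X,\partial X;\Q)\tensor H_*(BS^1;\Q)$, so that the equation $\delta U^{k-1}\imath_L\alpha = [X]\tensor[\op{pt}]$ decouples into a single nonzero constraint on the degree-$(n-1+2k)$ component. Once this is arranged, both parts follow directly from the axioms in Proposition~\ref{prop:ch}.
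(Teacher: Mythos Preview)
Your proof is correct and follows essentially the same approach as the paper, which simply says that (a) ``follows immediately from the definition of $c_k$ and the Star-Shaped Domains property'' and that (b) follows ``similarly to the proof of Lemma~\ref{lem:cknondeg}(d).'' You have filled in precisely the details the paper omits: for (a), the graded decomposition of $\imath_L\alpha$ together with properties (i)--(iii) to identify the condition $\delta U^{k-1}\imath_L\alpha=[X]\otimes[\op{pt}]$ with hitting the generator in degree $n-1+2k$; for (b), the use of property (iv) rather than the coarser ``Reeb Orbits'' property, which is exactly what is needed to pin down the Conley--Zehnder index of the orbit.
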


\begin{proof}
(a) This follows immediately from the definition of $c_k$ and the Star-Shaped Domains property in Proposition~\ref{prop:ch}.

(b) This follows from (a), similarly to the proof of Lemma~\ref{lem:cknondeg}(d).
\end{proof}

\begin{remark}
If one is only interested in nice star-shaped domains, then one can take the characterization of $c_k$ in Lemma~\ref{lem:nssd}(a) as the definition of $c_k$.
\end{remark}

\section{Definition of positive $S^1$-equivariant SH}
\label{sec:equivariantsh}

Our remaining goal is to prove Propositions~\ref{prop:ch} and \ref{prop:transfer}. We now review what we need to know about positive $S^1$-equivariant symplectic homology for this purpose.

(Positive) symplectic homology was developed by Viterbo \cite{V}, using works of Cieliebak, Floer, and Hofer \cite{FH,CFH}. The $S^1$-equivariant version of (positive) symplectic homology was originally defined by Viterbo \cite{V}, and an alternate definition using family Floer homology was given by Bourgeois-Oancea \cite[\S2.2]{bo}, following a suggestion of Seidel \cite{Seidel}. We will use the family Floer homology definition here, because it is more amenable to computations. We follow the treatment in \cite{gutt}, with some minor tweaks which do not affect the results.

We will only consider (positive, $S^1$-equivariant) symplectic homology for Liouville domains, even though it can be defined for more general compact symplectic manifolds with contact-type boundary. We restrict to Liouville domains in order to be able to define transfer morphisms.

\subsection{Symplectic homology}

Let $(X,\lambda)$ be a Liouville domain with boundary $Y$. Let $R_\lambda$ denote the Reeb vector field associated to $\lambda$ on $Y$. Below, let $\Spec(Y,\lambda)$ denote the set of periods of Reeb orbits, and let $\epsilon=\frac{1}{2}\min\Spec(Y,\lambda)$.

Recall from \S\ref{sec:ald} that the completion $(\widehat{X},\widehat{\lambda})$ of $(X,\lambda)$ is defined by
\[
\widehat{X}:=X\cup\bigl([0,\infty)\times Y\bigr) \quad\textrm{and}\quad\widehat{\lambda}:=
\begin{cases}
	\lambda &\textrm{on } X,\\
	e^\rho \lambda|_Y &\textrm{on }[0,\infty)\times Y
\end{cases}
\]
where $\rho$ denotes the $[0,\infty)$ coordinate. Write $\widehat{\omega}=d\widehat{\lambda}$. Consider a $1$-periodic Hamiltonian on $\widehat{X}$, i.e.\ a smooth function
\[
H:S^1\times\widehat{X}\longrightarrow\R
\]
where $S^1=\R/\Z$. Such a function $H$ determines a vector field $X_H^\theta$ on $\widehat{X}$ for each $\theta\in S^1$, defined by $\widehat{\omega}(X_H^\theta,\cdot) = dH(\theta,\cdot)$. Let $\Per(H)$ denote the set of $1$-periodic orbits of $X_H$, i.e.\ smooth maps $\gamma:S^1\to\widehat{X}$ satisfying the equation $\gamma'(\theta) = X_H^\theta\big(\gamma(\theta)\big)$.

\begin{definition}
\label{def:Hstd}
An {\bf admissible Hamiltonian\/} is a smooth function $H : S^{1}\times\widehat{X} \rightarrow \R$ satisfying the following conditions:
\begin{description}
\item{(1)}
The restriction of $H$ to $S^1\times X$ is negative, autonomous (i.e.\ $S^1$-independent), and $C^2$-small (so that there are no non-constant 1-periodic orbits). Furthermore,
\begin{equation}
\label{eqn:HSpec}
H > -\epsilon
\end{equation}
on $S^1\times X$.
\item{(2)}
There exists $\rho_{0} \geq 0$ such that on $S^1 \times [\rho_0,\infty) \times Y$ we have
\begin{equation}
\label{eqn:limitingslope}
H(\theta,\rho,y) = \beta e^{\rho} + \beta'
\end{equation}
with $0<\beta\notin\Spec(Y,\lambda)$  and $\beta'\in\R$. The constant $\beta$ is called the {\bf limiting slope\/} of $H$.
\item{(3)}
There exists a small, strictly convex, increasing function $h:[1,e^{\rho_0}]\to\R$ such that on $S^1\times[0,\rho_0]\times Y$, the function $H$ is $C^{2}$-close to the function sending $(\theta,\rho,x)\mapsto h(e^\rho)$. The precise sense of ``small'' and ``close'' that we need here is explained in Remarks~\ref{rem:orbitsofHstand} and \ref{rem:hsmall}.
\item{(4)}
The Hamiltonian $H$ is nondegenerate, i.e.\ all $1$-periodic orbits of $X_{H}$ are nondegenerate.
\end{description}
We denote the set of admissible Hamiltonians by $\Hstd$.
\end{definition}

\begin{remark}
\label{rem:orbitsofHstand}
Condition (1) implies that the only $1$-periodic orbits of $X_H$ in $X$ are constants; they correspond to critical points of $H$.

The significance of condition (2) is as follows. On $S^{1}\times [0,\infty)\times Y$, for a Hamiltonian of the form $H_1(\theta,\rho,y)=h_1(e^{\rho})$, we have
\[
X_{H_1}^{\theta}(\rho,y) = -h'_1(e^{\rho})R_{\lambda}(y).
\]
Hence for such a Hamitonian $H_1$ with $h_1$ increasing, a $1$-periodic orbit of $X_{H_1}$ maps to a level $\{\rho\}\times Y$, and the image of its projection to $Y$ is the image of a (not necessarily simple) periodic Reeb orbit of period $h'_1(e^{\rho})$. In particular, condition (2) implies that there is no $1$-periodic orbit of $X_H$ in $[\rho_0,\infty) \times Y$.

Condition (3) ensures that for any non-constant $1$-periodic orbit $\gamma_H$ of $X_H$, there exists a (not necessarily simple) periodic Reeb orbit $\gamma$ of period $T<\beta$ such that the image of $\gamma_H$ is close to the image of $\gamma$ in $\{ \rho \}\times  Y$ where  $T=h'(e^{\rho})$.
\end{remark}

\begin{definition}
\label{def:admJ}
An $S^1$-family of almost complex structures $J:S^1\to\End(T\widehat{X})$ is {\bf admissible\/} if it satisfies the following conditions:
\begin{itemize}
\item
$J^\theta$ is $\widehat{\omega}$-compatible for each $\theta\in S^1$.
\item
There exists $\rho_1\ge 0$ such that on $[\rho_1,\infty)\times Y$, the almost complex structure $J^\theta$ does not depend on $\theta$, is invariant under translation of $\rho$, sends $\xi$ to itself compatibly with $d\lambda$, and satisfies
\begin{equation}
\label{eqn:JReeb}
J^\theta (\partial_{\rho})=R_{\lambda}.
\end{equation}
\end{itemize}
We denote the set of all admissible $J$ by $\mathcal{J}$.
\end{definition}

Given $J\in\mathcal{J}$, and $\gamma_-,\gamma_+\in\Per(H)$, let $\widehat{\M}(\gamma_-,\gamma_+;J)$ denote the set of maps
\[
u: \R\times S^1\longrightarrow \widehat{X}
\]
satisfying Floer's equation
\begin{equation}
\label{eqn:Floer}
\frac{\partial u}{\partial s}(s,\theta) + J^{\theta}\bigl(u(s,\theta)\bigr)\biggl(\frac{\partial u}{\partial \theta}(s,\theta) - X_H^{\theta}\bigl(u(s,\theta)\bigr)\biggr)=0
\end{equation}
as well as the asymptotic conditions
\[
\lim_{s\to\pm\infty}u(s,\cdot) = \gamma_\pm.
\]
If $J$ is generic and $u\in\M(\gamma_-,\gamma_+;J)$, then $\widehat{\M}(\gamma_-,\gamma_+;J)$ is a manifold near $u$ whose dimension is the Fredholm index of $u$ defined by
\[
\op{ind}(u) = \op{CZ}_\tau(\gamma_+) - \op{CZ}_\tau(\gamma_-).
\]
Here $\op{CZ}_\tau$ denotes the Conley-Zehnder index computed using trivializations $\tau$ of $\gamma_\pm^{\star}T\widehat{X}$ that extend to a trivialization of $u^{\star}T\widehat{X}$. Note that $\R$ acts on $\widehat{\M}(\gamma_-,\gamma_+;J)$ by translation of the domain; we denote the quotient by $\M(\gamma_-,\gamma_+;J)$.

\begin{definition}	
Let $H\in\Hstd$, and let $J\in\mathcal{J}$ be generic.
Define the Floer chain complex $(CF(H,J),\partial)$ as follows. The chain module $CF(H,J)$ is the free $\Q$-module\footnote{It is also possible to use $\Z$ coefficients here, but we will use $\Q$ coefficients in order to later establish the Reeb Orbits property in Proposition~\ref{prop:ch}, which leads to the Reeb Orbits property of the capacities $c_k$. In special cases when the Conley-Zehnder index of a $1$-periodic orbit is unambiguously defined, for example when all $1$-periodic orbits are contractible and $c_1(TX)|_{\pi_2(X)}=0$, the chain complex is graded by minus the Conley-Zehnder index.} generated by the set of $1$-periodic orbits $\Per(H)$. If $\gamma_-,\gamma_+\in\Per(H)$, then the coefficient of $\gamma_+$ in $\partial\gamma_-$ is obtained by counting Fredholm index $1$ points in $\M(\gamma_-,\gamma_+;J)$ with signs determined by a system of coherent orientations as in \cite{FH2}. (The chain complexes for different choices of coherent orientations are canonically isomorphic.)
\end{definition}
	
Let $HF(H,J)$ denote the homology of the chain complex $(CF(H,J),\partial)$. Given $H$, the homologies for different choices of generic $J$ are canonically isomorphic to each other, so we can denote this homology simply by $HF(H)$. 

The construction of the above canonical isomorphisms is a special case of the following more general construction. Given two admissible Hamiltonians $H_1,H_2\in\Hstd$, write $H_1\le H_2$ if $H_1(\theta,x)\le H_2(\theta,x)$ for all $(\theta,x)\in S^1\times\widehat{X}$. In this situation, one defines a {\bf continuation morphism\/} $HF(H_1)\to HF(H_2)$ as follows; cf.\ \cite[Thm.\ 4.5]{gutt} and the references therein. Choose generic $J_1,J_2\in\mathcal{J}$ so that the chain complexes $CF(H_i,J_i)$ are defined for $i=1,2$. Choose a generic homotopy $\{(H_s,J_s)\}_{s\in\R}$ such that $H_s$ satisfies equation \eqref{eqn:limitingslope} for some $\beta,\beta'$ depending on $s$; $J_s\in\mathcal{J}$ for each $s\in\R$; $\partial_sH_s\ge 0$; $(H_s,J_s)=(H_1,J_1)$ for $s<<0$; and $(H_s,J_s)=(H_2,J_2)$ for $s>>0$. One then defines a chain map $CF(H_1,J_1)\to CF(H_2,J_2)$ as a signed count of Fredholm index $0$ maps 
$u:\R\times S^1\rightarrow \widehat{X}$ satisfying the equation
\begin{equation}
\label{eq:floerparam}
		\frac{\partial u}{\partial s} + J_s^{\theta}\circ u\Bigl(\frac{\partial u}{\partial \theta} - X^\theta_{H_s}\circ u\Bigr)=0
\end{equation}
and the asymptotic conditions $\lim_{s\to-\infty}u(s,\cdot) = \gamma_1$ and $\lim_{s\to\infty}u(s,\cdot) = \gamma_2$. The induced map on homology gives a well-defined map $HF(H_1)\to HF(H_2)$. If $H_2\le H_3$, then the continuation map $HF(H_1)\to HF(H_3)$ is the composition of the continuation maps $HF(H_1)\to HF(H_2)$ and $HF(H_2)\to HF(H_3)$. 

\begin{definition}
We define the {\bf symplectic homology\/} of $(X,\lambda)$ to be the direct limit 
\[
SH(X,\lambda) := \lim_{\substack{\longrightarrow \\ H\in \mathcal{H}_{\textrm{std}}}}HF(H)
\]
with respect to the partial order $\le$ and continuation maps defined above.
\end{definition}

\subsection{Positive symplectic homology}
\label{sec:psh}

Positive symplectic homology is a modification of symplectic homology in which constant $1$-periodic orbits are discarded.

To explain this, let $H:S^1\times\widehat{X}\rightarrow \R$ be a Hamiltonian in $\Hstd$. The {\bf Hamiltonian action\/} functional $\mathcal{A}_{H}: C^{\infty}(S^{1},\widehat{X})\rightarrow \R$ is defined by
\[
	\mathcal{A}_{H}(\gamma) := -\int_{S^{1}}\gamma^{\star}{\widehat{\lambda}} - \int_{S^{1}}H\bigl(\theta,\gamma(\theta)\bigr)d\theta.
\]
If $J\in\mathcal{J}$, then the differential on the chain complex $(CF(H,J),\partial)$ decreases the Hamiltonian action $\mathcal{A}_H$. As a result, for any $L\in\R$, we have a subcomplex $CF^{\le L}(H,J)$ of $CF(H,J)$, generated by the $1$-periodic orbits with Hamiltonian action less than or equal to $L$. 

To see what this subcomplex can look like, note that the $1$-periodic orbits of $H \in \Hstd$ fall into two classes: (i) constant orbits corresponding to critical points in $X$, and (ii) non-constant orbits contained in $[0,\rho_0]\times Y$.

If $x$ is a critical point of $H$ on $X$, then the action of the corresponding constant orbit is equal to $-H(x)$. By \eqref{eqn:HSpec}, this is less than $\epsilon$.

By Remark~\ref{rem:orbitsofHstand}, a non-constant $1$-periodic orbit of $X_H$ is close to a $1$-periodic orbit of $-h'(e^\rho)R_{\lambda}$ located in $\{ \rho \}\times Y$ for  $\rho\in[0,\rho_0]$ with $h'(e^\rho)\in\Spec(Y,\lambda)$. The Hamiltonian action of the latter loop is given by 
\begin{equation}
\label{eqn:actionsclose}
-\int_{S^1}e^\rho\lambda(-h'(e^\rho)R_{\lambda})d\theta -\int_{S^1}h(e^\rho)d\theta=e^\rho h'(e^\rho)-h(e^\rho).
\end{equation}
Since $h$ is strictly convex, the right hand side is a strictly increasing function of $\rho$.

\begin{remark}
\label{rem:hsmall}
In Definition~\ref{def:Hstd}, we assume that $h$ is sufficiently small so that the right hand side of \eqref{eqn:actionsclose} is close to the period $h'(e^\rho)$, and in particular greater than $\epsilon$. We also assume that $H$ is sufficiently close to $h(e^\rho)$ on $S^1\times[0,\rho_0]\times Y$ so that the Hamiltonian actions of the $1$-periodic orbits are well approximated by the right hand side of \eqref{eqn:actionsclose}, so that:
\begin{description}
\item{(i)} The Hamiltonian action of every 1-periodic orbit of $X_H$ corresponding to a critical point on $X$ is less than $\epsilon$; and the Hamiltonian action of every other $1$-periodic orbit is greater than $\epsilon$.
\item{(ii)}
If $\gamma$ is a Reeb orbit of period $T<\beta$, and if $\gamma'$ is a $1$-periodic orbit of $X_H$ in $[0,\rho_0]\times Y$ associated to $\gamma$, then
\[
|\mc{A}_H(\gamma_1') - T | < \min\left\{\beta^{-1},\tfrac{1}{3}\op{gap}(\beta)\right\}.
\]
Here $\op{gap}(\beta)$ denotes the minimum difference between two elements of $\Spec(Y,\lambda)$ that are less than $\beta$. 
\end{description}
\end{remark}

We can now define positive symplectic homology.

\begin{definition}
\label{def:psh}
	Let $(X,\lambda)$ be a Liouville domain, let $H$ be a Hamiltonian in $\Hstd$, and let $J\in\mathcal{J}$.

Consider the quotient complex
\[
CF^+(H,J) \eqdef \frac{CF(H,J)}{CF^{\le\epsilon}(H,J)}.
\]
The homology of the quotient complex is independent of $J$, so we can denote this homology by $HF^+(H)$. More generally, if $H_1\le H_2$, then the chain map used to define the continuation map $HF(H_1)\to HF(H_2)$ descends to the quotient, since the Hamiltonian action is nonincreasing along a solution of \eqref{eq:floerparam} when the homotopy is nondecreasing. Thus we obtain a well-defined continuation map $HF^+(H_1)\to HF^+(H_2)$ satisfying the same properties as before.

We now define the {\bf positive symplectic homology\/} of $(X,\lambda)$ to be the direct limit
\[
SH^{+}(X,\lambda) := \lim_{{\substack{\longrightarrow\\ {H\in \Hstd}}}} HF^{+}(H).
\]
\end{definition}

Positive symplectic homology can sometimes be better understood using certain special admissible Hamiltonians obtained as follows.

\begin{definition}
\label{sec:perturbationMB}
\label{def:HMB} 
\cite{BOduke}
Let $(X,\lambda)$ be  a Liouville domain. An {\bf admissible Morse-Bott Hamiltonian\/} is an autonomous Hamiltonian $H: \widehat{X}\rightarrow \R$ such that:
\begin{description}
\item{(1)}
The restriction of $H$ to $X$ is a Morse function which is negative and $C^{2}$-small (so that the Hamiltonian vector field has no non-constant $1$-periodic orbits).
\item{(2)}
There exists $\rho_{0} \geq 0$
such that on $[\rho_0,\infty)\times Y$ we have
\[
H(\rho,x) = \beta e^{\rho} + \beta'
\]
with $0<\beta\notin\Spec(Y,\lambda)$  and $\beta'\in\R$.
\item{(3)}
On $[0,\rho_0)\times Y$ we have
\[
H(\rho,x) = h(e^{\rho})
\]
where $h$ is as in Definition~\ref{def:Hstd}, and moreover $h''-h'>0$.
\end{description}
We denote the set of admissible Morse-Bott Hamiltonians by $\mathcal{H}_{\textrm{MB}}$.
\end{definition}

Given $H\in\mathcal{H}_{\textrm{MB}}$, each $1$-periodic orbit of $X_{H}$ is either: (i) a constant orbit corresponding to a critical point of $H$ in $X$, or (ii) a non-constant $1$-periodic orbit, with image in $\{\rho\}\times Y$ for $\rho\in (0,\rho_{0})$, whose projection to $Y$ has the same image as a Reeb orbit of period $e^\rho h'(\rho)$. Since $H$ is autonomous, every Reeb orbit $\gamma$ with period less than $\beta$ gives rise to an $S^{1}$ family of $1$-periodic orbits of $X_{H}$, which we denote by $S_{\gamma}$.

An admissible Morse-Bott Hamiltonian as in Definition~\ref{def:HMB} can be deformed into an admissible Hamiltonian as in Definition~\ref{def:Hstd}, which will be time-dependent and have nondegenerate 1-periodic orbits:
	
\begin{lemma}
\label{lem:orbitesMB}
(\cite[Prop.\ 2.2]{CFHW} and \cite[Lem.\ 3.4]{BOduke})
An admissible Morse-Bott Hamiltonian $H$ can be perturbed to an admissible Hamiltonian $H'$ whose $1$-periodic orbits consist of the following:
\begin{description}
\item{(i)}
Constant orbits at the critical points of $H$.
\item{(ii)}
For each Reeb orbit $\gamma$ with period less than $\beta$, two nondegenerate orbits $\widehat{\gamma}$ and $\widecheck{\gamma}$.
Given a trivialization $\tau$ of $\xi|\gamma$, their Conley-Zehnder indices are given by
$-\CZ_\tau(\widehat{\gamma}) = \CZ_\tau(\gamma)+1$ and $-\CZ_\tau(\widecheck{\gamma}) = \CZ_\tau(\gamma)$.
\end{description}
\end{lemma}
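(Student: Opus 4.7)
The plan is to realize $H'$ as a small, $S^1$-breaking perturbation of the Morse-Bott Hamiltonian $H$, following the scheme of \cite{CFHW, BOduke}. The non-constant $1$-periodic orbits of $X_H$ lie in $\{\rho\}\times Y$ for those $\rho\in(0,\rho_0)$ with $h'(e^{\rho})\in \Spec(Y,\lambda)$ and, because $H$ is autonomous, they form $S^1$-families $S_\gamma$, one for each Reeb orbit $\gamma$ of period less than $\beta$. The convexity condition $h''-h'>0$ in Definition~\ref{def:HMB} ensures that transverse to each $S_\gamma$ the linearization of $X_H$ has only the obvious kernel, so $S_\gamma$ is a transversely nondegenerate Morse-Bott manifold of $1$-periodic orbits.

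I would then construct the perturbation as follows. For each Reeb orbit $\gamma$ with period less than $\beta$, fix a perfect Morse function $f_\gamma : S^1\to \R$ with one maximum and one minimum. Choose pairwise disjoint tubular neighborhoods $N_\gamma$ of the $S_\gamma$ in $\widehat X$, and extend $f_\gamma$ to a smooth (possibly time-dependent) function $\widetilde f_\gamma : S^1\times \widehat X\to \R$ supported in $N_\gamma$ that agrees near $S_\gamma$ with the pullback of $f_\gamma$ under the projection $N_\gamma\to S_\gamma\cong S^1$. Set $H' \eqdef H + \varepsilon \sum_\gamma \widetilde f_\gamma$. For $\varepsilon>0$ sufficiently small, $H'$ still lies in $\Hstd$: the behavior on $[\rho_0,\infty)\times Y$, the negativity on $X$, and the $C^2$-closeness to $h(e^\rho)$ on the annular region are all preserved, and the time-dependence introduced is $C^2$-small.

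Next I would classify the $1$-periodic orbits of $X_{H'}$. Applying the implicit function theorem, reduced along each Morse-Bott family $S_\gamma$, shows that for $\varepsilon$ small every $1$-periodic orbit of $X_{H'}$ inside $N_\gamma$ corresponds to a critical point of $f_\gamma$, producing exactly two orbits $\widehat\gamma$ and $\widecheck\gamma$ (from the max and min, respectively), both nondegenerate. Outside $\bigsqcup_\gamma N_\gamma$, the only other $1$-periodic orbits are the constants at critical points of $H|_X$, which persist and remain nondegenerate since $H|_X$ is Morse and $C^2$-small.

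Finally I would compute the Conley-Zehnder indices. Near a perturbed orbit, the linearization of the Hamiltonian flow preserves a splitting into the direction tangent to $S_\gamma$ and a symplectic complement; the tangent direction contributes the standard Morse-Bott shift (an extra $+\tfrac12$ at a maximum and $-\tfrac12$ at a minimum of $f_\gamma$), while the transverse contribution is computed from the linearized Reeb flow on $\xi|_\gamma$ using the standard relationship between Hamiltonian and Reeb Conley-Zehnder indices. Assembling these half-integer contributions and applying the homological sign convention gives $-\CZ_\tau(\widehat\gamma) = \CZ_\tau(\gamma) + 1$ and $-\CZ_\tau(\widecheck\gamma) = \CZ_\tau(\gamma)$. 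The main obstacle is precisely this half-integer bookkeeping: matching the Morse-Bott perturbation shifts with the transverse index of $S_\gamma$ and with the Conley-Zehnder index of the Reeb orbit $\gamma$, under the paper's sign convention. Once this is carefully tracked, the formulas follow directly from the standard Morse-Bott index calculation as in \cite{CFHW, BOduke}.
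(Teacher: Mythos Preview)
The paper does not give its own proof of this lemma; it is stated with attribution to \cite[Prop.\ 2.2]{CFHW} and \cite[Lem.\ 3.4]{BOduke} and then used as a black box. Your sketch is precisely the standard construction from those references: perturb $H$ by a perfect Morse function on each Morse-Bott circle $S_\gamma$, invoke the implicit function theorem to see that exactly the two critical points survive as nondegenerate $1$-periodic orbits, and read off the Conley-Zehnder shift from the Morse index along the family. So your approach matches the cited literature, which is all the paper relies on.

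One correction worth flagging, since you identify the index bookkeeping as the main obstacle: you assign $\widehat\gamma$ to the maximum and $\widecheck\gamma$ to the minimum of $f_\gamma$, but the remark immediately following the lemma in the paper records the opposite convention, namely $\widehat\gamma = \gamma_{\min}$ and $\widecheck\gamma = \gamma_{\Max}$. With the paper's sign convention (grading by $-\CZ$), it is the orbit at the \emph{minimum} that picks up the extra $+1$. If you carry your half-integer argument through with the labels swapped you will recover the stated formulas; as written your assignment would produce them interchanged.
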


\begin{remark}
The references \cite{CFHW} and \cite {BOduke} use the notation $\gamma_{\m}$ instead of $\widehat{\gamma}$, and $\gamma_{\Max}$ instead of $\widecheck{\gamma}$. The motivation is that these orbits are distinguished in their $S^1$-family as critical points of a perfect Morse function on $S^1$.
\end{remark}

\subsection{$S^1$-equivariant symplectic homology}
\label{SHS1}

Let $(X,\lambda)$ be a Liouville domain with boundary $Y$.
We now review how to define the $S^1$-equivariant symplectic homology $SH^{S^1}(X,\lambda)$, and the positive $S^1$-equivariant symplectic homology $SH^{S^1,+}(X,\lambda)$.

The $S^1$-equivariant symplectic homology $SH^{S^1}(X,\lambda)$ is defined as a limit as $N\to\infty$ of homologies $SH^{S^1,N}(X,\lambda)$, where $N$ is a nonnegative integer. To define the latter, fix the perfect Morse function $f_N:\C P^N\to\R$ defined by
\[
f_N\bigl([w^0:\ldots:w^n]\bigr)=\frac{\sum_{j=0}^Nj|w^j|^2}{\sum_{j=0}^N|w^j|^2}.
\]
Let $\widetilde{f}_N:S^{2N+1}\to\R$ denote the pullback of $f_N$ to $S^{2N+1}$. We will consider gradient flow lines of $\widetilde{f_N}$ and $f_N$ with respect to the standard metric on $S^{2N+1}$ and the metric that this induces on $\C P^N$.

\begin{remark}\label{rmk:periodicity}
The family of functions $f_N$ has the following two properties which are needed below. We have two isometric inclusions $i_0,i_1:\C P^N\to\C P^{N+1}$ defined by $i_0([z_0:\ldots:z_{N}]) = [z_0:\ldots:z_{N}:0]$ and $i_1([z_0:\ldots:z_{N}])= [0:z_0:\ldots:z_{N}]$. Then:
\begin{description}
\item{(1)}
The images of $i_0$ and $i_1$ are invariant under the gradient flow of $f_{N+1}$.
\item{(2)}
We have $f_N=f_{N+1}\circ i_0=f_{N+1}\circ i_1 + \op{constant}$, so that the gradient flow of $f_{N+1}$ pulls back via $i_0$ or $i_1$ to the gradient flow of $f_N$.
\end{description}
\end{remark}

Now choose a ``parametrized Hamiltonian''
\begin{equation}
\label{eqn:Hamparam}
H:S^1\times \widehat{X}\times S^{2N+1}\longrightarrow \R
\end{equation}
which is $S^1$-invariant in the sense that
\[ 
H(\theta+\varphi,x,\varphi z) = H(\theta,x,z)\qquad \forall \theta,\varphi\in S^1=\R/\Z,\; x\in\widehat{X},\; z\in S^{2N+1}.
\]
Here the action of $S^1=\R/\Z$ on $S^{2N+1}\subset \C^{N+1}$ is defined by $\varphi\cdot z=e^{2\pi i\varphi}z$.

\begin{definition}
\label{def:aph}
A parametrized Hamiltonian $H$ as above is {\bf admissible\/} if:
\begin{description}
\item{(i)}
For each $z\in S^{2N+1}$, the Hamiltonian 
\[
H_z = H(\cdot,\cdot,z):S^1\times\widehat{X}\longrightarrow\R
\]
satisfies conditions (1), (2), and (3) in Definition~\ref{def:Hstd}, with $\beta$ and $\beta'$ independent of $z$.
\item{(ii)}
If $z$ is a critical point of $\widetilde{f}_N$, then the $1$-periodic orbits of $H_z$ are nondegenerate.
\item{(iii)} $H$ is nondecreasing along downward gradient flow lines of $\widetilde{f}_N$.
\end{description}
\end{definition}

Let $\Per^{S^1}(\tilde{f}_N,H)$ denote the set of pairs $(z,\gamma)$, where $z\in S^{2N+1}$ is a critical point of $\tilde{f}_N$, and $\gamma$ is a $1$-periodic orbit of the Hamitonian $H_z$.
Note that $S^1$ acts freely on the set $\Per^{S^1}(\tilde{f}_N,H)$ by
\[
\varphi\cdot (z,\gamma) = \big(\varphi\cdot z, \gamma(\cdot - \varphi)\big).
\]
If $p=(z,\gamma)\in\Per^{S^1}(\tilde{f}_N,H)$, let $S_p$ denote the orbit of $(z,\gamma)$ under this $S^1$ action.

Next, choose a generic map
\begin{equation}
\label{eqn:Jparam}
J:S^1\times S^{2N+1} \to \mc{J}, \quad (\theta,z) \mapsto J^\theta_z,
\end{equation}
which is $S^1$-invariant in the sense that
\[
J^{\theta+\varphi}_{\varphi\cdot z} = J^\theta_z
\]
for all $\varphi,\theta\in S^1$ and $z\in S^{2N+1}$. 

Let $p^-=(z^-,\gamma^-)$ and $p^+=(z^+,\gamma^+)$ be distinct elements of $\Per^{S^1}(\tilde{f}_N,H)$. Define $\widehat{\Mod}(S_{p^-},S_{p^+};J)$
to be the set of pairs $(\eta,u)$, where $\eta:\R\to S^{2N+1}$ and $u:\R\times S^1\to\widehat{X}$, satisfying the following equations:
\begin{equation}
\label{eq:fleorparam}
\left\{
\begin{aligned}
\dot{\eta}+\vec{\nabla}\tilde{f}_N(\eta) &=0,\\
\partial_su+J^{\theta}_{\eta(s)}\circ u\bigl(\partial_{\theta}u-X_{H_{\eta(s)}^\theta}\circ u\bigr) &=0,\\
\lim_{s\rightarrow\pm\infty}\bigl(\eta(s),u(s,\cdot)\bigr) &\in S_{p^\pm}.
\end{aligned}
\right.
\end{equation}
Here the middle equation is a modification of Floer's equation \eqref{eqn:Floer} which is ``parametrized by $\eta$''. Note that $\R$ acts on the set $\widehat{\Mod}(S_{p^-},S_{p^+};J)$ by reparametrization: if $\sigma\in\R$, then
\[
\sigma\cdot(\eta,u) = \big(\eta(\cdot-\sigma),u(\cdot-\sigma,\cdot)\big).
\]
In addition, $S^1$ acts on the set $\widehat{\Mod}(S_{p^-}, S_{p^+};J)$ as follows: if $\tau\in S^1$, then
\[
\tau\cdot (\eta,u):=\bigl(\tau\cdot\eta, u(\cdot,\cdot-\tau)\bigr).
\]
Let $\Mod^{S^1}(S_{p^-},S_{p^+};J)$ denote the quotient of the set $\widehat{\Mod}(S_{p^-},S_{p^+};J)$ by these actions of $\R$ and $S^1$.

If $J$ is generic, then $\Mod^{S^1}(S_{p^-},S_{p^+};J)$ is a manifold near $(\eta,u)$ of dimension
\[
\op{ind}(\eta,u) = (\op{ind}(f_N,z^-) - \op{CZ}_\tau(\gamma^-)) - (\op{ind}(f_N,z^+) - \op{CZ}_\tau(\gamma^+)) - 1.
\]
Here $\op{ind}(f_N,z^\pm)$ denotes the Morse index of the critical point $z^\pm$ of $f_N$, and $\op{CZ}_\tau$ denotes the Conley-Zehnder index with respect to a trivialization $\tau$ of ${(\gamma^\pm)}^{\star}T\widehat{X}$ that extends over $u^{\star}T\widehat{X}$.

\begin{definition}
\label{def:complex}
\cite[\S2.2]{bo}
Define a chain complex $\left(CF^{S^1,N}(H,J),\partial^{S^1}\right)$ as follows. The chain module $CF^{S^1,N}(H,J)$ is the free $\Q$ module\footnote{It is also possible to define $SH^{S^1,+}$, using $\Z$ coefficients, as with $SH$.} generated by the orbits $S_p$.
If $S_{p^-},S_{p^+}$ are two such orbits, then the coefficient of $S_{p^+}$ in $\partial^{S^1}S_{p^-}$ is a signed count of elements $(\eta,u)$ of $\Mod^{S^1}(S_{p^-},S_{p^+};J)$ with $\op{ind}(\eta,u)=1$.
\end{definition}

We denote the homology of this chain complex by $HF^{S^1,N}(H)$. This does not depend on the choice of $J$, by the usual continuation argument; one defines continuation chain maps using a modification of \eqref{eq:fleorparam} in which the second line is replaced by an ``$\eta$-parametrized'' version of Floer's continuation equation \eqref{eq:floerparam}.

We now define a partial order on the set of pairs $(N,H)$, where $N$ is a nonnegative integer and $H$ is an admissible parametrized Hamiltonian \eqref{eqn:Hamparam}, as follows. Let $\widetilde{i}_0:S^{2N+1}\to S^{2N+3}$ denote the inclusion sending $z\mapsto (z,0)$. (This lifts the inclusion $i_0$ defined in Remark~\ref{rmk:periodicity}.) Then $(N_1,H_1)\le (N_2,H_2)$ if and only if:
\begin{itemize}
\item
$N_1\le N_2$, and
\item
$H_1 \le (\widetilde{i_0}^{\star})^{N_2-N_1}H_2$ pointwise on $S^1\times\widehat{X}\times S^{2N_1+1}$.
\end{itemize}
In this case we can define a continuation map $HF^{S^1,N_1}(H_1) \to HF^{S^2,N_2}(H_2)$ using an increasing homotopy from $H_1$ to $(\widetilde{i_0}^{\star})^{N_2-N_1}H_2$ on $S^1\times\widehat{X}\times S^{2N_1+1}$.

\begin{definition}
Define the {\bf $S^1$-equivariant symplectic homology\/}
\[
SH^{S^1}_*(X,\lambda) := \lim_{\substack{\longrightarrow\\ N,H}}HF^{S^1,N}_*(H).
\]
\end{definition}

It is sometimes useful to describe $S^1$-equivariant symplectic homology in terms of individual Hamiltonians on $S^1\times \widehat{X}$, rather than $S^{2N+1}$-families of them, by the following procedure.

\begin{remark}
\label{rmk:simplifyingcomplex}
\cite[\S2.1.1]{these}
Fix an admissible Hamiltonian $H': S^1\times \widehat{X}\rightarrow \R$ and a nonnegative integer $N$. Consider a sequence of admissible parametrized Hamiltonians $\{H_k\}_{k=0,\ldots,N}$ as in \eqref{eqn:Hamparam}, where $H_k$ is defined on $S^1\times\widehat{X}\times S^{2k+1}$, with the following properties:
\begin{itemize}
\item
For each $k=0,\ldots,N-1$, the pullbacks $\widetilde{i}_0^{\star}H_{k+1}$ and $\widetilde{i}_1^{\star}H_{k+1}$ agree with $H_k$ up to a constant. Here $\widetilde{i}_1:S^{2k+1}\to S^{2k+3}$ denotes the lift of $i_1$ sending $z\mapsto (0,z)$.
\item
For each $k=0,\ldots,N$ and each $z\in\op{Crit}(\tilde{f}_k)$, we have
\begin{equation}
\label{eqn:HNCrit}
H_k(\theta,x,z)=H'\big(\theta-\phi(z),x\big) + c.
\end{equation}
Here $c$ is a constant depending on $k$ and $z$; and the map $\phi:\op{Crit}(\tilde{f}_k)\to S^1$ sends a critical point $(0,\ldots,0,e^{2\pi i \psi},0,\ldots,0)\mapsto \psi$.
\end{itemize}
Next, choose a sequence of families of almost complex structures $J_k:S^1\times S^{2k+1} \to \mc{J}(\widehat{X})$ for $k=0,\ldots,N$ such that:
\begin{itemize}
\item $J_k$ is generic so that the chain complex $\left(CF^{S^1,k}(H_k,J_k),\partial^{S^1}\right)$ is defined.
\item
$\widetilde{i}_0^{\star}J_{k+1} = \widetilde{i}_1^{\star}J_{k+1} = J_k$.
\end{itemize}

The chain complex $\left(CF^{S^1,N}(H_N,J_N),\partial^{S^1}\right)$ can now be described as follows.
By \eqref{eqn:HNCrit}, we can identify the chain module as
\begin{equation}
\label{eqn:cmsimplified}
CF^{S^1,N}(H_N,J_N) = \Q\{1,u,\ldots,u^N\}\otimes_\Q CF(H',J_0).
\end{equation}
This identification sends a pair $(z,\gamma)$, where $z\in\op{Crit}(\widetilde{f}_N)$ is a lift of an index $2k$ critical point of $f_N$ and $\gamma$ is a reparametrization of a $1$-periodic orbit $\gamma'$ of $H'$, to $u^k\tensor\gamma'$.

Since the sequences $\{H_k\}$ and $\{J_k\}$ respect the inclusions $\widetilde{i}_1$, the differential has the form
\begin{equation}
\label{eqn:partialsimplified}
\partial^{S^1}(u^k\tensor\gamma) = \sum_{i=0}^ku^{k-i}\tensor\varphi_i(\gamma)
\end{equation}
where the operator $\varphi_i$ on $CF(H',J_0)$ does not depend on $k$. In particular, $\varphi_0$ is the differential on $CF(H',J_0)$. We can also formally write
\[
\partial^{S^1} = \sum_{i=0}^Nu^{-i}\tensor\varphi_i
\]
where it is understood that $u^{-i}$ annihilates terms of the form $u^j\tensor\gamma$ with $i>j$.

The usual continuation arguments show that the homology of this chain complex does not depend on the choice of sequences $\{H_k\}$ and $\{J_k\}$ satisfying the above assumptions. We denote this homology by $HF^{S^1,N}(H')$.

Since in the above construction we assume that the sequences $\{H_k\}$ and $\{J_k\}$ respect the inclusions $\widetilde{i}_0$, it follows that when $N_1\le N_2$ we have a well-defined map $HF^{S^1,N_1}(H')\to HF^{S^1,N_2}(H')$ induced by inclusion of chain complexes.

As before, if $H_1'\le H_2'$, then there is a continuation map $HF^{S^1,N}(H_1')\to HF^{S^1,N}(H_2')$ satisfying the usual properties.
\end{remark}

As in \cite[\S2.3]{bo}, we now have:

\begin{proposition}
\label{shortS1}
The $S^1$-equivariant homology of $(X,\lambda)$ is given by
\[
SH_*^{S^1}(X,\lambda) = \lim_{\substack{\longrightarrow \\ N\in\N,\; H'\in\Hstd}} HF^{S^1,N}(H').
\]
\end{proposition}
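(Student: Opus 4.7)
The plan is to prove Proposition~\ref{shortS1} by establishing cofinality between the two directed systems indexing the $S^1$-equivariant symplectic homology: on one side, pairs $(N,H)$ with $H$ an admissible parametrized Hamiltonian on $S^1\times\widehat{X}\times S^{2N+1}$ as in Definition~\ref{def:aph}; on the other, pairs $(N,H')$ with $H'\in\Hstd$ and with $HF^{S^1,N}(H')$ computed via the simplified complex of Remark~\ref{rmk:simplifyingcomplex}. Cofinality then forces the two direct limits to coincide.

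First, I would verify that the data $(H_N,J_N)$ produced by the inductive scheme of Remark~\ref{rmk:simplifyingcomplex} actually define an admissible parametrized Hamiltonian. Conditions (i) and (ii) of Definition~\ref{def:aph} follow immediately from \eqref{eqn:HNCrit}, because at every critical point $z\in\op{Crit}(\widetilde{f}_N)$ the slice $H_N(\cdot,\cdot,z)$ is a time-reparametrization of $H'\in\Hstd$ shifted by a constant, hence admissible and nondegenerate. Condition (iii) — monotonicity along downward gradient flow of $\widetilde{f}_N$ — can be arranged by choosing the additive constants $c$ in \eqref{eqn:HNCrit} to decrease with the Morse index of $z$ and then interpolating monotonically along gradient flow lines between critical points. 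Once this is done, the chain complex in Remark~\ref{rmk:simplifyingcomplex} is canonically identified with the chain complex of Definition~\ref{def:complex} for $(H_N,J_N)$, so its homology is literally a term of the directed system defining $SH^{S^1}_*(X,\lambda)$, yielding a natural map from the right-hand direct limit to the left-hand one.

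For cofinality in the other direction, given any pair $(N,H)$ with $H$ admissible, I would pick $H''\in\Hstd$ whose limiting slope exceeds that of $H$ and which dominates, up to a fixed additive constant, every slice $H(\cdot,\cdot,z)$ on the compact region $S^1\times X\cup S^1\times [0,\rho_0]\times Y$. Then I would build an admissible sequence $\{H_k\}_{k\le N}$ out of $H''$ as in Remark~\ref{rmk:simplifyingcomplex}, obtaining an admissible parametrized Hamiltonian $H_N''$ with $H\le H_N''$ pointwise. The usual continuation argument then produces a morphism $HF^{S^1,N}(H)\to HF^{S^1,N}(H'')$ compatible with the direct-limit structure, so every class in $\lim_{N,H}HF^{S^1,N}(H)$ factors through the simplified system. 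Combined with the first step, this gives an isomorphism of direct limits.

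The main obstacle is the consistent construction of the sequences $\{(H_k,J_k)\}$ used in Remark~\ref{rmk:simplifyingcomplex}, which must simultaneously satisfy (a) compatibility under both isometric inclusions $\widetilde{i}_0$ and $\widetilde{i}_1$ up to constants, (b) equation \eqref{eqn:HNCrit} at every critical point of $\widetilde{f}_k$, and (c) the monotonicity condition (iii) of Definition~\ref{def:aph}. This is done inductively on $k$, starting from the two critical points of $\widetilde{f}_0$ on $S^1$ and extending across the cell structure of $S^{2k+3}\supset S^{2k+1}$; the delicate point is that the constants appearing in \eqref{eqn:HNCrit} are forced, by (c), to be strictly increasing in Morse index of $z$, so one must check this ordering is compatible with the pullbacks under $\widetilde{i}_0$ and $\widetilde{i}_1$. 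This construction is essentially carried out in \cite[\S2.3]{bo} and \cite[\S2.1.1]{these}, and the cofinality argument sketched above then gives the proposition by a routine direct-limit manipulation.
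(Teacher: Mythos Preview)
Your approach is correct and matches the paper's treatment: the paper gives no proof of its own but simply defers to \cite[\S2.3]{bo}, and your cofinality argument between the two directed systems is exactly the content of that reference, which you also cite. One small slip: you first (correctly) say the additive constants $c$ in \eqref{eqn:HNCrit} should decrease with the Morse index of $z$ so that condition (iii) of Definition~\ref{def:aph} holds, but in the final paragraph you reverse this to ``strictly increasing in Morse index''; since downward gradient flow of $\widetilde{f}_N$ goes from higher to lower index and $H$ must be nondecreasing along it, the constants must be larger at lower-index critical points, i.e., decreasing in index as you said first.
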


\subsection{Positive $S^1$-equivariant symplectic homology}
\label{sec:SHS1+}

Like symplectic homology, $S^1$-equivariant symplectic homology also has a positive version in which constant $1$-periodic orbits are discarded.

\begin{definition}
Let $H:S^1\times\widehat{X}\times S^{2N+1}\to\R$ be an admissible parametrized Hamiltonian. The {\bf parametrized action functional\/} $\mc{A}_H:C^{\infty}(S^1,\widehat{X})\times S^{2N+1}\longrightarrow\R$  is defined by
\begin{equation}
\label{eq:paramaction}	\mc{A}_H(\gamma,z):=-\int_{\gamma}\widehat{\lambda}-\int_{S^1}H\bigl(\theta,\gamma(\theta),z\bigr)d\theta.
\end{equation}
\end{definition}

\begin{lemma}
\label{lem:dpa}
If $H$ is an admissible parametrized Hamiltonian, and if $J$ is a generic $S^1$-invariant family of almost complex structures as in \eqref{eqn:Jparam}, then the differential $\partial^{S^1}$ on $CF^{S^1,N}(H,J)$ does not increase the parametrized action \eqref{eq:paramaction}.
\end{lemma}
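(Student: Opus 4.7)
\emph{Proof plan.} Since $\mc{A}_H$ is manifestly invariant under the diagonal $S^1$ action $(\gamma,z)\mapsto(\gamma(\cdot-\varphi),\varphi\cdot z)$ (the loop integral is reparametrization-invariant, and the Hamiltonian term is invariant by the $S^1$-equivariance of $H$), the parametrized action descends to a function on $S^1$-orbits of pairs. Therefore, it suffices to show the following: for any solution $(\eta,u)\in\widehat{\Mod}(S_{p^-},S_{p^+};J)$ of \eqref{eq:fleorparam}, the function $s\mapsto\mc{A}_H\bigl(u(s,\cdot),\eta(s)\bigr)$ is nonincreasing. Granting this, if $S_{p^+}$ appears with nonzero coefficient in $\partial^{S^1}S_{p^-}$, then $\mc{A}_H(p^+)\le\mc{A}_H(p^-)$, which is the desired statement.

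The main step is a direct differentiation. Writing $\gamma_s(\theta)=u(s,\theta)$, Cartan's formula (together with $\int_{S^1}d_\theta(\widehat{\lambda}(\partial_s u))\,d\theta=0$) gives
\[
\frac{d}{ds}\int_{S^1}\gamma_s^{\star}\widehat{\lambda}
=\int_{S^1}\widehat{\omega}(\partial_s u,\partial_\theta u)\,d\theta.
\]
Using $d_x H(\cdot)=\widehat{\omega}(X_{H_z^\theta},\cdot)$ together with the chain rule in the $z$ variable,
\[
\frac{d}{ds}\int_{S^1}H\bigl(\theta,u(s,\theta),\eta(s)\bigr)\,d\theta
=\int_{S^1}\widehat{\omega}(X_{H_{\eta(s)}^\theta},\partial_s u)\,d\theta
+\int_{S^1}(d_zH)(\dot\eta)\,d\theta.
\]
Combining and regrouping,
\[
\frac{d}{ds}\mc{A}_H(\gamma_s,\eta(s))
=-\int_{S^1}\widehat{\omega}\bigl(\partial_s u,\partial_\theta u-X_{H_{\eta(s)}^\theta}\bigr)\,d\theta
-\int_{S^1}(d_zH)(\dot\eta)\,d\theta.
\]

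Both terms on the right are nonpositive. For the first, the parametrized Floer equation in \eqref{eq:fleorparam} rewrites as $\partial_\theta u-X_{H_{\eta(s)}^\theta}=J^\theta_{\eta(s)}\partial_s u$, so by $\widehat{\omega}$-compatibility of $J^\theta_{\eta(s)}$,
\[
\widehat{\omega}\bigl(\partial_s u,\partial_\theta u-X_{H_{\eta(s)}^\theta}\bigr)
=\widehat{\omega}(\partial_s u,J^\theta_{\eta(s)}\partial_s u)
=\bigl|\partial_s u\bigr|^2_{g_{J}}\ge 0.
\]
For the second, $\dot\eta=-\vec{\nabla}\widetilde{f}_N(\eta)$ points in the negative-gradient direction, and condition (iii) in Definition~\ref{def:aph} states precisely that $H$ is nondecreasing along downward gradient lines of $\widetilde{f}_N$, whence $(d_zH)(\dot\eta)\ge 0$ pointwise. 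Integrating over $\theta$ and adding the two nonpositive contributions yields $\frac{d}{ds}\mc{A}_H\le 0$, completing the proof.

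The only potential subtlety is bookkeeping of signs: one must verify the sign convention for the Hamiltonian vector field $(\widehat{\omega}(X_H,\cdot)=dH)$ consistently with the sign convention for $\mc{A}_H$ in \eqref{eq:paramaction}, and check that no boundary term appears in the $\theta$-integration because $\theta\in S^1$. These are routine once the conventions are fixed, and no new analytic input is needed.
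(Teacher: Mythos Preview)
Your proof is correct and is essentially the explicit computation underlying the paper's argument. The paper's proof is more concise: it observes that for a fixed $\eta$, the map $u$ solves an instance of the continuation equation \eqref{eq:floerparam} with $H_s=H_{\eta(s)}$ and $J_s=J_{\eta(s)}$, and that condition (iii) in Definition~\ref{def:aph} makes this homotopy nondecreasing, so the standard action-decreasing property of continuation maps applies. Your direct differentiation of $\mc{A}_H$ along $(\eta,u)$ is exactly the energy identity that justifies this standard fact in the parametrized setting; the two terms you isolate (the $|\partial_s u|^2$ energy and the $d_zH(\dot\eta)$ contribution) are precisely what one gets when unwinding the continuation-map inequality for an $s$-dependent Hamiltonian.
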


\begin{proof}
Given a solution $(\eta,u)$ to the equations \eqref{eq:fleorparam}, one can think of $\eta$ as fixed and regard $u$ as a solution to an instance of equation \eqref{eq:floerparam}, where $J_s$ and $H_s$ in \eqref{eq:floerparam} are determined by $\eta$. By condition (iii) in Definition~\ref{def:aph}, this instance of \eqref{eq:floerparam} corresponds to a nondecreasing homotopy of Hamiltonians. Consequently, the action is nonincreasing along this solution of \eqref{eq:floerparam} as before.
\end{proof}

It follows from Lemma~\ref{lem:dpa} that for any $L\in\R$, we have a subcomplex $CF^{S^1,N,\le L}(H,J)$ of $CF^{S^1,N}(H,J)$, spanned by $S^1$-orbits of pairs $(z,\gamma)$ where $z\in\op{Crit}(\tilde{f}_N)$ and $\gamma$ is a $1$-periodic orbit of $H_z$ with $\mc{A}_H(z,\gamma)\le L$.

As in \S\ref{sec:psh}, if the $S^1$-orbit of $(z,\gamma)$ is a generator of $CF^{S^1,N}(H,J)$, then there are two possibilities: (i) $\gamma$ is a constant orbit corresponding to a critical point of $H_z$ on $X$, and $\mc{A}_H(z,\gamma)<\epsilon$; or (ii) $\gamma$ is close to a Reeb orbit in $\{\rho\}\times Y$ with period $-h'(e^\rho)$, and $\mc{A}_H(z,\gamma)$ is close to this period; in particular $\mc{A}_H(z,\gamma)>\epsilon$.

\begin{definition}
\label{def:positiveequivSH}
Consider the quotient complex
\begin{equation}
\label{eqn:peshquotient}
CF^{S^1,N,+}(H,J) := \frac{CF^{S^1,N}(H,J)}{CF^{S^1,N,\leq\epsilon}(H,J)}.
\end{equation}
As in Definition~\ref{def:psh}, the homology of the quotient complex is independent of $J$, so we can denote this homology by $HF^{S^1,N,+}(H)$; and we have continuation maps $HF^{S^1,N_1,+}(H_1)\to HF^{S^1,N_2,+}(H_2)$ when $(N_1,H_1)\le (N_2,H_2)$. We now define the {\bf positive $S^1$-equivariant symplectic homology\/} by
\begin{equation}
\label{eqn:pesh}
SH^{S^1,+}(X,\lambda) := \lim_{\substack{\longrightarrow\\ {N,H}}}HF^{S^1,N,+}(H).
\end{equation}
\end{definition}

Returning to the situation of Remark \ref{rmk:simplifyingcomplex}, define $HF^{S^1,N,+}(H')$ to be the homology of the quotient of the chain complex \eqref{eqn:cmsimplified} by the subcomplex spanned by $u^k\tensor\gamma$ where $\gamma$ is a critical point of $H'$ in $X$. We then have the following analogue of Proposition \ref{shortS1}:

\begin{proposition}
\label{shortS1+}
The positive $S^1$ equivariant homology of $(X,\lambda)$ is given by
\[
SH^{S^1,+}(X,\lambda) = \lim_{\substack{\longrightarrow \\ N\in\N,\; H'\in\Hstd}} HF^{S^1,N,+}(H').
\]
\end{proposition}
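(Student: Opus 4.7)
The plan is to mirror the proof of Proposition~\ref{shortS1} while tracking the action filtration. The first step is to produce a cofinal subfamily of the directed system in Definition~\ref{def:positiveequivSH}. Given an admissible parametrized Hamiltonian $H$ on $S^1\times\widehat{X}\times S^{2N+1}$, I would exhibit, for any $H'\in\Hstd$ of sufficiently large limiting slope, a sequence $\{H_k\}_{k=0}^N$ of the special form described in Remark~\ref{rmk:simplifyingcomplex} with $H \le H_N$ pointwise, together with compatible almost complex structures $\{J_k\}$. The Hamiltonians $H_k$ can be built by interpolating $S^1$-equivariantly between reparametrizations of $H'$ in a neighborhood of $\op{Crit}(\widetilde{f}_k)$ and more generic data elsewhere, with the additive constants $c_{k,z}$ in \eqref{eqn:HNCrit} made arbitrarily small. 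Monotonicity of continuation maps then shows this subfamily is cofinal, so the limit is unchanged.

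For each such $H_N$, Remark~\ref{rmk:simplifyingcomplex} identifies $CF^{S^1,N}(H_N,J_N)$ with $\Q\{1,u,\ldots,u^N\}\otimes_{\Q} CF(H',J_0)$, where $u^k\otimes \gamma'$ corresponds to the $S^1$-orbit of a pair $(z,\gamma)$ with $z$ a lift of an index-$2k$ critical point of $f_N$ and $\gamma$ a reparametrization of $\gamma'$. A direct calculation using \eqref{eq:paramaction} and \eqref{eqn:HNCrit}, together with reparametrization invariance of $\int \widehat{\lambda}$, gives
\[
\mc{A}_{H_N}(\gamma,z) \;=\; \mc{A}_{H'}(\gamma') - c_{k,z}.
\]
By Remark~\ref{rem:hsmall}, constant $1$-periodic orbits of $H'$ have $\mc{A}_{H'}<\epsilon$, while non-constant $1$-periodic orbits have $\mc{A}_{H'}$ close to a period in $\Spec(Y,\lambda)$, hence bounded below by a definite gap above $\epsilon$. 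Choosing the $c_{k,z}$ smaller than half this gap, I conclude that after possibly enlarging the threshold by a small amount (which does not change the quotient), the subcomplex $CF^{S^1,N,\le\epsilon}(H_N,J_N)$ is identified with $\Q\{1,u,\ldots,u^N\}\otimes_{\Q} CF^{\le\epsilon}(H',J_0)$. Taking quotients identifies the two definitions of $HF^{S^1,N,+}(H')$.

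Finally, continuation maps between pairs $(N_1,H_1') \le (N_2,H_2')$ in the cofinal subfamily can be realized by compatible sequences of monotone parametrized homotopies; the same argument as in Lemma~\ref{lem:dpa} shows that they preserve the action filtration and descend to the quotients. Passing to the direct limit on both sides then yields the stated isomorphism. The main obstacle I expect is in the first step: carefully constructing the sequences $\{H_k\}$ and the monotone homotopies between them so that the additive constants $c_{k,z}$ can be simultaneously controlled across the cofinal family. This is essentially a refinement of the construction already needed to prove Proposition~\ref{shortS1}, and the only substantive new input beyond that result is the verification that the required action-gap estimate above is uniform enough to survive the direct limit.
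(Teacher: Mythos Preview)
Your approach is correct and in fact more detailed than the paper's own treatment: the paper does not give a proof of Proposition~\ref{shortS1+} at all, but simply states it as ``the following analogue of Proposition~\ref{shortS1}'', which in turn is asserted by reference to \cite[\S2.3]{bo}. Your sketch supplies exactly the missing verification, namely that under the identification \eqref{eqn:cmsimplified} the action-filtered subcomplex $CF^{S^1,N,\le\epsilon}(H_N,J_N)$ coincides with $\Q\{1,u,\ldots,u^N\}\otimes CF^{\le\epsilon}(H',J_0)$, so that the two possible definitions of $HF^{S^1,N,+}(H')$ agree.

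Your identification of the ``main obstacle'' is also accurate, and it is worth recording why it can be overcome. The constants $c_{k,z}$ must increase along downward gradient flow lines of $\widetilde{f}_N$ because condition~(iii) of Definition~\ref{def:aph} forces $H_N$ to be nondecreasing there, while at distinct critical points $H_N$ equals $\theta$-reparametrizations of $H'$ that are not pointwise comparable. The required growth of the $c_{k,z}$ is governed by the $\theta$-oscillation of $H'$; since conditions~(1) and~(3) of Definition~\ref{def:Hstd} make $H'$ autonomous on $X$ and $C^2$-close to an autonomous function on $[0,\rho_0]\times Y$, this oscillation can be taken arbitrarily small. Thus for each fixed $N$ one can arrange $\max_{k,z}|c_{k,z}|$ to be smaller than the action gap in Remark~\ref{rem:hsmall}(i), which is exactly what you need. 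The cofinality then goes through because $N$ is fixed before $H'$ is chosen.
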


\section{Properties of positive $S^1$-equivariant SH}
\label{sec:proofs1}

Let $(X,\lambda)$ be a Liouville domain.  We now show that the positive $S^1$-equivariant homology $SH^{S^1,+}(X,\lambda)$ defined in \S\ref{sec:equivariantsh}, which we denote by $CH(X,\lambda)$ for short, satisfies all of the properties in Proposition~\ref{prop:ch}.

\subsection{Free homotopy classes}

Given an admissible Hamiltonan, $H$, we can decompose the complex $CF^{S^1,N,\leq L}(H,J)$ into a direct sum
\[
CF^{S^1,N,\leq L}(H,J) = \bigoplus_{\Gamma}CF^{S^1,N,\leq L}(H,J,\Gamma).
\]
Here $\Gamma$ ranges over free homotopy classes of loops in $X$, and $CF^{S^1,N,\leq L}(H,J,\Gamma)$ denotes the subset of $CF^{S^1,N,\le L}(H,J)$ generated by $S^1$-orbits of pairs $(z,\gamma)$ where $\gamma$ represents the free homotopy class $\Gamma$.

The differentials and continuation maps defined in \S\ref{sec:equivariantsh} all count certain cylinders, and thus respect the above direct sum decomposition. As a result, we obtain a corresponding direct sum decomposition in \eqref{eqn:peshquotient} and \eqref{eqn:pesh}, so that we can decompose
\[
CH(X,\lambda) = \bigoplus_{\Gamma}CH(X,\lambda,\Gamma),
\]
where $CH(X,\lambda,\Gamma)$ is defined like $CH(X,\lambda)$ but only using loops in the free homotopy class $\Gamma$.

Similar remarks apply to all of the constructions to follow; we will omit the free homotopy class $\Gamma$ below to simplify notation.

\subsection{Action filtration}
\label{pf:actionfiltration}

Given $L\in\R$, we now define a version of positive $S^1$-equivariant symplectic homology ``filtered up to action $L$'', which we denote by $CH^L(X,\lambda)$. This will only depend on the largest element of $\Spec(Y,\lambda)$ which is less than or equal to $L$. Thus we can assume without loss of generality that $L\notin\Spec(Y,\lambda)$.

As in Definition \ref{def:positiveequivSH}, we can consider the quotient complex
\[
CF^{S^1,N,+,\leq L}(H,J) := \frac{CF^{S^1,N,\leq L}(H,J)}{CF^{S^1,N,\leq\epsilon}(H,J)}.
\]
As in Definition~\ref{def:psh}, the homology of the quotient complex is independent of $J$, so we can denote this homology by $HF^{S^1,N,+,\leq L}(H)$. If $(N_1,H_1)\le (N_2,H_2)$, then the continuation chain map induces a well-defined map $HF^{S^1,N_1,+,\leq L}(H_1)\to HF^{S^1,N_2,+,\leq L}(H_2)$.

\begin{definition}
We define the positive $S^1$-equivariant symplectic homology filtered up to action $L$ to be
\[
CH^L(X,\lambda)\eqdef SH^{S^1,+,\leq L}(X,\lambda) \eqdef \lim_{\substack{\longrightarrow\\ {N,H}}}HF^{S^1,N,+,\leq L}(H).
\]
\end{definition}
It follows from Remark~\ref{rem:hsmall}(ii) that if $L\notin\Spec(Y,\lambda)$, then $CH^L(X,\lambda)$ depends only on the largest element of $\Spec(Y,\lambda)$ that is less than $L$.

Given an admissible parametrized Hamiltonian $H$, a nonnegative integer $N$, a generic parametrized almost complex structure $J$ as in \eqref{eqn:Jparam}, and real numbers $L_1<L_2$, we have an inclusion of chain complexes
\begin{equation}
\label{eqn:icc}
CF^{S^1,N,+,\le L_1}(H,J) \longrightarrow CF^{S^1,N,+,\le L_2}(H,J).
\end{equation}
The usual continuation map argument shows that the induced map on homology,
\begin{equation}
\label{eqn:icch}
HF^{S^1,N,+,\leq L_1}(H)\longrightarrow HF^{S^1,N,+,\leq L_2}(H),
\end{equation}
does not depend on the choice of $J$, and commutes with the continuation map for $(N_1,H_1) \le (N_2,J_2)$.

\begin{definition}
\label{def:il2l1}
We define the map
\begin{equation}
\label{eqn:imathl2l1}
\imath_{L_2,L_1} : CH^{L_1}(X,\lambda) \longrightarrow CH^{L_2}(X,\lambda)
\end{equation}
to be the direct limit over pairs $(N,H)$ of the maps \eqref{eqn:icch}.
\end{definition}

We then have the required property
\begin{equation}
\label{eqn:tdl}
\lim_{L\to \infty} CH^L(X,\lambda) = CH(X,\lambda),
\end{equation}
because we can compute the direct limit
\[
\lim_{\substack{\longrightarrow\\{N,H,L}}} HF^{S^1,N,+,\le L}(H)
\]
either by first taking the limit over pairs $(N,H)$, which gives the left hand side of \eqref{eqn:tdl}, or by first taking the limit over $L$, which gives the right hand side of \eqref{eqn:tdl}.

\begin{remark}
One can equivalently define $CH^L(X,\lambda)$ by repeating the definition of $CH(X,\lambda)$, but using appropriate admissible Hamiltonians where the limiting slope is equal to $L$.
\end{remark}

\subsection{$U$ map}
\label{sec:U}

We now define the $U$ map on $CH(Y,\lambda)$, similarly to \cite[\S2.4]{bo}.

Recall from Remark~\ref{rmk:simplifyingcomplex} that given an admissible Hamiltonian $H':S^1\times\widehat{X}\to\R$ and a nonnegative integer $N$, we can choose a pair $(H_N,J_N)$ so that the chain complex $\left(CH^{S^1,N}(H_N,J_N),\partial^{S^1}\right)$ has the nice form given by \eqref{eqn:cmsimplified} and \eqref{eqn:partialsimplified}.

It follows from \eqref{eqn:partialsimplified} that the operation of ``multiplication by $u^{-1}$'', sending a chain complex generator $u^i\tensor\gamma$ to $u^{i-1}\tensor\gamma$ when $i>0$ and to $0$ when $i=0$, is a chain map. This induces a map on the homology $HF^{S^1,N}(H')$, which we denote by $U_{N,H'}$. A priori this map also depends on the choice of pair $(H_N,J_N)$, but the usual continuation map argument shows that it does not. In addition, if $(N_1,H_1')\le (N_2,H_2')$, then the continuation map $HF^{S^1,N_1}(H_1')\to HF^{S^1,N_2}(H_2')$ fits into a commutative diagram
\[
\begin{CD}
HF^{S^1,N_1}(H_1') @>>> HF^{S^1,N_2}(H_2')\\
@V{U_{N_1,H_1'}}VV @VV{U_{N_2,H_2'}}V\\
HF^{S^1,N_1}(H_1') @>>> HF^{S^1,N_2}(H_2').
\end{CD}
\]
It then follows from Proposition~\ref{shortS1} that we obtain a well-defined map
\[
U = \lim_{\substack{\longrightarrow\\N,H'}}U_{N,H'}
\]
on $SH_*^{S^1}(X,\lambda)$.

Since the $U$ map is induced by chain maps which respect (in fact preserve) the symplectic action filtration, it also follows from Proposition~\ref{shortS1+} that we obtain a well-defined $U$ map on $CH(Y,\lambda)$. Similarly we obtain a well-defined $U$ map on $CH^L(Y,\lambda)$. This completes the proof of the ``$U$ map'' property.

For use in \S\ref{sec:starshaped} below, we also note that there is the following Gysin-type exact sequence:

\begin{proposition}
If $(X,\lambda)$ is a Liouville domain, then there is a long exact sequence
\begin{equation}
\label{Gysin}
\xymatrix{\cdots\ar[r]&SH^+(X,\lambda)\ar[r] & CH(X,\lambda)\ar[r]^U & CH(X,\lambda)\ar[r] & SH^{+}(X,\lambda)\ar[r] & \cdots}
\end{equation}
\end{proposition}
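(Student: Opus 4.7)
The plan is to obtain \eqref{Gysin} as the long exact sequence of a short exact sequence of chain complexes that implements, at the Floer-chain level, the classical Gysin sequence for the $S^1$-bundle $S^{2N+1}\to \C P^N$.

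I would work in the simplified chain model of Remark~\ref{rmk:simplifyingcomplex}. Fix $H'\in\Hstd$ and a positive integer $N$, together with a compatible sequence $(H_k,J_k)_{k=0,\ldots,N}$ as in that remark, so that
\[
CF^{S^1,N,+}(H_N,J_N)=\Q\{1,u,\ldots,u^N\}\tensor_\Q CF^+(H',J_0),\qquad \partial^{S^1}=\sum_{i=0}^N u^{-i}\tensor \varphi_i,
\]
where $\varphi_0$ is the Floer differential on $CF^+(H',J_0)$. Define
\[
\iota:CF^+(H',J_0)\longrightarrow CF^{S^1,N,+}(H_N,J_N),\qquad \gamma\mapsto 1\tensor\gamma,
\]
and let $U$ denote the chain-level map of \S\ref{sec:U}, viewed here as
\[
U:CF^{S^1,N,+}(H_N,J_N)\longrightarrow CF^{S^1,N-1,+}(H_{N-1},J_{N-1}),\qquad u^k\tensor\gamma\mapsto u^{k-1}\tensor\gamma,
\]
with the convention $U(1\tensor\gamma)=0$. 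Inspection of $\partial^{S^1}$ shows that $\iota$ is a chain map (the terms $u^{-i}\tensor\varphi_i(\gamma)$ with $i\ge 1$ annihilate $1\tensor\gamma$), while $U$ is already known to be a chain map from \S\ref{sec:U}; the key compatibility is that $\varphi_i$ for $i\le N-1$ is the same whether computed from $(H_N,J_N)$ or from $(H_{N-1},J_{N-1})$, which is exactly the content of the $\widetilde{i}_0^\star$-equivariance built into Remark~\ref{rmk:simplifyingcomplex}. Since $\ker U=1\tensor CF^+(H',J_0)=\op{im}\iota$ and $U$ is visibly surjective, one obtains the short exact sequence
\[
0\longrightarrow CF^+(H',J_0)\xrightarrow{\iota} CF^{S^1,N,+}(H_N,J_N)\xrightarrow{U} CF^{S^1,N-1,+}(H_{N-1},J_{N-1})\longrightarrow 0,
\]
whose associated long exact sequence in homology has the form
\[
\cdots\to HF^+(H')\to HF^{S^1,N,+}(H_N)\xrightarrow{U} HF^{S^1,N-1,+}(H_{N-1})\to HF^+(H')\to\cdots.
\]

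Since $\iota$ and $U$ act diagonally on the decomposition by $u$-powers, they commute with the continuation chain maps arising from the parametrized Floer equation \eqref{eq:fleorparam} as $H'$ varies, and with the stabilization maps $\widetilde{i}_0^\star$ as $N$ varies. Exactness of direct limits then lets the long exact sequence descend to the limit. Taking the direct limit over $H'\in\Hstd$ and $N\to\infty$ and invoking Propositions~\ref{shortS1} and \ref{shortS1+} (together with the analogous description of $SH^+(X,\lambda)$ as $\lim_{H'}HF^+(H')$), $HF^+(H')$ limits to $SH^+(X,\lambda)$, while both $HF^{S^1,N,+}(H_N)$ and $HF^{S^1,N-1,+}(H_{N-1})$ limit to $CH(X,\lambda)$. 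This yields \eqref{Gysin}.

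The main point that requires care is the identification of the middle map in the limiting long exact sequence with the $U$ map of \S\ref{sec:U}. This is essentially immediate because both are induced by the same chain-level prescription $u^k\tensor\gamma\mapsto u^{k-1}\tensor\gamma$, but it does rely on the compatibility of the parametrized data $(H_k,J_k)$ across $N$ so that the structure maps really respect the $u$-power decomposition; this compatibility must be checked throughout the stabilization procedure. Once this bookkeeping is in place, everything else is formal, and the Gysin sequence \eqref{Gysin} follows.
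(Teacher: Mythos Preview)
Your argument is correct and is essentially the standard Bourgeois--Oancea construction that the paper cites: the paper's own proof is simply a pointer to \cite[Prop.\ 2.9]{bo}, and what you have written is a faithful unpacking of that reference in the simplified chain model of Remark~\ref{rmk:simplifyingcomplex}. The one small wrinkle worth noting is that the paper's $U$ map in \S\ref{sec:U} is literally an endomorphism of $CF^{S^1,N,+}$ rather than a map to $CF^{S^1,N-1,+}$, so your identification of the middle map in the limit with the paper's $U$ uses that the paper's $U$ factors as your $U$ followed by the stabilization inclusion $CF^{S^1,N-1,+}\hookrightarrow CF^{S^1,N,+}$; you essentially say this, and it is exactly the compatibility guaranteed by the $\widetilde{i}_0$-equivariance.
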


\begin{proof}
With the above definition of $U$, this follows as in \cite[Prop.\ 2.9]{bo}. This was also shown earlier in \cite{BOjta} using a slightly different definition of positive $S^1$-equivariant symplectic homology.
\end{proof}

\subsection{Reeb Orbits}

Let $L_1<L_2$ such that there does not exist a Reeb orbit $\gamma$ of $\lambda|_{\partial X}$ having action $\mc{A}(\gamma)$ in the interval $(L_1,L_2]$. As in \S\ref{pf:actionfiltration}, we can also assume without loss of generality that $L_1\notin\Spec(Y,\lambda)$. Then for every triple $(N,H,J)$, if the limiting slope of $H$ is sufficiently large, then the inclusion of chain complexes \eqref{eqn:icc} is the identity map. It follows that the map \eqref{eqn:icch} is an isomorphism, and consequently the direct limit map \eqref{eqn:imathl2l1} is an isomorphism as desired.

\subsection{$\delta$ map}
\label{sec:deltamap}

To define the delta map, we have the following:

\begin{proposition}
Let $(X,\lambda)$ be a Liouville domain. Then there is a canonical long exact sequence
\begin{equation}
\label{eq:LESquotient}
			\xymatrix{
			H_*(X,\partial X)\otimes H_*(BS^1) \ar[rr]   & & SH^{S^1}(X,\lambda)\ar[ld] \\
			& SH^{S^1,+}(X,\lambda) \ar[lu]^{\delta} &
		}.
\end{equation}
\end{proposition}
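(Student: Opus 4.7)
The plan is to obtain the long exact sequence \eqref{eq:LESquotient} from the tautological short exact sequence defining the positive part, and then to identify the homology of the subcomplex of ``constant orbits'' with $H_*(X,\partial X)\tensor H_*(BS^1)$.

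For each admissible parametrized Hamiltonian $H$ on $S^1\times\widehat{X}\times S^{2N+1}$ and each generic $S^1$-invariant $J$, the quotient \eqref{eqn:peshquotient} is by definition the cokernel in a short exact sequence of chain complexes
\[
0 \longrightarrow CF^{S^1,N,\le\epsilon}(H,J) \longrightarrow CF^{S^1,N}(H,J) \longrightarrow CF^{S^1,N,+}(H,J) \longrightarrow 0.
\]
Both the continuation maps associated to $(N_1,H_1)\le(N_2,H_2)$ and the maps \eqref{eqn:imathl2l1} are induced by chain maps that respect this filtration, so the resulting long exact sequence in homology is compatible with the direct system over $(N,H)$. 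Since direct limits are exact, passing to the limit yields a long exact sequence
\[
\cdots \longrightarrow \lim_{\substack{\longrightarrow\\ N,H}} HF^{S^1,N,\le\epsilon}(H) \longrightarrow SH^{S^1}(X,\lambda) \longrightarrow SH^{S^1,+}(X,\lambda) \stackrel{\delta}{\longrightarrow} \cdots.
\]
The connecting homomorphism is the desired map $\delta$; canonicity follows from the standard continuation argument showing that each of these homologies is independent of all choices up to canonical isomorphism.

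It remains to identify $\lim_{N,H} HF^{S^1,N,\le\epsilon}(H)$ with $H_*(X,\partial X;\Q)\tensor H_*(BS^1;\Q)$. Using the simplified description of Remark~\ref{rmk:simplifyingcomplex}, for a fixed $H'\in\Hstd$ and compatible choices of $(H_N,J_N)$, we have
\[
CF^{S^1,N}(H_N,J_N) = \Q\{1,u,\ldots,u^N\}\tensor_\Q CF(H',J_0),
\]
and the subcomplex $CF^{S^1,N,\le\epsilon}(H_N,J_N)$ is precisely $\Q\{1,u,\ldots,u^N\}\tensor_\Q CF^{\le\epsilon}(H',J_0)$ by Remark~\ref{rem:hsmall}(i). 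The latter factor is generated by critical points of the $C^2$-small autonomous Morse function $H'|_X$, and the standard argument (cf.\ \cite{V,FH}) identifies its Floer homology with Morse homology of $H'|_X$, which with Liouville-domain conventions (Reeb flow pointing outward, so the Liouville vector field is gradient-like for $-H'$ near $\partial X$) is canonically $H_*(X,\partial X;\Q)$. Regarding the differential \eqref{eqn:partialsimplified}, the operator $\varphi_0$ restricted to this subcomplex is the Morse differential, while the off-diagonal operators $\varphi_i$ for $i>0$ vanish on constants (see the next paragraph). Passing to the limit $N\to\infty$ replaces $\Q\{1,u,\ldots,u^N\}$ by $H_*(BS^1;\Q)$, and passing to the limit over $H'$ leaves the tensor product factor $H_*(X,\partial X;\Q)$ unchanged, yielding the asserted identification.

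The technical heart of the argument is the vanishing of $\varphi_i$ for $i>0$ on the constant-orbit subcomplex. The cleanest justification is that $S^1$ acts trivially on the locus of constant orbits in $\widehat{X}$, so the Borel construction produces, at the chain level, a tensor product with the cellular chain complex of $BS^1$; equivalently, any parametrized Floer cylinder connecting two constants has its $\widehat{X}$-component $\theta$-independent and reduces to a Morse trajectory of $H'|_X$, so introducing nontrivial $\eta$-dependence (which is what produces the higher $\varphi_i$) would force the Fredholm index to drop below what is required to contribute to the differential. Once this vanishing is established, the tensor product structure on homology follows formally, the identification of the subcomplex limit with $H_*(X,\partial X;\Q)\tensor H_*(BS^1;\Q)$ is immediate, and substitution into the long exact sequence derived above gives \eqref{eq:LESquotient}.
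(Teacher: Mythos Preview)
Your argument is correct and follows essentially the same route as the paper: derive the long exact sequence from the tautological short exact sequence \eqref{eqn:sescc}, pass to the direct limit, and identify the low-action term via the simplified model of Remark~\ref{rmk:simplifyingcomplex} as a tensor product of the Morse complex of $H'|_X$ with $\Q\{1,u,\ldots,u^N\}$. The paper simply asserts that the differential on the constant-orbit subcomplex is $\partial_{\op{Morse}}\otimes 1$ (citing \cite{V,BOjta}), whereas you supply a heuristic justification via triviality of the $S^1$-action and an index count; your parenthetical about conventions (``Reeb flow pointing outward'') is garbled---it is the Liouville vector field that points outward, equivalently $\nabla H'$ points out along $\partial X$---but the conclusion that the Morse homology computes $H_*(X,\partial X)$ is correct.
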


\begin{proof}
For any triple $(N,H,J)$ as in Definition~\ref{def:positiveequivSH}, by definition we have a short exact sequence of chain complexes
\begin{equation}
\label{eqn:sescc}
0 \longrightarrow CF^{S^1,N,\leq\epsilon}(H,J) \longrightarrow CF^{S^1,N}(H,J) \longrightarrow CF^{S^1,N,+}(H,J) \longrightarrow 0.
\end{equation}
Since continuation maps respect symplectic action, we can take the direct limit of the resulting long exact sequences on homology to obtain a canonical long exact sequence
\begin{equation}
\label{eqn:cles}
\cdots \longrightarrow
SH^{S^1,\le\epsilon}(X,\lambda) \longrightarrow SH^{S^1}(X,\lambda) \longrightarrow SH^{S^1,+}(X,\lambda) \longrightarrow \cdots
\end{equation}
where we define
\begin{equation}
\label{eqn:shmorse}
SH^{S^1,\le \epsilon}(X,\lambda) = \lim_{\substack{\longrightarrow\\N,H}} HF^{S^1,N,\le\epsilon}(H,J).
\end{equation}

To compute \eqref{eqn:shmorse}, note that we have a canonical isomorphism
\begin{equation}
\label{eqn:counterpart}
HF^{S^1,N,\le\epsilon}(H,J) = H_*(X,\partial X) \tensor \Q\{1,u,\ldots,u^N\}.
\end{equation}
For proofs of counterparts of this isomorphism for different definitions of $S^1$-equivariant symplectic homology, see \cite[Proposition 1.3]{V} and \cite[Lemma 4.8]{BOjta}. In our context, the isomorphism \eqref{eqn:counterpart} holds because if we compute the left hand side as in Remark~\ref{rmk:simplifyingcomplex}, then the chain complex comes from the critical points of $H'$ on $X$, so that we have
\begin{equation}
\label{eqn:Morse}
CF^{S^1,N,\le\epsilon}(H,J) = C_{\op{Morse}}(X,H')\tensor\Q\{1,u,\ldots,u^N\}.
\end{equation}
Here $C_{\op{Morse}}(X,H')$ denotes the chain complex for the Morse cohomology of $H'$, whose differential counts upward gradient flow lines; and $u^i$ represents the index $2i$ critical point of $f_N$. The differential on the left side of \eqref{eqn:Morse} agrees on the right side with the tensor product of the Morse differential and the identity on $\Q\{1,u,\ldots,u^N\}$. Since the gradient of $H'$ points out of $X$ along $\partial X$, the Morse cohomology agrees with the relative homology $H_*(X,\partial X)$. This proves \eqref{eqn:counterpart}, and taking the direct limit over pairs $(N,H)$ gives a canonical isomorphism
\begin{equation}
\label{eqn:SHcritical}
SH^{S^1,\le\epsilon}(X,\lambda) = H_*(X,\partial X)\tensor H_*(BS^1).
\end{equation}
Putting this into \eqref{eqn:cles} proves the proposition.
\end{proof}

The map $\delta$ vanishes on $CH(X,\lambda,\Gamma)$ for every free homotopy class $\Gamma\neq 0$, because the maps in the long exact sequence \eqref{eqn:cles} preserve the free homotopy class, and the homology \eqref{eqn:SHcritical} is entirely supported in the summand corresponding to $\Gamma=0$.

\subsection{Scaling}

If $(\widehat{X},\widehat{\lambda})$ is the completion of $(X,\lambda)$, then the completion of $(X,r\lambda)$ is naturally identified with the same manifold $\widehat{X}$, with the $1$-form $r\widehat{\lambda}$.

If $H:S^1\times \widehat{X}\to\R$ is an $S^1$-dependent Hamiltonian, and if $X_H$ denotes the ($S^1$-dependent) Hamiltonian vector field for $H$ defined using $\widehat{\omega}$, then the Hamiltonian vector field for $H$ defined using $r\widehat{\omega}$ is $r^{-1}X_H$. It follows that if $H$ is an admissible Hamiltonian for $(X,\lambda)$, then $rH$ is an admissible Hamiltonian for $(X,r\lambda)$, with the same $1$-periodic orbits. Note here that $\Spec(Y,r\lambda) = r\Spec(Y,\lambda)$, so the conditions involving the action spectrum are preserved. In particular, if $\epsilon=\frac{1}{2}\min\Spec(Y,\lambda)$ as usual, then
\[
r\epsilon = \frac{1}{2}\min\Spec(Y,r\lambda).
\]

Likewise, if $H:S^1\times \widehat{X}\times S^{2N+1}\to\R$ is an admissible parametrized Hamiltonian for $(X,\lambda)$, then $rH$ is an admissible parametrized Hamiltonian for $(X,r\lambda)$.

If $J$ is an admissible parametrized almost complex structure \eqref{eqn:Jparam} as needed to define the (positive) $S^1$-equivariant symplectic homology of $(X,\lambda)$, then $J$ is not quite admissible for $(X,r\lambda)$, because the condition \eqref{eqn:JReeb} only holds up to a constant. However one can still define (positive) $S^1$-equivariant symplectic homology using parametrized almost complex structures that satisfy this weaker version of admissibility, cf.\ \cite[\S1.3.2]{oanceasurvey}, and a continuation argument shows that the resulting (positive) $S^1$-equivariant symplectic homology will be canonically isomorphic.

Putting this together, we have a canonical isomorphism of chain complexes
\[
CF^{S^1,N,\le L}(H,J) = CF^{S^1,N,\le rL}(rH,J).
\]
We then have a canonical isomorphism of quotient chain complexes
\[
\frac{CF^{S^1,N,\le L}(H,J)}{CF^{S^1,N,\le \epsilon}(H,J)} = \frac{CF^{S^1,N,\le rL}(rH,J)}{CF^{S^1,N,\le r\epsilon}(rH,J)}.
\]
Taking the direct limit over pairs $(N,H)$ gives the desired canonical isomorphism
\[
CH^L(X,\lambda) = CH^{rL}(X,r\lambda).
\]
We can also take $L=+\infty$, giving the desired canonical isomorphism
\[
CH(X,\lambda) = CH(X,r\lambda).
\]

These scaling isomorphisms preserve the $U$ and $\delta$ maps since the holomorphic curves counted are the same.

\subsection{Star-Shaped Domains}
\label{sec:starshaped}

When $X$ is a nice star-shaped domain, the chain complex $CF^{S^1,N}(H,J)$ has a canonical $\Z$ grading, in which the grading of a pair $(z,\gamma)$ is $\op{ind}(z)-CZ(\gamma)$. Here $\op{ind}(z)$ denotes the Morse index of the corresponding critical point of $f_N$, while $CZ(\gamma)$ denotes the Conley-Zehnder index of $\gamma$, computed using a global trivialization of $TX$.

With respect to this grading, the long exact sequence \eqref{eq:LESquotient} has the form
\begin{equation}
			\xymatrix{
			H_{*+n}(X,\partial X)\otimes H_*(BS^1) \ar[rr]   & & SH^{S^1}_*(X)\ar[ld] \\
			& SH^{S^1,+}_*(X) \ar[lu]_{[-1]}^{\delta} &
		}.
\end{equation}
For a nice star-shaped domain $X$, we have $SH_*^{S^1}(X)=0$; see \cite[\S1.3.2]{these}. Assertions (i) and (ii) in the Star-Shaped Domains property follow. (The computation \eqref{eqn:sscomputation} also follows from \cite[Thm.\ 1.1]{gutt} together with the description of the Reeb orbits on the boundary of an ellipsoid in the proof of Lemma~\ref{lem:ellipsoid}.)

To prove assertion (iii), note that for a nice star-shaped domain, the Gysin-type sequence \eqref{Gysin} with gradings has the form
\[
\xymatrix{\cdots\ar[r]&SH^+_k(X)\ar[r] & CH_k(X)\ar[r]^U & CH_{k-2}(X)\ar[r] & SH^{+}_{k-1}(X)\ar[r] & \cdots}.
\]
On the other hand, if $X$ is a nice star-shaped domain then
\[
SH^+_*(X)=
\begin{cases}
		\Q&\textrm{if }*=n+1\\
		0&\textrm{otherwise}
\end{cases},
\]
see \cite[\S1.2.4]{these}. Therefore the $U$ map $CH_*(X,\lambda)\to CH_{*-2}(X,\lambda)$ is an isomorphism except when $*=n+1$.

Finally, we need to prove assertion (iv). Suppose that ${\lambda_0}|_{\partial X}$ is nondegenerate and has no Reeb orbit $\gamma$ with action $\mc{A}(\gamma)\in(L_1,L_2]$ and Conley-Zehnder index $\op{CZ}(\gamma) = n-1+2k$. We need to show that the map
\begin{equation}
\label{eqn:ssis}
\imath_{L_2,L_1}: CH^{L_1}_{n-1+2k}(X,\lambda_0) \longrightarrow CH^{L_2}_{n-1+2k}(X,\lambda_0)
\end{equation}
is surjective. As in \S\ref{pf:actionfiltration}, we can assume without loss of generality that $L_1,L_2\notin\Spec(Y,\lambda)$.

To prove that \eqref{eqn:ssis} is surjective, we compute positive $S^1$-equivariant symplectic homology using an admissible Hamiltonian $H':S^1\times\widehat{X}\to\R$ as in Remark~\ref{rmk:simplifyingcomplex}. Furthermore, we assume that $H'$ is perturbed from an admissible Morse-Bott Hamiltonian as in Lemma~\ref{lem:orbitesMB}, with boundary slope $\beta>L_2$. As a result, if $L<\beta$ is not close to the action of a Reeb orbit, then the chain complex $CF^{S^1,N,+,\le L}(H_N,J_N)$ is generated by symbols $u^k\tensor\widecheck{\gamma}$ and $u^k\tensor\widehat{\gamma}$ where $0\le k\le N$ and $\gamma$ is a Reeb orbit with action $\mc{A}(\gamma)\le L$. Furthermore, the grading of a generator is given by
\[
\begin{split}
|u^k\tensor\widecheck{\gamma}| &= \op{CZ}(\gamma) + 2k,\\
|u^k\tensor\widehat{\gamma}| &= \op{CZ}(\gamma) + 2k+1.
\end{split}
\]

Now fix $N$, $H_N$, and $J_N$. The differential on the chain complex $CF^{S^1,N,+,\le L}(H_N,J_N)$ does not increase the symplectic action of Reeb orbits. This means that we can define an integer-valued filtration $\mc{F}$ on the chain complex as follows: Denote the real numbers in the action spectrum $\Spec(Y,\lambda)$ by 
\[
a_1<a_2<\cdots.
\]
If $\gamma$ is a Reeb orbit with action $\mc{A}(\gamma)=a_j$, then we define the filtration
\[
\mc{F}(u^i\tensor\widecheck{\gamma}) = \mc{F}(u^i\tensor\widehat{\gamma}) = j.
\]
Let $\mc{F}_jCF^{\le L}$ denote the subcomplex of $CF^{S^1,N,+,\le L}(H_N,J_N)$ spanned by generators with filtration $\le j$. Let
\[
\mc{G}_jCF^{\le L} = \mc{F}_jCF^{\le L}/\mc{F}_{j-1}CF^{\le L}
\]
denote the associated graded complex.

It is shown in \cite[\S3.2]{gutt} that the homology of $\bigoplus_j\mc{G}_jCF^{\le L}$ is generated by $u^0\tensor\widecheck{\gamma}$ and $u^N\tensor\widehat{\gamma}$ where $\gamma$ ranges over the good Reeb orbits with action less than $L$. It follows that if $N$ is sufficiently large with respect to $k$ and $L$, then the grading $n-1+2k$ part of $\bigoplus_j\mc{G}_jCF^{\le L}$ is generated by $u^0\tensor\widecheck{\gamma}$ where $\gamma$ is a good Reeb orbit with action less than $L$ and Conley-Zehnder index equal to $n-1+2k$. Therefore, the inclusion of chain complexes
\begin{equation}
\label{eqn:ssinclusion}
CF^{S^1,N,+,\le L_1}(H_N,J_N) \longrightarrow CF^{S^1,N,+,\le L_2}(H_N,J_N)
\end{equation}
induces an injection
\[
\mc{G}_jCF^{\le L_1} \longrightarrow \mc{G}_jCF^{\le L_2}
\]
for each $j$. Furthermore, under our assumption on $k$, $L_1$, and $L_2$, if $N$ is sufficiently large, then the above injection in grading $n-1+2k$ is an isomorphism
\[
\mc{G}_jCF_{n-1+2k}^{\le L_1} \stackrel{\simeq}{\longrightarrow} \mc{G}_jCF_{n-1+2k}^{\le L_2}
\]
for each $j$. It now follows from the algebraic Lemma~\ref{lem:algebra} below that the inclusion \eqref{eqn:ssinclusion} induces a surjection on the degree $n-1+2k$ homology
\begin{equation}
\label{eqn:surjection}
HF_{n-1+2k}^{S^1,N,+,\le L_1}(H_N,J_N) {\longrightarrow} HF_{n-1+2k}^{S^1,N,+,\le L_2}(H_N,J_N).
\end{equation}

\begin{lemma}
\label{lem:algebra}
Let
\begin{gather*}
0=\mc{F}_0C_* \subset \mc{F}_1C_* \subset \cdots \subset \mc{F}_JC_*=C_*,
\\
0=\mc{F}_0C_*' \subset \mc{F}_1C_*' \subset \cdots \subset \mc{F}_JC_*'=C_*'
\end{gather*}
be filtered chain complexes. Denote the associated graded chain complexes by $\mc{G}_jC_*=\mc{F}_jC_*/\mc{F}_{j-1}C_*$ and $\mc{G}_jC_*'=\mc{F}_jC_*' / \mc{F}_{j-1}C_*'$. Let $\phi:C_*\to C_*'$ be a map of filtered chain complexes. For a given grading $k$, suppose that for each $j$, the map $\phi$ induces a surjection $H_k(\mc{G}_jC_*)\to H_k(\mc{G}_jC_*')$ and an injection $H_{k-1}(\mc{G}_jC_*)\to H_{k-1}(\mc{G}_jC_*')$. Then $\phi$ induces a surjection $H_kC_*\to H_kC_*'$ and an injection $H_{k-1}C_*\to H_{k-1}C_*'$.
\end{lemma}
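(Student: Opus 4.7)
The plan is to prove Lemma~\ref{lem:algebra} by induction on $j$, using the long exact sequences in homology coming from the short exact sequences
\[
0 \longrightarrow \mc{F}_{j-1}C_* \longrightarrow \mc{F}_jC_* \longrightarrow \mc{G}_jC_* \longrightarrow 0
\]
and its primed analogue, together with a diagram chase of four-lemma type. The base case $j=0$ is trivial since all three complexes vanish. For the inductive step, the map $\phi$ induces a morphism of the two long exact sequences, giving a commutative ladder whose columns at levels $\mc{G}_jC_*\to\mc{G}_jC_*'$ are controlled by hypothesis (surjective in degree $k$, injective in degree $k-1$) and whose columns at levels $\mc{F}_{j-1}C_*\to\mc{F}_{j-1}C_*'$ are controlled by induction (same two properties). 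I would carry out the inductive step in two separate diagram chases.

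First, to prove surjectivity of $H_k(\mc{F}_jC_*)\to H_k(\mc{F}_jC_*')$: given $y'\in H_k(\mc{F}_jC_*')$, push it forward to $z'\in H_k(\mc{G}_jC_*')$, lift (by hypothesis on $\mc{G}_j$) to $z\in H_k(\mc{G}_jC_*)$, check using injectivity in degree $k-1$ on $\mc{F}_{j-1}$ that the connecting image of $z$ in $H_{k-1}(\mc{F}_{j-1}C_*)$ vanishes, and hence lift $z$ to some $y\in H_k(\mc{F}_jC_*)$; then $\phi(y)-y'$ comes from $H_k(\mc{F}_{j-1}C_*')$ and by the inductive surjectivity can be corrected by an element of $H_k(\mc{F}_{j-1}C_*)$. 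Second, to prove injectivity of $H_{k-1}(\mc{F}_jC_*)\to H_{k-1}(\mc{F}_jC_*')$: take $y$ in the kernel, push to $z\in H_{k-1}(\mc{G}_jC_*)$ which must vanish by the hypothesis, lift $y$ to $w\in H_{k-1}(\mc{F}_{j-1}C_*)$, observe that $\phi(w)$ lies in the image of the connecting map from $H_k(\mc{G}_jC_*')$, pull this back using surjectivity in degree $k$ on $\mc{G}_j$, and adjust $w$ so that $\phi(w)$ vanishes in $H_{k-1}(\mc{F}_{j-1}C_*')$; by the inductive injectivity $w$ is itself a boundary, so its image $y$ in $H_{k-1}(\mc{F}_jC_*)$ is zero by exactness.

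Applying these two chases at $j=J$ yields the surjectivity of $H_kC_*\to H_kC_*'$ and the injectivity of $H_{k-1}C_*\to H_{k-1}C_*'$. The proof is essentially bookkeeping; the only subtle point, and the main thing to organize carefully, is the interlocking of the two inductive statements (surjectivity in degree $k$ and injectivity in degree $k-1$) so that at each step both hypotheses are available when chasing either diagram. Because both properties are needed simultaneously in each chase, the induction must carry both assertions in tandem; this is the structural obstacle, but no novel idea beyond the standard four-lemma is required.
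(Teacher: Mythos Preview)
Your proposal is correct and follows essentially the same route as the paper: induction on $j$ using the long exact sequence of the pair $(\mc{F}_jC_*,\mc{F}_{j-1}C_*)$ and its primed analogue, carrying surjectivity in degree $k$ and injectivity in degree $k-1$ in tandem, with each inductive step established by a four-lemma diagram chase. The paper presents the argument more tersely (it just writes down the two relevant four-term commutative ladders and invokes the four-lemma), whereas you spell out the element-level chases, but the content is identical.
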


\begin{proof}
Since the filtrations are bounded, it is enough to prove by induction on $j$ that $\phi$ induces a surjection $H_k(\mc{F}_jC_*)\to H_k(\mc{F}_jC_*')$ and an injection $H_{k-1}(\mc{F}_jC_*) \to H_{k-1}(\mc{F}_jC_*')$. Assume that the claim holds for $j-1$. We then have a commutative diagram with exact rows
\[
\begin{CD}
H_k(\mc{F}_{j-1}C_*) @>>> H_k(\mc{F}_jC_*) @>>> H_k(\mc{G}_jC_*) @>>> H_{k-1}(\mc{F}_{j-1}C_*) \\
@VV{\op{surj}}V @VVV @VV{\op{surj}}V @VV{\op{inj}}V\\
H_k(\mc{F}_{j-1}C_*') @>>> H_k(\mc{F}_jC_*') @>>> H_k(\mc{G}_jC_*') @>>> H_{k-1}(\mc{F}_{j-1}C_*')
\end{CD}
\]
where the vertical arrows are induced by $\phi$. Surjectivity of the second vertical arrow then follows from chasing this diagram. (This is one of the two ``four-lemmas'' that imply the ``five lemma''.) Likewise, the injectivity claim for $j$ follows by chasing the commutative diagram with exact rows
\[
\begin{CD}
H_k(\mc{G}_jC_*) @>>> H_{k-1}(\mc{F}_{j-1}C_*) @>>> H_{k-1}(\mc{F}_jC_*) @>>> H_{k-1}(\mc{G}_jC_*) \\
 @VV{\op{surj}}V @VV{\op{inj}}V @VVV @VV{\op{inj}}V\\
 H_k(\mc{G}_jC_*') @>>> H_{k-1}(\mc{F}_{j-1}C_*') @>>> H_{k-1}(\mc{F}_jC_*') @>>> H_{k-1}(\mc{G}_jC_*').
\end{CD}
\]
\end{proof}

Since \eqref{eqn:surjection} is a surjection, by taking the direct limit over $N$ and $H'$, and using an action-filtered version of Proposition~\ref{shortS1+}, we conclude that the map \eqref{eqn:ssis} is surjective as desired.

\section{Definition of transfer morphisms}
\label{section:transfer}

Let $(V,\lambda_V)$ and $(W,\lambda_W)$ be Liouville domains.
Let $\varphi:V\to W$ be a Liouville embedding, i.e.\ a smooth embedding such that $\varphi^{\star}\lambda_W=\lambda_V$. Assume also as in \S\ref{sec:shproperties} that $\varphi(V)\subset\op{int}(W)$. In this situation one can define a ``transfer morphism''
\begin{equation}
\label{eqn:transfermorphism}
\phi_{V,W}^{(S^1,+)} : SH^{(S^1,+)}(W,\lambda_W)\longrightarrow SH^{(S^1,+)}(V,\lambda_V).
\end{equation}
Here the superscript `$(S^1,+)$' means that the superscripts `$S^1$' and `$+$' are optional (but the same in all three places).

A transfer morphism for symplectic homology was defined by Viterbo \cite{V}, and extended by the first author in his PhD thesis \cite{gutt} for (positive) equivariant symplectic homology.  We now review what we need to know about the definition of the transfer morphisms \eqref{eqn:transfermorphism}, and then explain how to extend the construction to generalized Liouville embeddings as in Definition~\ref{def:gle}.

\subsection{Transfer morphisms for (positive) symplectic homology}
\label{sec:nonequivtransfer}

To construct transfer morphisms, we introduce a special class $\Hs_{stair}(V,W)$ of Hamiltonians on $S^1\times\widehat{W}$ called ``admissible stair Hamiltonians''.  The transfer morphism is defined as a direct limit of continuation morphisms between an admissible Hamiltonian $H_1 \in \Hstd(W)$ and an admissible stair Hamiltonian $H_2\in\Hs_{stair}(V,W)$.

Below, identify $V$ with its image under the Liouville embedding $\varphi$. Given $\delta>0$ small, there is a unique neighbourhood $U$ of $\partial V$ in $W\setminus \op{int}(V)$, together with a symplectomorphism
\[
(U,\omega_W) \simeq \bigl([0,\delta]\times \partial V, d(e^{\rho}\lambda_V)\bigr),
\]
such that the Liouville vector field for $\lambda_W$ on the left hand side corresponds to $\partial_\rho$ on the right hand side. Here $\rho$ denotes the $[0,\delta]$ coordinate.

\begin{definition}
\label{def:Hstair}
A Hamiltonian $H_{2} : S^1\times \widehat{W}\rightarrow\R$ is in $\Hs_{stair}(V,W)$ if and only if
\begin{description}
\item{(1)}
The restriction of $H_2$ to $S^1\times V$ is negative, autonomous (i.e.\ $S^1$-independent), and $C^2$-small (so that there are no non-constant 1-periodic orbits). Furthermore,
\begin{equation}
H > -\epsilon
\end{equation}
on $S^1\times V$, where $\epsilon=\frac{1}{2}\min\big\{\Spec(\partial V,\lambda_V) \cup \Spec(\partial W,\lambda_W)\big\}$.
\item{(2)}
On $S^1\times U\cong S^1\times[0,\delta]\times \partial V$, with $\rho$ denoting the $[0,\delta]$ coordinate, we have:
\begin{itemize}
\item
There exists $0<\rho_{0}<\frac{\delta}{4}$ such that for $\rho_{0}\leq\rho\leq \delta-\rho_{0}$ we have
\begin{equation}
\label{eqn:H2linear}
H_2(\theta, \rho, y) = \beta e^{\rho} + \beta',
\end{equation}
where $0<\beta\notin\Spec(\partial V,\lambda_V)\cup\Spec(\partial W,\lambda_W)$ and $\beta'\in\R$.
\item
There exists a strictly convex increasing function $h_1:[1,e^{\rho_0}]\to\R$ such that on $S^1\times[0,\rho_0]\times Y$, the function $H_2$ is $C^{2}$-close to the function sending $(\theta,\rho,p)\mapsto h_1(e^\rho)$. Here and in the rest of this definition, the meanings of ``close'' and ``small'' are as in Remarks~\ref{rem:orbitsofHstand} and \ref{rem:hsmall}.
\item
There exists a small, strictly concave, increasing function $h_2:[e^{\delta-\rho_0}, e^{\delta}]\to\R$ such that $h_2(e^\delta)-h_2$ is small, and on $S^1\times[\delta-\rho_0, \delta]\times Y$, the function $H_2$ is $C^{2}$-close to the function sending $(\theta,\rho,y)\mapsto h_2(e^\rho)$.
\end{itemize}
\item{(3)}
On $S^{1}\times W\setminus(V\cup U)$, the function $H_2$ is $C^{2}$-close to a constant.
\item{(4)}
On $S^1\times[0,+\infty)\times  \partial W$, with $\rho'$ denoting the $[0,\infty)$ coordinate, we have:
\begin{itemize}
\item There exists $\rho'_{1} > 0$ such that for $\rho'\ge\rho'_1$ we have
\[
H_2(\theta, \rho', p) = \mu e^{\rho'} + \mu',
\]
with $0<\mu\notin\Spec(\partial V,\lambda_V)\cup\Spec(\partial W,\lambda_W)$, $\mu<\frac{\beta(e^{\delta}-1)}{e^\delta}$, and $\mu'\in\R$.
\item
There exists a strictly convex, increasing function $h_3:[1,e^{\rho'_1}]\to\R$ such that $h_3-h_3(1)$ is small, and on $S^1\times[0,\rho'_1]\times Y$, the function $H_2$ is $C^{2}$-close to the function sending $(\theta,\rho',y)\mapsto h_3(e^{\rho'})$.
\end{itemize}
\item{(5)}
The Hamiltonian $H_2$ is nondegenerate, i.e.\ all $1$-periodic orbits of $X_{H_2}$ are nondegenerate.
\end{description}
We denote the set of admissible stair Hamiltonians by $\Hs_{stair}(V,W)$.
\end{definition}

The graph of an admissible stair Hamiltonian $H_2$ is shown schematically in Figure \ref{hamiltoniens}.
\begin{figure}
	\hspace{3cm}
	\def\svgwidth{1\textwidth}
	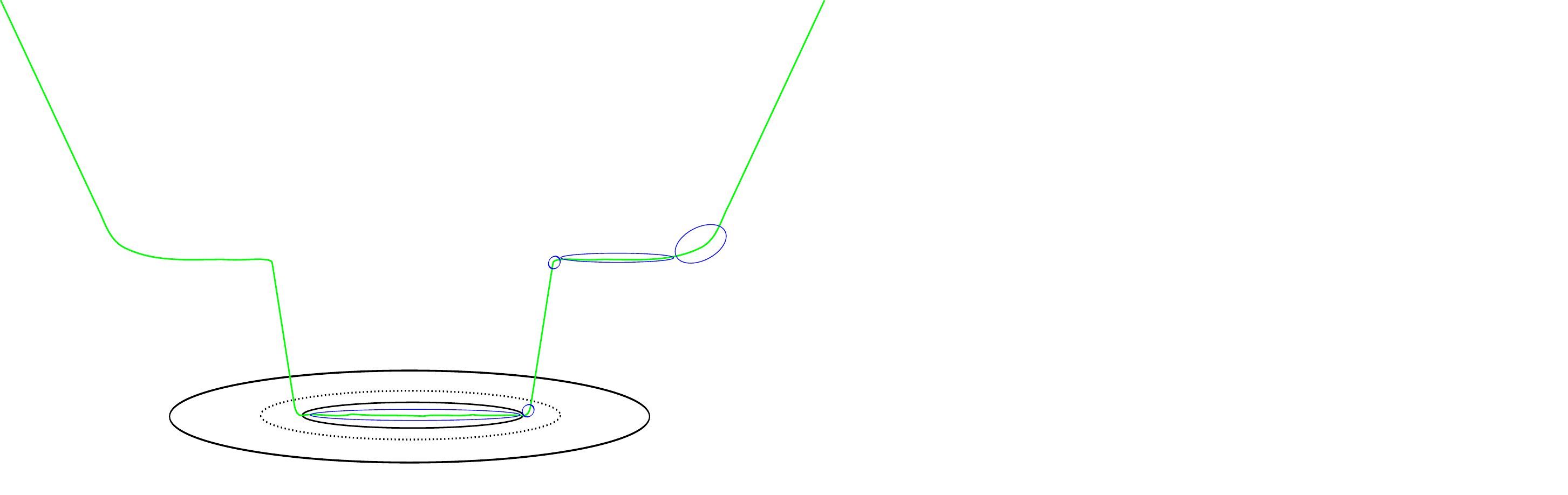
	\caption{\cite{gutt} Graph of an admissible stair Hamiltonian $H_2$ on $S^1\times \widehat{W}$}\label{hamiltoniens}
\end{figure}

The $1$-periodic orbits of $H_2$ lie either in the interior of $V$ (which we call region \rom{1}), in $[0,\rho_0]\times\partial V$ (region \rom{2}),
in $[\delta-\rho_0,\delta]\times\partial V$ (region \rom{3}), in $W\setminus(V\cup U)$ (region \rom{4}), or in $[0,\rho'_1]\times\partial W$ (region \rom{5}).
\begin{itemize}
\item[\textbf{\rom{1}}]
The $1$-periodic orbits in region \rom{1} correspond to critical points of $H_2$ on $V$.
\item[\textbf{\rom{2}}]
In region \rom{2}, the 1-periodic orbits are associated to Reeb orbits of $\lambda_V$ on $\partial V$ as in Remark~\ref{rem:hsmall}.
\item[\textbf{\rom{3}}]
In region \rom{3}, the 1-periodic orbits are likewise associated to Reeb orbits of $\lambda_V$ on $\partial V$.
\item[\textbf{\rom{4}}]
The $1$-periodic orbits in region \rom{4} correspond to critical points $H_2$ on $W\setminus(V\cup U)$.
\item[\textbf{\rom{5}}]
In region \rom{5}, the 1-periodic orbits are associated to Reeb orbits of $\lambda_W$ on $\partial W$.
\end{itemize}
The Hamiltonian actions of the $1$-periodic are ordered as follows:
\[	\mc{A}(\rom{4})<\mc{A}(\rom{5})<0<\mc{A}(\rom{1})<\mc{A}(\rom{2}).
\]
This means that every $1$-periodic orbit in region \rom{4} has Hamiltonian action less than every $1$-periodic orbit in region \rom{5}, and so forth. 

We now consider the Floer chain complex $CF(H_2,J_2)$ where $J_2:S^1\to\op{End}(T\widehat{W})$ is an $S^1$-family of almost complex structures on $\widehat{W}$. As in Definition~\ref{def:admJ}, we assume that $J_2^\theta$ is $\widehat{\omega}_W$-compatible for each $\theta\in S^1$, and that
\[
J_2^\theta(\partial_{\rho'}) = R_{\lambda_W}
\]
on $[\rho_1',\infty)\times\partial W$. This is enough to give a well-defined chain complex $CF(H_2,J_2)$, cf.\ \cite[\S1.2.3]{oanceathese}. We also assume that
\begin{equation}
\label{eqn:VReeb}
J_2^\theta(\partial_\rho) = R_{\lambda_V}
\end{equation}
on $[\rho_0,\delta-\rho_0]\times\partial V$.

Let $C^{\rom{1}, \rom{3}, \rom{4}, \rom{5}}(H_2,J_2)$ denote the subcomplex of $CF(H_2,J_2)$ generated by $1$-periodic orbits lying in regions \rom{1}, \rom{3}, \rom{4}, and \rom{5}. Let $C^{\rom{3}, \rom{4}, \rom{5}}(H_2,J_2)$  denote the subcomplex of $CF(H_2,J_2)$ generated by $1$-periodic orbits lying in regions \rom{3}, \rom{4} and  \rom{5}. These are subcomplexes because the action decreases along Floer trajectories, and \cite[Lem.\ 2.3]{CO} shows that there does not exist any Floer trajectory from region $\rom{3}$ to region $\rom{1}$ or $\rom{2}$. We then have quotient chain complexes
\[
\begin{split}
	C^{\rom{1},\rom{2}}(H_2,J_2) &= C^{\rom{1}, \rom{2}, \rom{3}, \rom{4}, \rom{5}}(H_2,J_2)/\raisebox{-1ex}{$C^{\rom{3}, \rom{4}, \rom{5}}(H_2,J_2)$}\\
	C^{\rom{2}}(H_2,J_2) &= C^{\rom{1}, \rom{2}, \rom{3}, \rom{4}, \rom{5}}(H_2,J_2)/\raisebox{-1ex}{$C^{\rom{1}, \rom{3}, \rom{4}, \rom{5}}(H_2,J_2)$}.
\end{split}
\]

Given $H_2$ and $J_2$ as above, let $H_2^V\in\Hstd(V)$ denote the admissible Hamiltonian for $V$ which agrees with $H_2$ on $V\cup ([0,\delta-\rho_0]\times\partial V)$, and which agrees with the right hand side of \eqref{eqn:H2linear} on $[\rho_0,\infty)\times \partial V$. Let $J_2^V$ denote the admissible $S^1$-family of almost complex structures on $\widehat{V}$ which agrees with $J_2$ on $V\cup ([0,\delta-\rho_0]\times\partial V)$, and which satisfies \eqref{eqn:VReeb} on $[\rho_0,\infty)\times \partial V$. Observe that we have canonical identifications of chain modules
\begin{equation}
\label{eqn:cicm}
\begin{split}
C^{\rom{1},\rom{2}}(H_2,J_2) &= CF\left(H_2^V,J_2^V\right),\\
C^{\rom{2}}(H_2,J_2) &= CF^{+}\left(H_2^V,J_2^V\right),
\end{split}
\end{equation}
because the generators on both sides correspond to the same $1$-periodic orbits in $V\cup([0,\delta-\rho_0]\times\partial V)$.

\begin{proposition}
\label{prop:homCII}
\cite[Proposition 4.4]{gutt}
The canonical identifications \eqref{eqn:cicm} induce isomorphisms on homology
\[
\begin{split}
H\bigl(C^{\rom{1},\rom{2}}(H_2,J_2),\partial\bigr) &= HF\left(H_2^V,J_2^V\right),\\
H\bigl(C^{\rom{2}}(H_2,J_2),\partial\bigr) &= HF^{+}\left(H_2^V,J_2^V\right).
\end{split}
\]
\end{proposition}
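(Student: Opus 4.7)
The chain modules on the two sides of \eqref{eqn:cicm} agree by construction: both are freely generated over $\Q$ by the $1$-periodic orbits lying in regions $\rom{1}$ and $\rom{2}$, which sit inside the overlap $V\cup([0,\delta-\rho_0]\times\partial V)$ where $(H_2,J_2)$ and $(H_2^V,J_2^V)$ coincide. The proposition therefore reduces to showing that the two differentials, which count Fredholm index $1$ Floer cylinders in $\widehat{W}$ and $\widehat{V}$ respectively, count the same moduli spaces. For this it suffices to confine every such Floer cylinder to the overlap on each side, since Floer's equation is then identical.

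The confinement follows from the standard maximum principle for Floer cylinders: if $H$ has the form $\beta e^\rho+\beta'$ and the almost complex structure satisfies $J^\theta(\partial_\rho)=R_\lambda$ on a region $[a,b]\times Y$ of the completion, then $\rho\circ u$ cannot attain a local maximum in the preimage of $(a,b)\times Y$. Hypothesis \eqref{eqn:H2linear} together with \eqref{eqn:VReeb} place us in this situation on $[\rho_0,\delta-\rho_0]\times\partial V\subset\widehat{W}$ for $(H_2,J_2)$, and the definition of $H_2^V$ puts us in it on all of $[\rho_0,\infty)\times\partial V\subset\widehat{V}$ for $(H_2^V,J_2^V)$.

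For $(H_2^V,J_2^V)$, a Floer cylinder connecting two orbits in $\{\rho\le\rho_0\}$ cannot venture into $\{\rho>\rho_0\}$, for then $\rho\circ u$ would attain an interior maximum in the contact-type region; hence it is contained in the overlap. For $(H_2,J_2)$ on $\widehat{W}$ the same local argument prevents any trajectory asymptotic to orbits in regions $\rom{1},\rom{2}$ from attaining $\rho$-values inside $(\rho_0,\delta-\rho_0)$ along the linear slab; the remaining possibility — a trajectory that tunnels across into regions $\rom{3},\rom{4},\rom{5}$ — is ruled out by \cite[Lem.\ 2.3]{CO}, whose hypotheses are built into Definition~\ref{def:Hstair} (notably the action ordering $\mc{A}(\rom{4})<\mc{A}(\rom{5})<0<\mc{A}(\rom{1})<\mc{A}(\rom{2})$ and the slope bound $\mu<\beta(e^\delta-1)/e^\delta$). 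Once both sides of \eqref{eqn:cicm} see only Floer cylinders supported in the overlap, they carry the same differential, producing an isomorphism of chain complexes and hence of homology. The second identification in \eqref{eqn:cicm} follows by further quotienting out the subcomplex generated by the constant orbits — region $\rom{1}$ on the left side, and the critical points of $H_2^V$ in $V$ on the right side — which are identified by the same correspondence. The technical heart of the argument, and the main obstacle, is thus the passage past the bend at $\rho=\delta-\rho_0$, where a naive maximum principle breaks down and one must appeal to the integrated maximum principle of \cite[Lem.\ 2.3]{CO}.
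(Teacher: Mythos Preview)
Your argument is correct and is the standard one: the identifications \eqref{eqn:cicm} are in fact isomorphisms of chain complexes, because every index-one Floer cylinder contributing to either differential is confined to the common region $V\cup([0,\rho_0]\times\partial V)$ where the data $(H_2,J_2)$ and $(H_2^V,J_2^V)$ coincide. The confinement on the $\widehat V$ side is the ordinary maximum principle, and on the $\widehat W$ side it is the integrated maximum principle of \cite[Lem.\ 2.3]{CO}; note that this lemma, as stated there, already gives the full confinement (any cylinder whose positive asymptotic lies below the linear slab stays entirely below it), so your separation into a ``local maximum'' case and a ``tunneling'' case is a slight redundancy rather than two genuinely distinct steps.

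The paper itself does not give a proof but simply cites \cite[Prop.\ 4.4]{gutt}, so there is no alternative argument to compare against. Your write-up is essentially a sketch of what that reference does.
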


Given $H_2$ and $J_2$ as above, suppose that $H_1\in\Hstd(W)$ satisfies $H_1\le H_2$ pointwise. Let $J_1$ be an admissible $S^1$-family of almost complex structures on $\widehat{W}$. We then have a well-defined continuation map
\begin{equation}
\label{eqn:transcont}
HF(H_1,J_1) \longrightarrow HF(H_2,J_2)
\end{equation}
defined as in \eqref{eq:floerparam}.

\begin{definition}\label{def:transfermorphism}
We define the transfer morphism on Floer homology to be the composition
\[
\phi_{H_2^V,H_1}: HF(H_1,J_1)\longrightarrow HF(H_2,J_2)\longrightarrow H\bigl(C^{\rom{1},\rom{2}}(H_2,J_2)\bigr)
= HF(H_2^V,J_2^V).
\]
Here the first arrow is the continuation map \eqref{eqn:transcont}, the second map is induced by projection onto the quotient chain complex, and the equality sign on the right is the canonical isomorphism from Proposition~\ref{prop:homCII}. Concretely, this map counts solutions of equation \eqref{eq:floerparam} going from a $1$-periodic orbit of $X_{H_1}$ to a $1$-periodic orbit of $X_{H_2}$ lying in region $\rom{1}$ or $\rom{2}$.

Since the continuation map decreases action, it follows that in the above composition, we can start with the homology of the quotient by $CF^{\le \epsilon}(H_1,J_1)$, to obtain a transfer map on positive Floer homology,
\[
\phi_{H_2^V,H_1}^+:
HF^+(H_1,J_1)\longrightarrow H\left(\frac{CF(H_2,J_2)}{CF^{\le\epsilon}(H_2,J_2)}\right) \longrightarrow H\bigl(C^{\rom{2}}(H_2,J_2)\bigr)
= HF^+(H_2^V,J_2^V).
\]
\end{definition}

The above transfer maps $\phi_{H_2^V,H_1}$ and $\phi^+_{H_2^V,H_1}$ depend only on $H_1$ and $H_2^V$, and more generally commute with continuation maps for increasing $H_1$ and $H_2^V$; see \cite[Prop.\ 4.7]{gutt}. Consequently, we can define a transfer morphism on (positive) symplectic homology by taking direct limits:
\[
\begin{split}
\phi_{V,W} &= \lim_{\substack{\longrightarrow\\H_1,H_2^V}}\phi_{H_2^V,H_1}: SH(W,\lambda_W) \longrightarrow SH(V,\lambda_V),\\
\phi_{V,W}^+ &=  \lim_{\substack{\longrightarrow\\H_1,H_2^V}}\phi_{H_2^V,H_1}^+ : SH^+(W,\lambda_W) \longrightarrow SH^+(V,\lambda_V).
\end{split}
\]

\subsection{Transfer morphisms for (positive) $S^1$-equivariant symplectic homology}
\label{sec:equivtransfer}

Recall that to define (positive) $S^1$-equivariant symplectic homology, we modify the definition of (positive) symplectic homology, by replacing the notion of admissible Hamiltonians $H:S^1\times\widehat{X}\to\R$ in Definition~\ref{def:Hstd} by the notion of admissibile parametrized Hamiltonians $H:S^1\times\widehat{X}\times S^{2N+1}$ in Definition~\ref{def:aph}. In an analogous way, one can modify the definition of admissible stair Hamiltonians $H_2 : S^1\times \widehat{W}\rightarrow\R$ in Definition~\ref{def:Hstair}, to define a notion of ``admissible parametrized stair Hamiltonians'' $H_2:S^1\times\widehat{W}\times S^{2N+1}\to\R$. We can then repeat the constructions in \S\ref{sec:nonequivtransfer} to obtain transfer maps
\begin{align}
\nonumber
\phi_{H_2^V,H_1}^{S^1}: HF^{S^1,N}(H_1) &\longrightarrow HF^{S^1,N}(H_2^V),\\
\label{eqn:phi+}
\phi_{H_2^V,H_1}^{S^1,+}: HF^{S^1,N,+}(H_1) &\longrightarrow HF^{S^1,N,+}(H_2^V).
\end{align}
We can then take the direct limit over $H_1$, $H_2^V$, and $N$ to define transfer morphisms
\[
\begin{split}
\phi_{V,W}^{S^1}: SH^{S^1}(W,\lambda_W) &\longrightarrow SH^{S^1}(V,\lambda_V),\\
\phi_{V,W}^{S^1,+}: SH^{S^1,+}(W,\lambda_W) &\longrightarrow SH^{S^1,+}(V,\lambda_V).
\end{split}
\]

\begin{remark}
\label{rem:simplifyingtransfer}
One can also describe the transfer morphism \eqref{eqn:phi+} for fixed $N$ in the context of Remark~\ref{rmk:simplifyingcomplex} and Proposition~\ref{shortS1+}. Here one starts with an admissible stair Hamiltonian $H_2':S^1\times\widehat{W}\to\R$ and an admissible Hamiltonian $H_1':S^1\times\widehat{X}\to\R$ with $H_1'\le H_2'$. Recall that the homology $HF^{S^1,N,+}(H_1')$ appearing in Proposition~\ref{shortS1+} is the homology of a chain complex generated by symbols $u^k\tensor \gamma$, where $k\in\{0,\ldots,N\}$ and $\gamma$ is a nonconstant $1$-periodic orbit of $X_{H_1'}$. The differential has the form
\[
{\partial}_1^{S^1}(u^k\tensor\gamma) = \sum_{i=0}^ku^{k-i}\tensor\varphi_{1,i}(\gamma).
\]
Likewise, the homology $HF^{S^1,N,+}((H_2')^V)$ is the homology of a chain complex generated by symbols $u^k\tensor \gamma$, where $k\in\{0,\ldots,N\}$ and $\gamma$ is a nonconstant $1$-periodic orbit of $X_{(H_2')^V}$. The differential has the form
\[
{\partial}_2^{S^1}(u^k\tensor\gamma) = \sum_{i=0}^ku^{k-i}\tensor\varphi_{2,i}(\gamma).
\]
We now construct the transfer map \eqref{eqn:phi+} using continuation maps for homotopies which respect the inclusions $\widetilde{\imath}_0$ and $\widetilde{\imath}_1$ as in Remark~\ref{rmk:simplifyingcomplex}. This transfer map will then be induced by a chain map
having the form
\begin{equation}
\label{eqn:transferpsi}
\psi(u^k\tensor\gamma) = \sum_{i=0}^ku^{k-i}\tensor\psi_i(\gamma).
\end{equation}
\end{remark}

\subsection{Transfer morphisms for generalized Liouville embeddings}

We now extend the definition of transfer morphisms for a generalized Liouville embedding $\varphi:(V,\lambda_V) \to (W,\lambda_W)$ with $\varphi(V)\subset\op{int}(W)$. 

\begin{lemma}
\label{lem:extensionlambda}
Let $\varphi:(V,\lambda_V)\hookrightarrow(W,\lambda_W)$ be a generalized Liouville embedding with $\varphi(V)\subset\op{int}(W)$. Then there exists a 1-form $\lambda_W'$ on $W$ such that
\begin{enumerate}
\item $d\lambda_W'=d\lambda_W$,
\item $\lambda_W'=\lambda_W$ near $\partial W$,
\item $\varphi^{\star}\lambda_W'=\lambda_V$.
\end{enumerate}
\end{lemma}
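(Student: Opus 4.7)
The plan is to construct $\lambda_W'$ in the form $\lambda_W - \beta - dF$, where $\beta$ will be a closed 1-form on $W$ compactly supported in $\op{int}(W)$ and $F$ will be a function on $W$ also supported in $\op{int}(W)$. If $\beta$ and $F$ are chosen so that $\varphi^{\star}(\beta + dF) = \varphi^{\star}\lambda_W - \lambda_V$, then all three required properties follow at once: closedness of $\beta$ yields $d\lambda_W' = d\lambda_W$; the interior-support condition yields $\lambda_W' = \lambda_W$ near $\partial W$; and the pullback identity yields $\varphi^{\star}\lambda_W' = \lambda_V$.

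Set $\alpha := \varphi^{\star}\lambda_W - \lambda_V$ on $V$, which is closed because $\varphi$ is symplectic. The technical core of the argument will be to produce $f \in C^{\infty}(V)$ with $df = \alpha$ on a neighborhood of $\partial V$. By the generalized Liouville hypothesis, $\alpha|_{\partial V} = df_0$ for some $f_0 \in C^{\infty}(\partial V)$. On a collar $\mc{N} \cong [0,\epsilon) \times \partial V$, I will decompose $\alpha = a\,dt + \alpha_t$, where $a \in C^{\infty}(\mc{N})$ and $\alpha_t$ is a $t$-dependent 1-form on $\partial V$; the identity $d\alpha = 0$ then forces $\partial_t \alpha_t = d_{\partial V} a$. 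Defining
\[
f(t,y) := f_0(y) + \int_0^t a(s,y)\,ds \qquad \text{on } \mc{N},
\]
the formulas $\alpha_0 = df_0$ and $\partial_t \alpha_t = d_{\partial V} a$ combine to give $df = \alpha$ on $\mc{N}$, and a cutoff in the collar variable extends $f$ to a global smooth function on $V$ with $df = \alpha$ on a smaller collar.

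Given such $f$, the form $\tilde\alpha := \alpha - df$ is closed on $V$ and vanishes near $\partial V$, so pushing it forward by $\varphi$ and extending by zero defines a smooth closed 1-form $\beta$ on $W$ compactly supported in $\op{int}(W)$. To build $F$, I will extend $f \circ \varphi^{-1}$ from $\varphi(V)$ to a smooth function on an open neighborhood of $\varphi(V)$ in $\op{int}(W)$ (via a standard Whitney- or Seeley-type boundary extension), and then multiply by a bump function equal to $1$ on $\varphi(V)$ and compactly supported in $\op{int}(W)$; this is possible because $\varphi(V)$ is a compact subset of $\op{int}(W)$. Setting $\lambda_W' := \lambda_W - \beta - dF$, the pullback computation
\[
\varphi^{\star}\lambda_W' = (\lambda_V + \alpha) - (\alpha - df) - df = \lambda_V
\]
completes the argument. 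I expect the only nontrivial step to be the collar construction of $f$; the remaining extensions by zero and bump-function manipulations are routine.
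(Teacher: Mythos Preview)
Your argument is correct and rests on the same key observation as the paper: the closed $1$-form $\alpha=\varphi^{\star}\lambda_W-\lambda_V$ is exact near $\partial V$ thanks to the generalized Liouville hypothesis, so one can interpolate. The organization differs, however. You work \emph{inside} $V$: you produce $f$ with $df=\alpha$ on an inward collar, split $\alpha$ into a compactly supported closed piece $\tilde\alpha$ and an exact piece $df$, push each forward/extend separately to $W$, and subtract. The paper instead extends the symplectic embedding slightly \emph{outward} to $V_\delta=V\cup([0,\delta]\times\partial V)$, finds a primitive $g$ of $\lambda_W-\widehat{\lambda_V}$ on the external collar $[0,\delta]\times\partial V$, and sets $\lambda_W'$ equal to $\lambda_V$ on $V$, to $\widehat{\lambda_V}+d(\beta g)$ on the collar (with $\beta$ a cutoff in $\rho$), and to $\lambda_W$ outside. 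The paper's version avoids your two separate extension steps (pushing $\tilde\alpha$ forward and Seeley-extending $f$) at the cost of invoking the extension of the symplectic embedding to a collar; both trade-offs are routine, and the underlying mechanism is identical.
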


\begin{proof}
Given $\delta>0$, define
\[
V_\delta = V\cup ([0,\delta]\times\partial V) \subset \widehat{V}.
\]
As in \cite[Thm.\ 3.3.1]{ms3}, if $\delta$ is sufficiently small then we can extend $\varphi$ to a symplectic embedding
\[
\varphi_\delta: (V_\delta,\widehat{\omega_V}) \longrightarrow (W,\omega_W).
\]
Now use the map $\varphi_\delta$ to identify $V_\delta$ with its image in $W$.  Then the $1$-form $\lambda_W - \widehat{\lambda_V}$ is closed on $V_\delta$.

By hypothesis, the de Rham cohomology class of this $1$-form restricted to $[0,\delta]\times \partial V$ is zero. Thus there is a function $g:[0,\delta]\times\partial V$ such that
\[
dg = (\lambda_W - \widehat{\lambda_V})\big|_{[0,\delta]\times\partial V}.
\]
Let $\beta:[0,\delta]\to\R$ be a smooth function with $\beta(\rho)\equiv 0$ for $\rho$ close to $0$ and $\beta(\rho)\equiv 1$ for $\rho$ close to $\delta$. We can then take
\[
\lambda_W' = \left\{\begin{array}{cl} \lambda_V & \mbox{on $V$},\\
\widehat{\lambda_V}+d(\beta g) & \mbox{on $[0,\delta]\times \partial V$},\\
\lambda_W & \mbox{on $W\setminus V_\delta$}.
\end{array}
\right.
\]
\end{proof}

Now given a generalized Liouville embedding as above, let $\lambda_W'$ be a $1$-form on $W$ provided by Lemma~\ref{lem:extensionlambda}. We then have an honest Liouville embedding
\[
\varphi:(V,\lambda_V) \longrightarrow (W,\lambda_W').
\]
As explained in \S\ref{sec:nonequivtransfer} and \S\ref{sec:equivtransfer}, this induces transfer maps
\begin{equation}
\label{eqn:transfer'}
SH^{(S^1,+)}(W,\lambda_W') \longrightarrow SH^{(S^1,+)}(V,\lambda_V).
\end{equation}

The construction in \S\ref{sec:equivariantsh} of (positive, $S^1$-equivariant) symplectic homology of $(W,\lambda_W)$ depends only on the contact form $\lambda_W|_{\partial W}$ on the boundary, and the symplectic form $\omega_W=d\lambda_W$ on the interior. Indeed, replacing the Liouville form $\lambda_W$ by another Liouville form $\lambda_W'$ with the same exterior derivative and restriction to the boundary does not change any of the chain complexes or maps in the definition of (positive, $S^1$-equivariant) symplectic homology\footnote{One might worry that the Hamiltonian action of a noncontractible loop can change if $\lambda_W-\lambda_W'$ is not exact. However for the Hamiltonians that we are using, the only noncontractible $1$-periodic orbits are associated to Reeb orbits and their action does not change.}, since the classes of admissible Hamiltonians used are determined by the restriction to the boundary, and the Hamiltonian vector fields are determined by the symplectic form. (For stronger results on invariance of symplectic homology see \cite[\S4.3]{gutt}.) Thus we have a canonical isomorphism
\begin{equation}
\label{eqn:shci}
SH^{(S^1,+)}(W,\lambda_W) = SH^{(S^1,+)}(W,\lambda_W').
\end{equation}
We can now finally make the following definition:

\begin{definition}
Suppose $\varphi:(V,\lambda_V)\to (W,\lambda_W)$ is a generalized Liouville embedding with $\varphi(V)\subset\op{int}(W)$. Let $\lambda_W'$ be a $1$-form provided by Lemma~\ref{lem:extensionlambda}. Define the {\bf transfer morphism\/}
\begin{equation}
\label{eqn:gletransfer}
\phi_{V,W}^{(S^1,+)}: SH^{(S^1,+)}(W,\lambda_W) \longrightarrow SH^{(S^1,+)}(V,\lambda_V)
\end{equation}
to be the composition of the canonical isomorphism \eqref{eqn:shci} with the map \eqref{eqn:transfer'}.
\end{definition}

The transfer morphism \eqref{eqn:gletransfer} does not depend on the choice of $\lambda_W'$, because the admissible Hamiltonians, chain complexes, and chain maps in the definition of the transfer morphism depend only on the symplectic form on each Liouville domain and the contact form on the boundary of each Liouville domain.

\section{Properties of transfer morphisms}
\label{sec:proofs2}

Let $\varphi:(X,\lambda) \to (X',\lambda')$ be a generalized Liouville embedding with $\varphi(X) \subset \op{int}(X')$. Let
\[
\Phi: CH(X',\lambda') \longrightarrow CH(X,\lambda)
\]
denote the transfer map $\phi_{X,X'}^{S^1,+}$ defined in \S\ref{section:transfer}. We now prove that this map satisfies the properties in Proposition~\ref{prop:transfer}.

\subsection{Action}

The transfer map $\Phi$ is a direct limit over $H_1$, $H_2^X$, and $N$ of continuation maps
\begin{equation}
\label{eqn:tcm1}
HF^{S^1,N,+}(H_1) \longrightarrow HF^{S^1,N,+}(H_2^X)
\end{equation}
where $H_1$ and $H_2^X$ are appropriate parametrized Hamiltonians for $X'$ and $X$ respectively. Since the continuation map \eqref{eqn:tcm1} is induced by a chain map which decreases symplectic action, it is the direct limit over $L$ of maps
\begin{equation}
\label{eqn:tcm2}
HF^{S^1,N,+,\le L}(H_1) \longrightarrow HF^{S^1,N,+,\le L}(H_2^X).
\end{equation}
We now define
\[
\Phi^L: CH^L(X',\lambda') \longrightarrow CH(X,\lambda)
\]
to be the direct limit over $H_1$, $H_2^X$, and $N$ of the maps \eqref{eqn:tcm2}. Here, as in \S\ref{pf:actionfiltration}, we assume without loss of generality that $L\notin\Spec(\partial X',\lambda') \cup \Spec(\partial X,\lambda)$. The required properties \eqref{eqn:filtran} and \eqref{eqn:iLP} follow from Definition~\ref{def:il2l1}.

\subsection{Commutativity with $U$}

We now show that the transfer map $\Phi$ commutes with the $U$ map defined in \S\ref{sec:U}. 

Recall that the map $\Phi$ can be computed as a direct limit of maps \eqref{eqn:transferpsi} from Remark~\ref{rem:simplifyingtransfer}. And recall from \S\ref{sec:U} that in this setup, the $U$ map is the direct limit of chain maps given by ``multiplication by $u^{-1}$''. So it is enough to prove that for each nonnegative integer $N$, we have a commutative diagram of chain maps
\[
\begin{CD}
{CF}^{S^1,N,+}(H_1') @>{\psi}>> {CF}^{S^1,N,+}\big((H_2')^V\big)\\
@V{u^{-1}}VV @VV{u^{-1}}V\\
{CF}^{S^1,N,+}(H_1') @>{\psi}>> {CF}^{S^1,N,+}\big((H_2')^V\big).
\end{CD}
\]
Here the chain complexes depend on $S^{2N+1}$-families of Hamiltonians and almost complex structures as in Remark~\ref{rmk:simplifyingcomplex}, which we are omitting from the notation.

It is enough to check this commutativity on a generator $u^{k}\tensor\gamma$. If $k=0$, then both compositions are zero, since $\psi$ does not increase the exponent of $k$. If $k>0$, then the lower left composition is given by
\[
\psi \big(u^{-1} (u^k\tensor\gamma)\big) = \psi(u^{k-1}\tensor\gamma) = \sum_{i=0}^{k-1} u^{k-1-i}\tensor \psi_i(\gamma),
\]
while the upper right composition is given by
\[
u^{-1}\psi(u^k\tensor\gamma) = u^{-1}\sum_{i=0}^ku^{k-i}\tensor\psi_i(\gamma) = \sum_{i=0}^{k-1}u^{k-i-1}\tensor\psi_i(\gamma).
\]
These are equal, and this completes the proof that $\Phi U = U \Phi$.

To prove that $\Phi^L U^L = U^L\Phi^L$, as before we can assume without loss of generality that $L\notin\Spec(\partial X',\lambda')\cup\Spec(\partial X,\lambda)$. We then repeat the above argument, restricted to orbits with action less than $L$.

\subsection{Commutativity with $\delta$}

To conclude, we now prove the commutativity with $\delta$ in Proposition~\ref{prop:transfer}. Note that a closely related result was proved in \cite[Thm.\ 5.2]{V}, and our proof will use some of the same ideas.

Recall that the $\delta$ map is defined starting from the short exact sequence of chain complexes \eqref{eqn:sescc}. If $H_1$ and $H_2^X$ are Hamiltonians as in the definition of the transfer map in \S\ref{sec:equivtransfer}, then we have a commutative diagram
\[
\begin{CD}
C_{\op{Morse}}(X',H_1)\tensor\Q\{1,u,\ldots,u^N\} @>>> CF^{S^1,N}(H_1,J_1) @>>> CF^{S^1,N,+}(H_1,J_1)\\
 @VVV @VVV @VVV \\
C_{\op{Morse}}(X,H_2^X)\tensor\Q\{1,u,\ldots,u^N\} @>>> CF^{S^1,N}(H_2^X,J_2^X) @>>> CF^{S^1,N,+}(H_2^X,J_2^X).
\end{CD}
\]
Here the rows are from the short exact sequences of chain complexes \eqref{eqn:sescc} for $X'$ and $X$. The center vertical arrow is the continuation chain map which, in the direct limit, gives the transfer morphism $\phi_{X,X'}^{S^1}$. The right vertical arrow is the continuation chain map which, in the direct limit, gives the transfer morphism $\Phi=\phi_{X,X'}^{S^1,+}$. The left vertical arrow is the restriction of the center vertical arrow. As in the proof of \cite[Thm.\ 5.2]{V}, this left arrow simply discards critical points in $X'\setminus X$ (here we are identifying $X$ with its image in $X'$ under the symplectic embedding), and is the Morse continuation map from ${H_1}|_X$ to ${H_2}|_X$.

The above commutative diagram gives rise to a morphism of long exact sequences on homology. One square of this is the commutative diagram
\[
\begin{CD}
HF^{S^1,N,+}(H_1,J_1) @>>> H_*(X',\partial X') \tensor \Q\{1,u,\ldots,u^N\}\\
@V{\phi^{S^1,+}_{H_2^X,H_1}}VV @VV{\rho\tensor 1}V\\
HF^{S^1,N,+}(H_2^X,J_2^X) @>>> H_*(X,\partial X)\tensor\Q\{1,u,\ldots,u^N\}. 
\end{CD}
\] 
Here the horizontal arrows are the connecting homomorphisms which, in the direct limit, give the $\delta$ maps for $X'$ and $X$. Thus taking the direct limit over $N$, $H_1$, and $H_2^X$, we obtain the desired commutative diagram \eqref{eqn:commdelta}.

\bibliographystyle{alpha}
\bibliography{bibliography.bib}

\end{document}